\documentclass[headsepline,paper=A4, 12pt, pagesize, english]{amsart}
\usepackage[a4paper,hscale=0.7,vscale=0.75,centering]{geometry}

\usepackage{amssymb,mathrsfs}
\usepackage{color}
\usepackage{amsmath} 
\usepackage{amsthm}
\usepackage[english]{babel}
\usepackage[T1]{fontenc}
\usepackage[utf8]{inputenc}
\usepackage{hyperref}
\usepackage{graphicx} 

\newcommand{\bs}[1]{\boldsymbol{#1}}

\newtheorem{thm}{Theorem}

\newtheorem{ass}{Assumption}
\newtheorem{cor}{Corollary}
\newtheorem{lem}{Lemma}

\theoremstyle{definition}

\newtheorem{example}{Example}
\newtheorem{rem}{Remark}

\newcommand{\sProd}[2]{\left\langle #1, #2 \right\rangle} 
\newcommand{\norm}[1]{\left\lvert #1 \right\rvert} 
\newcommand{\wDist}{\mathcal W} 
\newcommand{\quadV}[1]{\left[ #1 \right]}
\newcommand{\coV}[2]{\left[ #1, #2 \right]}

\newcommand{\reals}{\mathbb{R}}

\newcommand{\naturals}{\mathbb{N}}

\newcommand{\filt}{\mathcal{F}}

\newcommand{\init}{\mu}
\newcommand{\law}[1]{\operatorname{Law}(#1)}

\newcommand{\generator}{\mathcal{L}}
\renewcommand{\phi}{\varphi}

\title[Sticky couplings of diffusions]
{Sticky couplings of multidimensional diffusions with different drifts}

\date{\today}

\begin{document}

\author{Andreas Eberle}
\thanks{Financial support from the German Science foundation through the {\em Hausdorff Center for Mathematics} is gratefully acknowledged.}
\address{Universit\"at Bonn, Institut f\"ur Angewandte Mathematik,
  Endenicher Allee 60, 53115 Bonn, Germany}
\email{eberle@uni-bonn.de}
\urladdr{wt.iam.uni-bonn.de}

\author{Raphael Zimmer}
\thanks{}
\address{Universit\"at Bonn, Institut f\"ur Angewandte Mathematik,
  Endenicher Allee 60, 53115 Bonn, Germany}
\email{Raphael.Zimmer@uni-bonn.de}
\urladdr{wt.iam.uni-bonn.de}

\subjclass{60J60, 60H10}
\keywords{Diffusion process, reflection coupling, sticky boundary conditions, stochastic stability, perturbations of Markov processes, total variation bounds, McKean-Vlasov.}
\date{}

\begin{abstract}
We present a novel approach of coupling two 
multidimensional and non-degenerate It{\^o} processes $(X_t)$ and $(Y_t)$ which follow dynamics with different drifts.
Our coupling is \emph{sticky} in the sense that there is a stochastic process $(r_t)$, which solves
a one-dimensional stochastic differential equation with a \emph{sticky boundary} behavior at zero, such that
almost surely $\norm{X_t-Y_t}\leq r_t$ for all $t\geq 0$. 
The coupling is constructed as a
weak limit of Markovian couplings. We provide explicit, non-asymptotic and long-time stable bounds for the
probability of the event $\{X_t=Y_t\}$. 
\end{abstract}

\maketitle

\section{Introduction}
Let $(B_t)$ and $(\tilde{B}_t)$ be $d$-dimensional Brownian motions.
We consider two diffusion processes with values in $\reals^d$ which follow dynamics with different drifts, i.e.\ 
 \begin{alignat}{4}\label{eq:DiffusionDifferentDrifts}
 	dX_t \ &= \ b(t,X_t) \ dt \ &+ \  dB_t, &\qquad &X_0 \ &= \ &x,\\
 	dY_t \ & = \ \tilde{b}(t,Y_t)\ dt\  &+ \ d\tilde{B}_t, &\qquad &Y_0 \ &=\  &y. \label{eq:DiffusionDifferentDrifts2}
 \end{alignat}
We assume that the drift coefficients
 $b,\tilde{b}:\reals_+\times \reals^d\rightarrow\reals^d$ are locally Lipschitz. Moreover, we impose assumptions
 which imply that a geometric Lyapunov drift condition holds for \eqref{eq:DiffusionDifferentDrifts} and
 that there is a constant $M>0$ such that uniformly $|b-\tilde{b}|\leq M$.
 \par\smallskip
 Diffusions with different drifts occur in many application areas. For example, one could consider a Langevin diffusion $(X_t)$
 and a perturbation or approximation $(Y_t)$ of the latter. Other natural examples are McKean-Vlasov processes, where the drift coefficients
 depend not only on the current position of the process but also on the corresponding law. A natural question arising
 is how to obtain explicit bounds for the distance of $X_t$ and $Y_t$ in Kantorovich distances, e.g.\ in total variation norm.
 There are a few articles which try to answer this question in a general setting: Using Girsanov's theorem and coupling on the path space,
 the works \cite{MR814659,MR827946,MR1786115} establish bounds on the total variation norm of such diffusions.
 In \cite{MR3522009}
 bounds for the distance between transition probabilities of diffusions with different drifts are derived using analytic arguments,
 see also the related work \cite{2015arXiv150704014M}. The drawback of these approaches is that the derived bounds are typically only 
 useful for small time horizons and are not long-time stable.
 The article \cite{MR3343646} provides
 bounds for the distance between stationary measures of diffusions with different drifts. Coupling methods are used
 in \cite{2016arXiv160501559D} to provide long-time stable bounds on the distance between a Langevin diffusion and its Euler approximation.
 Howitt constructs in \cite{Howitt} a \emph{sticky coupling} of two one-dimensional Brownian motions with different drifts
 using time-change arguments which are restricted to the one-dimensional setting.
 \par\smallskip
 In this article, we discuss a novel approach of constructing couplings $(X_t,Y_t)$ of solutions to
 \eqref{eq:DiffusionDifferentDrifts} and  \eqref{eq:DiffusionDifferentDrifts2} in a multi-dimensional setting.
 Consider for example the case where $\tilde{b}$ differs from $b$ by a non-zero constant $m$, i.e., $\tilde{b}(t,x)=b(t,x)+m$ for
 some $m\in\reals^d$, and let $(X_t)$ and $(Y_t)$ be solutions of \eqref{eq:DiffusionDifferentDrifts} and \eqref{eq:DiffusionDifferentDrifts2}
 respectively. In this case, whenever $X_t$ and $Y_t$ meet, the drift forces the processes
 to immediately move apart from each other. It is clear that, regardless of how the processes are coupled, one cannot hope for the existence of an almost surely finite
 stopping time $T$ such that $P[ X_t=Y_t\  \forall t\geq T]=1$. Nevertheless, we construct a coupling such that for any
 given $t>0$, we have $P[X_t=Y_t]>0$ and the coupling is $\emph{sticky}$ in the sense that there is a 
 continuous 
 semimartingale $(r_t)$ which solves a one-dimensional stochastic differential equation with a \emph{sticky boundary} behavior at zero
 such that almost surely $\norm{X_t-Y_t}\leq r_t$ for all $t\geq 0$. This allows us to establish explicit, non-asymptotic and long-time stable bounds for the
probability of the event $\{X_t=Y_t\}$. The coupling is constructed as a weak limit of
 Markovian couplings. The idea for the coupling is based on \cite{Eberle2015,2016arXiv160606012E} where
 coupling approaches for particle systems and nonlinear McKean-Vlasov processes are discussed, cf.\ Section \ref{secMcKeanVlasov} for a comprehensive comparison. We show that sticky couplings can be applied
effectively to provide total variation bounds between the laws of both linear
and non-linear diffusions with varying drifts.
 \par\smallskip
 \textbf{Outline:} The main results are presented in Section \ref{secMain}. 
 In Section
\ref{sec:Sticky}  we recall results on the existence and uniqueness of one-dimensional SDEs with sticky boundary,
we establish an approximation result for the latter, and we study the long-time behavior of solutions to such equations using
coupling methods. Based on these results, the proof of our main theorem and the construction of the \emph{sticky coupling}
are presented in Section \ref{sec:proofs}.
\begin{figure}
 \centering
 \begin{minipage}[t]{0.9\textwidth}
 \begin{center}
 \includegraphics[width=9cm]{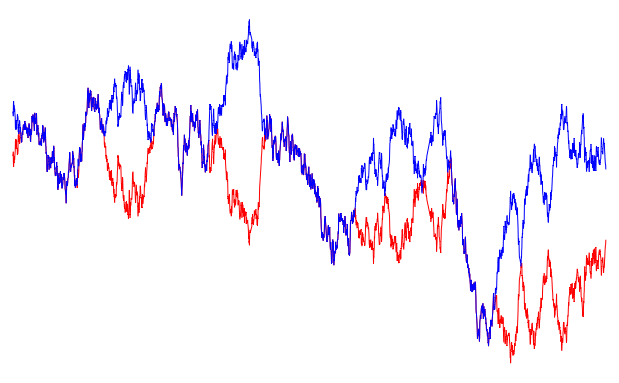} 
 \end{center}
 \caption{\label{fig1} A sticky
coupling of two diffusions on $\mathbb R^1$}
 \end{minipage}
 \end{figure}
\par\smallskip

\section{Main results}\label{secMain}
\subsection{Sticky couplings}
We impose the following assumptions: 
\begin{ass} \label{ass:BoundOnDriftDifference}
	There is a constant $M\in [0,\infty)$ such that 
	\begin{equation*}
		\norm{b(t,x)-\tilde{b}(t,x)}\leq M\qquad\mbox{for any }x\in\reals^d\mbox{ and }t\geq 0.
	\end{equation*}
\end{ass}
\begin{ass}\label{ass:CurvControlForB}
 	There is a Lipschitz function $\kappa:[0,\infty)\rightarrow \reals$ such that
 	\begin{equation*}
 		\sProd{x-y}{b(t,x)-b(t,y)} \leq \kappa(\norm{x-y}) \cdot \norm{x-y}^2 \quad\text{for any $x,y\in\reals^d$ and $t\geq 0$}.
 	\end{equation*}
 	Outside of a bounded interval, the function $\kappa$ is constant and strictly negative.
\end{ass}

The assumptions imply in particular that the unique strong 
solutions $(X_t)$ and $(Y_t)$ of \eqref{eq:DiffusionDifferentDrifts} and \eqref{eq:DiffusionDifferentDrifts2} respectively are non-explosive.
We present our main result:

 \begin{thm}[Sticky coupling]\label{main1}
Suppose that Assumptions \ref{ass:BoundOnDriftDifference} and \ref{ass:CurvControlForB} hold true. Then for any initial values $x,y\in\reals^d$,
 there is a coupling  $(X_t,Y_t)$ of solutions to \eqref{eq:DiffusionDifferentDrifts} and 
  \eqref{eq:DiffusionDifferentDrifts2}, respectively,
 such that $X_t-Y_t$ is \emph{sticky at zero} in the sense that the 
 difference is controlled by a solution of a one-dimensional SDE with a {sticky boundary behavior} at zero.
More precisely, there is a real-valued process $(r_t)$ solving the SDE
\begin{eqnarray}\label{stickyappl}
	dr_t &=& \left(M\ +\ \kappa(r_t)\,r_t\right) \ dt \ + \ 2\, I(r_t>0) \ dW_t, \qquad r_0\  = \ \norm{x-y},
\end{eqnarray}
driven by a one-dimensional Brownian motion $(W_t)$, such that
almost surely,
\begin{eqnarray}\label{monotonappl}
	 \norm{X_t-Y_t}\ \leq\  r_t \quad\mbox{ for any } t\geq 0.
\end{eqnarray}
The process $(r_t)$ is \emph{sticky} at zero in the sense that almost surely, 
\begin{eqnarray} 
	2M\,  \int_0^t I(r_s=0)\  ds  & = & \ell_t^0(r), \qquad 0\ \leq \ t \ < \ \infty,
\end{eqnarray}  
where $\ell_t^0(r)$ is the right local time at $0$ of $(r_t)$, i.e.,
\begin{eqnarray*}
\ell_t^0(r)\ =\ \lim_{\epsilon\downarrow 0}\frac{1}{\epsilon}\int_0^t I(0\le r_s <\epsilon )\, d[r]_s\ =\ 
4 \, \lim_{\epsilon\downarrow 0}\frac{1}{\epsilon}\int_0^t I(0<r_s<\epsilon )\, ds.
\end{eqnarray*}
Equation \eqref{stickyappl} admits an invariant probability
measure $\pi$. For $M=0$, $\pi =\delta_0$, and for $M>0$, $\pi $ is determined by \begin{eqnarray}\label{pimain}
  \pi(dx) &\propto& \left(\frac{2}{ M}\ \delta_0(dx) \ +\ \exp\left(\frac 12\int_0^x(M+\kappa(y)\,y)\, dy\right)\lambda_{(0,\infty)}(dx)\right).
 \end{eqnarray} 
 If the initial conditions coincide, i.e., if $x=y$, then for any $t\geq 0$,
  \begin{eqnarray}\label{t1}
 	\quad P[X_t=Y_t]&\geq & \pi[\{0\}] \ =\ \left( 1+\frac M2\int_0^\infty
 	\exp\left(\frac 12\int_0^x (M+\kappa(y)\,y)\, dy\right)\, dx\right)^{-1} .
 \end{eqnarray}
In general, there are constants $c,\epsilon\in(0,\infty)$, depending only on $M$ and $\kappa$, such that for any $t>0$ and any initial
values $x,y\in\reals^d$,
 \begin{eqnarray}
 	\label{tt2}	P[X_t\not=Y_t]&\leq& \frac{1}{\epsilon} \ \frac{c}{e^{c\,t}-1} \
 	\norm{x-y}\ + \ \pi[(0,\infty)].
 \end{eqnarray}
 The constants $c$ and $\epsilon$ are given by
 \begin{eqnarray*} 	
	c  = \left(2{\int_0^{R_1} \frac{\Phi(s)}{\phi(s)}\ ds}\right)^{-1} \quad\text{and}\quad \epsilon 
	=  \min\left\{\left(2\int_0^{R_1} \frac{1}{\phi(s)}\ ds\right)^{-1},\; c\,\Phi(R_1) \right\},
 \end{eqnarray*}
 where  $\phi(r)= \exp\left(-\frac{1}{2} \int_0^r \left(M\ +\ \kappa(s)\,s\right)^+ ds\right)$, $ \Phi(r) =  \int_0^r \phi(s) \, ds$,
 \begin{eqnarray}
 \label{R0main}		\quad R_0 &=& \inf\{ R\geq 0 : \left(M\ +\ \kappa(r)\,r\right)\leq 0\quad\text{for any } r\geq R \} , \quad\text{and}\\
 	\label{R1main}	\quad R_1 &=& \inf\{ R\geq R_0 : R(R-R_0)\;\left(M/r \  +\ \kappa(r)\right) \leq -4 \quad\text{for any } r\geq R  \}.
 \end{eqnarray}
  \end{thm}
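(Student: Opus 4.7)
The plan is to follow the route sketched in the introduction: construct the sticky coupling as a weak limit of Markovian couplings, exploit the one-dimensional comparison result to control $\norm{X_t-Y_t}$ by a process $(r_t)$ obeying a sticky SDE, and then read off both the invariant-measure characterization and the convergence bounds from the analysis of the one-dimensional sticky SDE recalled in Section~\ref{sec:Sticky}.

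First I would introduce an approximation parameter $\epsilon>0$ and build a genuine Markovian coupling $(X_t^\epsilon, Y_t^\epsilon)$ of \eqref{eq:DiffusionDifferentDrifts} and \eqref{eq:DiffusionDifferentDrifts2} as follows. Let $e_t = (X_t^\epsilon-Y_t^\epsilon)/\norm{X_t^\epsilon-Y_t^\epsilon}$ when $\norm{X_t^\epsilon-Y_t^\epsilon}>0$, and use a reflection coupling $d\tilde B_t = (I-2\,e_te_t^T)\,dB_t$ of the driving Brownian motions outside a small tube around the diagonal, transitioning smoothly (via a cutoff $\chi_\epsilon$) to synchronous coupling inside it. Applied to $\norm{X_t^\epsilon-Y_t^\epsilon}$, It\^o's formula and Assumptions~\ref{ass:BoundOnDriftDifference} and~\ref{ass:CurvControlForB} give a pathwise estimate of the form
\begin{equation*}
d\norm{X_t^\epsilon-Y_t^\epsilon} \ \leq \ \bigl(M+\kappa(\norm{X_t^\epsilon-Y_t^\epsilon})\,\norm{X_t^\epsilon-Y_t^\epsilon}\bigr)\,dt \ + \ 2\,\chi_\epsilon(\norm{X_t^\epsilon-Y_t^\epsilon})\,dW_t^\epsilon
\end{equation*}
with $W_t^\epsilon$ a suitable one-dimensional Brownian motion built from $B,\tilde B$. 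I would then invoke a one-dimensional comparison theorem (à la Yamada--Watanabe/Ikeda--Watanabe) to obtain a dominating process $r_t^\epsilon$ solving an approximate, non-sticky version of \eqref{stickyappl} with diffusion coefficient $2\chi_\epsilon$, started at $\norm{x-y}$, and satisfying $\norm{X_t^\epsilon-Y_t^\epsilon}\leq r_t^\epsilon$ almost surely.

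Next I would pass to the limit $\epsilon\downarrow 0$. Lyapunov control via Assumption~\ref{ass:CurvControlForB} (strict negativity of $\kappa$ outside a bounded set) yields tightness of the laws of $(X_t^\epsilon, Y_t^\epsilon, r_t^\epsilon)$ on path space; the approximation result for sticky SDEs from Section~\ref{sec:Sticky} identifies any weak limit $r_t$ as the unique solution of \eqref{stickyappl}, including the local-time characterization $2M\int_0^t I(r_s=0)\,ds = \ell_t^0(r)$. The pathwise inequality $\norm{X_t-Y_t}\leq r_t$ passes to the limit by continuity. Existence and the explicit form of $\pi$ in \eqref{pimain} follow from solving the stationary equation for the sticky generator: on $(0,\infty)$ the density must satisfy a first-order ODE producing the exponential factor, while the condition $2M\,\pi(\{0\}) = $ (boundary flux) relates the atom at $0$ to the Lebesgue part, giving the ratio $2/M$.

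Finally, for the bounds \eqref{t1} and \eqref{tt2} I would work entirely at the level of \eqref{stickyappl}. Since $\{r_t=0\}\subseteq\{X_t=Y_t\}$, it suffices to show that the one-dimensional law of $r_t$ converges to $\pi$ with the stated rate. For $x=y$ we have $r_0=0$, and stickiness together with the definition of $\pi$ give $P[r_t=0]\geq \pi(\{0\})$ by a direct coupling/stationarity argument on $(r_t)$. For general initial values, I would couple $(r_t)$ with a stationary copy $(\hat r_t)$ driven by the same Brownian motion and analyze $\norm{r_t-\hat r_t}$ via a concave Lyapunov function $f$ built from $\phi$ and $\Phi$; the choices $R_0$ and $R_1$ in \eqref{R0main}--\eqref{R1main} are exactly what is needed so that $(\mathcal{L}f)(r) \leq -c\,f(r)$ outside a compact set, with $f$ bounded below by $\epsilon\cdot r$. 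Gronwall and Markov's inequality then give $P[r_t\neq\hat r_t]\leq \frac{1}{\epsilon}\cdot\frac{c}{e^{ct}-1}\,\norm{x-y}$ (the $c/(e^{ct}-1)$ factor being the standard improvement one obtains from the fact that $\hat r$ is stationary, not just from contractivity), and combining with $P[\hat r_t>0] = \pi((0,\infty))$ yields \eqref{tt2}. I expect the main technical obstacle to be the simultaneous weak convergence of $(X_t^\epsilon,Y_t^\epsilon,r_t^\epsilon)$ while preserving the pathwise domination in the limit, especially near the diagonal where the sticky behavior emerges; the delicate point is ruling out that the dominating process $r_t^\epsilon$ loses mass from the comparison as $\chi_\epsilon$ degenerates at $0$.
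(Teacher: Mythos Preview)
Your construction of the coupling---reflection outside a $\delta$-tube, synchronous inside, It\^o estimate on $|X_t^\delta-Y_t^\delta|$, Ikeda--Watanabe comparison with a dominating one-dimensional process $r_t^\delta$, then tightness and identification of the weak limit via the approximation results of Section~\ref{sec:Sticky}---is exactly what the paper does (your $\chi_\epsilon$ is the paper's $\operatorname{rc}^\delta$). The pathwise inequality and the sticky SDE characterization of $(r_t)$ pass to the limit as you say. For the invariant measure the paper takes a slightly different route, obtaining $\pi$ as a weak limit of the explicit invariant measures of the approximating non-degenerate one-dimensional diffusions (Lemma~\ref{invariantmeasure}), rather than solving the stationary equation directly, but your sketch is fine.

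The genuine gap is in your derivation of \eqref{tt2}. Coupling $(r_t)$ synchronously with a stationary copy $(\hat r_t)$ and applying the concave $f$ to $|r_t-\hat r_t|$ does not give the Lyapunov inequality you claim: when both $r_t>0$ and $\hat r_t>0$ the martingale parts cancel, so the crucial $2f''$ term---the whole point of the concavity---disappears from the generator of the difference. Also, $\epsilon$ is not a lower slope of $f$; it is the \emph{additive} constant in the pointwise bound $2f''(r)+a(r)^+f'(r)\le -\epsilon-c\,f(r)$ for $r>0$. The paper instead applies $f$ to $r_t$ itself (Lemma~\ref{eq:ergodicestimatesuntilT0}): integrating that additive $\epsilon$ against $e^{ct}$ up to $T_0=\inf\{t:r_t=0\}$ yields $P[t<T_0]\le\epsilon^{-1}\tfrac{c}{e^{ct}-1}\,E[f(r_0)]$, which is the first term of \eqref{tt2} since $f(r_0)\le r_0=|x-y|$. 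For the residual $\pi[(0,\infty)]$ term (Theorem~\ref{quantification1}) the paper glues $(r_t)$, after it first meets an independent copy $\tilde r_t$ started at $0$, to $\tilde r_t$; then a monotone synchronous comparison $\tilde r_t\le\tilde s_t$ with a stationary $\tilde s_t$ (this is where Theorem~\ref{thmApproximation} is used) gives $P[\tilde r_t>0]\le\pi[(0,\infty)]$. So the decomposition is ``before first hit of zero / after'', not ``distance to a stationary copy'', and the $c/(e^{ct}-1)$ factor comes from the additive $\epsilon$ in the drift inequality, not from stationarity.
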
 
 
In Section \ref{sec:Sticky} we also provide explicit bounds
 on the expected values $E[\norm{X_t-Y_t}]$, cf. Theorem \ref{quantification1}
 further below.\smallskip
 
The coupling $(X_t,Y_t)$ in Theorem \ref{main1} is constructed as a weak limit of Markovian couplings. The construction of the coupling and the proof of the theorem are given in Section \ref{sec:proofs}. 
  
  \begin{rem}[Reflection coupling]
 The classical reflection coupling of Lindvall and Rogers \cite{MR841588} occurs as a special case of the coupling 
 in Theorem \ref{main1} when the drift coefficients
     coincide, i.e., $b=\tilde{b}$. In this case we can choose $M=0$
     so that $0$ is an absorbing  boundary for the diffusion
     process $(r_t)$. The equation \eqref{t2} reduces to 
   \begin{eqnarray}
 	\label{t2}   
     P[X_t\not=Y_t]&\leq& \frac{1}{\epsilon} \ \frac{c}{e^{c\,t}-1} \ \norm{x-y},
     \end{eqnarray}
    which is a well-known bound for reflection coupling \cite{MR841588, MR972776}.
  \end{rem}
   
In the two special cases $M=0$ and $x=y$, the bound in
\eqref{t2} takes a very simple and intuitive form. In general, however, the rate $c$ depends on $M$. This dependence can be
 avoided by considering a modified coupling.
 
 \begin{thm}\label{thm2}
 	There is a coupling $(\tilde{X}_t,\tilde{Y}_t)$ of solutions to
 	\eqref{eq:DiffusionDifferentDrifts} and
 	\eqref{eq:DiffusionDifferentDrifts2} such that 
 	\begin{eqnarray}
 		\label{TV2} P[\tilde{X}_t\not = \tilde{Y}_t] &\leq&
 		\frac{1}{\tilde{\epsilon}}\, \frac{\tilde{c}}{e^{\tilde{c}t}-1}\, \norm{x-y}
 		\,+\, \pi[(0,\infty)]\quad\mbox{ for any }t\ge 0,
 	\end{eqnarray}
 	where $\tilde{c},\tilde{\epsilon}$ are defined analogously to $c$ and $\epsilon$
 	but with $M=0$.
 \end{thm}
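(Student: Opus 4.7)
Since \eqref{TV2} has exactly the same structure as \eqref{tt2}, with only the rate constants $c,\epsilon$ replaced by their $M=0$ analogues $\tilde c,\tilde\epsilon$ while the invariant-measure contribution $\pi[(0,\infty)]$ is unchanged, my plan is to keep the sticky coupling of Theorem~\ref{main1} (possibly with a minor modification near the boundary) as the coupling $(\tilde X_t,\tilde Y_t)$, and to obtain the improved rate from a sharper analysis of the dominating one-dimensional sticky SDE \eqref{stickyappl}. Once \eqref{monotonappl} has been established, it suffices to show
\begin{equation*}
P[r_t > 0]\;\le\;\frac{1}{\tilde\epsilon}\,\frac{\tilde c}{e^{\tilde c t}-1}\,\norm{x-y}\;+\;\pi[(0,\infty)]
\end{equation*}
for the dominating one-dimensional process $r_t$ with $r_0 = \norm{x-y}$.

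I would prove this by coupling $r_t$ with an auxiliary copy $r_t^*$ of \eqref{stickyappl} on an enlarged probability space. Take $r_0^* = 0$ and couple $(r_t,r_t^*)$ by a reflection coupling of the driving Brownian motions whenever both processes are in $(0,\infty)$, with a compatible behaviour at the sticky boundary. The essential observation is that the drift in \eqref{stickyappl} takes the common form $M+\kappa(\cdot)\,\cdot$ for both copies, so that the $M$-contribution \emph{cancels} in the difference $D_t := r_t - r_t^*$, leaving effective dynamics driven purely by the curvature term $\kappa(r)\,r$, as in the $M=0$ version of the SDE. Applying to $D_t$ the Lyapunov/hitting-time argument that is used in Section~\ref{sec:Sticky} to establish \eqref{tt2}, but with $\phi,\Phi$ replaced by their $M=0$ counterparts $\tilde\phi(r) = \exp(-\tfrac12\int_0^r(\kappa(s)\,s)^+\,ds)$ and $\tilde\Phi(r) = \int_0^r\tilde\phi(s)\,ds$, yields
\begin{equation*}
P[T > t]\;\le\;\frac{1}{\tilde\epsilon}\,\frac{\tilde c}{e^{\tilde c t}-1}\,(r_0-r_0^*)\;=\;\frac{1}{\tilde\epsilon}\,\frac{\tilde c}{e^{\tilde c t}-1}\,\norm{x-y},
\end{equation*}
where $T := \inf\{t\ge 0 : r_t = r_t^*\}$ is the coupling time.

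On the other hand, since $r_t^*$ is started at the minimal state $0$, a separate monotone coupling of $r_t^*$ with an independent stationary version of \eqref{stickyappl} shows that the law of $r_t^*$ is stochastically dominated by $\pi$ at every fixed time, so that $P[r_t^* > 0]\le\pi[(0,\infty)]$. Combined with the inclusion $\{r_t > 0\}\subseteq\{T > t\}\cup\{r_t^* > 0\}$ (valid since the coupling is arranged so that $r_t = r_t^*$ on $\{T\le t\}$) this gives the desired one-dimensional bound and hence \eqref{TV2}.

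\textbf{Main obstacle.} The delicate step is the construction and analysis of the joint process $(r_t,r_t^*)$: the reflection coupling of the driving noise, needed off the boundary to make the common $M$-drift cancel in $D_t$, must be combined with a compatible coupling near the sticky boundary that simultaneously (i) preserves the monotonicity used to derive $P[r_t^*>0]\le\pi[(0,\infty)]$ and (ii) does not reintroduce $M$-dependent local-time contributions into the effective drift of $D_t$, so that the Lyapunov analysis with $\tilde\phi,\tilde\Phi$ remains valid. Justifying this rigorously will rely on the approximation and uniqueness results for one-dimensional sticky SDEs developed in Section~\ref{sec:Sticky}.
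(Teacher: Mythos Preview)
Your approach differs substantially from the paper's, and it has a genuine gap beyond the boundary issue you already flag.

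The paper's proof is a three-line argument that never reopens the one-dimensional analysis. It introduces an auxiliary diffusion $(Z_t)$ with the \emph{same} drift $b$ as $(X_t)$ but started at $Z_0=y$, and then glues (via the gluing lemma) two direct applications of Theorem~\ref{main1}: a reflection coupling of $(X_t,Z_t)$ (identical drifts, so $M=0$, giving the decay term with the constants $\tilde c,\tilde\epsilon$ and $\pi=\delta_0$), and a sticky coupling of $(Z_t,Y_t)$ (identical initial conditions, so by \eqref{t1} the uncoupled probability is at most $\pi[(0,\infty)]$). The union bound $P[\tilde X_t\neq\tilde Y_t]\le P[\tilde X_t\neq\tilde Z_t]+P[\tilde Z_t\neq\tilde Y_t]$ finishes the proof.

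Your plan instead reflection-couples two copies $r_t,r_t^*$ of the one-dimensional sticky SDE \eqref{stickyappl} and applies the Lyapunov machinery to $D_t=r_t-r_t^*$. Under reflection of the driving noise, $D_t$ has drift $\kappa(r_t)r_t-\kappa(r_t^*)r_t^*$, \emph{not} $\kappa(D_t)D_t$. For the Lyapunov argument built from $\tilde\phi(r)=\exp\bigl(-\tfrac12\int_0^r(\kappa(s)s)^+\,ds\bigr)$ to deliver precisely the constants $\tilde c,\tilde\epsilon$ of the statement, you would need
\[
\kappa(r)\,r-\kappa(r^*)\,r^*\ \le\ \kappa(r-r^*)\,(r-r^*)\qquad\text{for all }r\ge r^*\ge 0,
\]
i.e.\ subadditivity of $r\mapsto\kappa(r)r$. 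Assumption~\ref{ass:CurvControlForB} only asks that $\kappa$ be Lipschitz and eventually constant negative; it does not force $r\mapsto\kappa(r)r$ to be concave or subadditive, and admissible $\kappa$ violating the displayed inequality are easy to produce (let $\kappa$ be negative near $0$, rise to a large positive value on a bounded interval, then return to a negative constant). For such $\kappa$ the drift of $D_t$ can exceed $(\kappa(D_t)D_t)^+$ by an arbitrary amount, so the key inequality \eqref{eqf} with $\tilde\phi,\tilde\Phi$ fails for $D_t$, and your argument would at best yield rates governed by the curvature of the one-dimensional map $r\mapsto\kappa(r)r$, not by $\kappa$ itself. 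Thus the step ``Applying to $D_t$ the Lyapunov/hitting-time argument \ldots\ with $\phi,\Phi$ replaced by their $M=0$ counterparts yields \ldots'' is not justified in general. The intermediate-process decomposition of the paper sidesteps this entirely by pushing the $M=0$ analysis back to the multidimensional level, where Assumption~\ref{ass:CurvControlForB} applies directly.
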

 
 \begin{proof}[Proof of Theorem \ref{thm2}]
 	Consider a process $(Z_t)$ satisfying
 	\begin{eqnarray*}
 		dZ_t &=& b(t,Z_t)\, dt \, + \, dB_t, \qquad Z_0=y.
 	\end{eqnarray*}
 	Let $(\tilde{X_t},\tilde{Z}_t)$ be a standard reflection coupling of $(X_t)$
 	and $(Z_t)$, i.e., a sticky coupling in the case where the drifts coincide.
 	Then we can glue this coupling with a sticky coupling of $(Z_t)$ and $(Y_t)$,
 	i.e., there are processes $(\tilde{X_t},\tilde{Z_t},\tilde{Y_t})$ defined
 	on a joint probability space such that $(\tilde{X}_t,\tilde{Z}_t)$ is a sticky coupling of
 	$(X_t,Z_t)$, and $(\tilde{Z}_t,\tilde{Y}_t)$ is a sticky coupling of
 	$(Z_t,Y_t)$, see e.g.\ the ``glueing lemma'' in \cite{MR2459454}. 	For $t\ge 0$, we obtain
 	by Theorem \ref{main1}:
$$
 		P[\tilde{X}_t\not=\tilde{Y}_t] \ \leq \ P[\tilde{X}_t\not = \tilde{Z}_t] \, +
 		\, P[\tilde{Z}_t\not = \tilde{Y}_t]
 		\ \leq \ \frac{1}{\tilde{\epsilon}}\, \frac{\tilde{c}}{e^{\tilde{c}t}-1}\, \norm{x-y}
 		\,+\, \pi[(0,\infty)].
$$
 \end{proof}

To make the bounds in the theorems more explicit, we now assume that we are given constants $\mathcal R,L\in [0,\infty )$ and $K\in (0,\infty )$ such that
for any $t\geq 0$,
 \begin{equation}\label{KLRbound}
 		\sProd{x-y}{b(t,x)-b(t,y)}\ \leq\ 
\begin{cases} 		
 		L \norm{x-y}^2 &\ \text{for any $x,y\in\reals^d$},\\
 		-K \norm{x-y}^2 &\ \text{for $x,y\in\reals^d$ s.t.\ $|x-y|\ge\mathcal R$}.
 		\end{cases}
 	\end{equation}
Hence Assumption \ref{ass:CurvControlForB} is satisfied with $\kappa (r)=L
\, I(r<\mathcal R)-K\, I(r\ge \mathcal R)$.
In this case, the exponential decay rate $\tilde c$
in Theorem \ref{thm2} is bounded from below by 
\begin{equation*}
\tilde c^{-1}\ \le\ \begin{cases}
4\,\max (\mathcal R^2,\, K^{-1}) &\mbox{if }L=0,\\
3e\, \max (\mathcal R^2,\, 4K^{-1})&\mbox{if }L\mathcal R^2\le 4,\\
8\sqrt\pi L^{-1/2}(L^{-1}+K^{-1})\mathcal R^{-1}\exp\left({L\mathcal R^2}/{4}\right)+16K^{-2}\mathcal R^{-2}&\mbox{if }L\mathcal R^2> 4,
\end{cases}
\end{equation*}
see Lemma 1 in \cite{Eberle2015} (Note that the definitions of the function
$\kappa$ and the constant $c$ in \cite{Eberle2015} differ from the definitions above by a factor $-2$, $2$, respectively). The following lemma provides explicit 
upper bounds on the long-time asymptotics of the probabilities in
\eqref{t2} and \eqref{TV2}. The proof is included in Section \ref{sec:proofs}.

\begin{lem}\label{lem:example} Suppose that Condition \eqref{KLRbound} is
satisfied.
Then $\pi [(0,\infty )]=\alpha /(1+\alpha )$ where $\alpha$ is a non-negative constant such that for $M\le K\mathcal R$,
$$
 \alpha \ \le \ \left(\pi^{1/2}e^{1/2}K^{-1/2}+2\mathcal R
\max (4,L\mathcal R^2+2M\mathcal R)^{-1}\right)\, M\, \exp \left(M\mathcal R/2+L\mathcal R^2/4 \right),$$
and for $M\ge K\mathcal R$,
\begin{eqnarray*}
	\alpha\leq
 		\left(\sqrt{\frac{\pi}{K}}+\frac{2\mathcal{R}}{\max(4,2M\mathcal{R}+L\mathcal
 		R^2)}\right)M \exp\left(\frac{M^2}{4K}+\frac{L+K}{4}\mathcal R^2\right).
\end{eqnarray*}
\end{lem}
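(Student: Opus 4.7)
The plan is to reduce the bound to an elementary estimate of the normalising constant in formula \eqref{pimain}. Setting $F(x)=\tfrac12\int_0^x(M+\kappa(y)y)\,dy$, the total mass of the unnormalised measure in \eqref{pimain} equals $2/M+Z$ with $Z=\int_0^\infty e^{F(x)}\,dx$, so $\pi[(0,\infty)]=\alpha/(1+\alpha)$ with $\alpha=MZ/2$, and the entire task reduces to bounding $\alpha$.

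For $\kappa(y)=L\cdot I(y<\mathcal R)-K\cdot I(y\ge\mathcal R)$ one computes directly
\[
F(x)\ =\ \begin{cases}Mx/2+Lx^2/4, & 0\le x\le\mathcal R,\\ Mx/2+(L+K)\mathcal R^2/4-Kx^2/4, & x\ge \mathcal R,\end{cases}
\]
so the split $Z=Z_1+Z_2$ at $\mathcal R$ is natural. For $Z_1$ I would use that $F|_{[0,\mathcal R]}$ is convex with $F(0)=0$, hence lies below its secant: $F(x)\le (x/\mathcal R)F(\mathcal R)$ on $[0,\mathcal R]$. Integrating this bound and combining it with the trivial estimate $Z_1\le\mathcal R\, e^{F(\mathcal R)}$ yields $Z_1\le 4\mathcal R\,\max(4,\,2M\mathcal R+L\mathcal R^2)^{-1}e^{M\mathcal R/2+L\mathcal R^2/4}$, which produces the second summand of each stated bound after multiplying by $M/2$.

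For $Z_2$, the approach depends on the regime. Completing the square gives
\[
Z_2\ =\ e^{(L+K)\mathcal R^2/4+M^2/(4K)}\int_\mathcal R^\infty e^{-K(x-M/K)^2/4}\,dx,
\]
a Gaussian tail centred at $M/K$. When $M\ge K\mathcal R$ the mean lies to the right of $\mathcal R$, so the integral is bounded by the full Gaussian mass $2\sqrt{\pi/K}$; the $Z_1$ bound is recast in the same exponent using the elementary inequality $M\mathcal R/2\le M^2/(4K)+K\mathcal R^2/4$ (just AM--GM applied to $M/\sqrt K$ and $\mathcal R\sqrt K$). This produces the second bound. When $M\le K\mathcal R$, the two exponents $e^{M\mathcal R/2+L\mathcal R^2/4}$ and $e^{M^2/(4K)+(L+K)\mathcal R^2/4}$ can differ by an arbitrarily large factor $e^{(K\mathcal R-M)^2/(4K)}$, so completing the square is the wrong move; instead I would substitute $y=x-\mathcal R$ directly in $Z_2$, obtaining
\[
Z_2\ =\ e^{M\mathcal R/2+L\mathcal R^2/4}\int_0^\infty e^{(M-K\mathcal R)y/2-Ky^2/4}\,dy,
\]
where now the linear factor $(M-K\mathcal R)y/2\le 0$ only helps, so the remaining integral is at most $\int_0^\infty e^{-Ky^2/4}\,dy=\sqrt{\pi/K}$. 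Summing with the $Z_1$ bound gives the first inequality.

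The only real difficulty is bookkeeping: in each regime one must choose the substitution so that the common exponent ($e^{M\mathcal R/2+L\mathcal R^2/4}$ for $M\le K\mathcal R$, respectively $e^{M^2/(4K)+(L+K)\mathcal R^2/4}$ for $M\ge K\mathcal R$) is preserved throughout, so that $Z_1$ and $Z_2$ can be added without worsening the prefactor. Once the correct case split is made, everything reduces to the convexity inequality above and the half-Gaussian identity $\int_0^\infty e^{-Ky^2/4}dy=\sqrt{\pi/K}$.
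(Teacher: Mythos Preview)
Your argument is correct and in fact slightly sharper and cleaner than the paper's. The overall structure---write $\pi[(0,\infty)]=\alpha/(1+\alpha)$ with $\alpha=\tfrac{M}{2}\int_0^\infty e^{F(x)}\,dx$, split the integral at $\mathcal R$, and combine a bound on $[0,\mathcal R]$ with a Gaussian tail bound on $[\mathcal R,\infty)$---is identical, and the case $M\ge K\mathcal R$ is handled the same way (full Gaussian mass plus AM--GM to unify exponents). Two steps differ.

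For $Z_1$ the paper substitutes $y=\mathcal R-x$ and bounds $e^{-My/2-Ly(2\mathcal R-y)/4}\le e^{-(M/2+L\mathcal R/4)y}$ before integrating; your secant bound from convexity of $F$ is a more transparent way to reach exactly the same inequality $Z_1\le 4\mathcal R\,\max(4,2M\mathcal R+L\mathcal R^2)^{-1}e^{F(\mathcal R)}$.

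For $Z_2$ in the regime $M\le K\mathcal R$ the paper still completes the square and then needs a sub-case split on whether $K(\mathcal R-M/K)^2\lessgtr 2$, invoking Mill's ratio in one sub-case; this produces the constant $\sqrt{\pi e/K}$ appearing in the statement. Your direct substitution $y=x-\mathcal R$ and the observation that the linear coefficient $(M-K\mathcal R)/2$ is nonpositive avoids the sub-case split entirely and gives the smaller constant $\tfrac12\sqrt{\pi/K}$, so the stated bound follows a fortiori. Your route is therefore not only valid but the more economical one here.
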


The theorems imply bounds on the total variation distance between
the laws of $X_t$ and $Y_t$ for any time $t\ge 0$.  We now verify that in 
two simple examples, the
bound in \eqref{TV2} is of the correct order:

\begin{example}[Ornstein-Uhlenbeck processes]\label{exa:ou}
Fix $m\in\reals^d\setminus\{0\}$.  
We consider Orn\-stein-Uhlenbeck processes on $\reals^d$, given by
 \begin{alignat}{4}\label{eq:DiffusionDifferentDrifts3}
 	dX_t \ &= \ -X_t/2 \ dt \ &+ \  dB_t, &\qquad &X_0 \ &= \ &x,\\
 	dY_t \ & = \ -(Y_t-m)/2\ dt\  &+ \ d\tilde{B}_t, &\qquad &Y_0 \ &=\  &y, \label{eq:DiffusionDifferentDrifts4}
 \end{alignat}
 where $(B_t)$ and $(\tilde{B}_t)$ are $d$-dimensional Brownian motions. 
 Let $d(t)$ denote the total variation distance between the laws of $X_t$ and
 $Y_t$ at time $t$. It is well-known that
 $X_t$ and $Y_t$ are normally distributed with 
\begin{eqnarray*}
	\law{X_t} &=& \mathcal{N}\left(e^{-t/2}\ x ,\  (1-e^{-t}) \operatorname{I}_{d}
	\right),\\
	\law{Y_t} &=& \mathcal{N}\left(e^{-t/2}\ y \ + \ (1-e^{-t/2})\ m ,\  (1-e^{-t}) \operatorname{I}_{d} \right).
\end{eqnarray*}  
The total variation distance between $d$-dimensional normal distributions
$\mathcal N(a,b I_d)$ and $\mathcal N(\tilde{a},b I_d)$ with
$a,\tilde{a}\in\reals^d$ and $b\in(0,\infty)$ is given by 
$\Phi_1(\norm{a-\tilde{a}}/(2\sqrt{b}))$ where
\begin{eqnarray*}
	\Phi_1(r) &:=& \sqrt{2/\pi} \int_0^r \exp(-x^2/2)\, dx,
\end{eqnarray*}
cf.\ e.g.\ \cite[Exercise 15.12]{MR2807365}. Hence for any $t>0$,
\begin{eqnarray}
	 d(t)&=& ||\law{X_t}-\law{Y_t}||_{TV} \, = \,
	\Phi_1\left(\frac{\norm{m+e^{-t/2}(y-m-x)}}{2\sqrt{1-e^{-t}}}\right ).\label{TV}
\end{eqnarray}
We now compare the upper bound \eqref{TV2} for the total variation distance that
has been derived by sticky couplings to the exact expression \eqref{TV}. Observe
that Assumptions \ref{ass:BoundOnDriftDifference} and \ref{ass:CurvControlForB}
are satisfied with $M=\norm{m}/2$ and the constant function $\kappa(r)=-1/2$
respectively. By a straightforward computation we obtain
\begin{eqnarray}
	\label{pi} \pi[(0,\infty)] &=& 1 \, - \,
	\left(1\,+\,\sqrt{\pi/8}\norm{m} e^{m^2/8}\,(1+\Phi_1(\norm{m}/2))\right)^{-1}.
\end{eqnarray}
Asymptotically as $t\rightarrow\infty$, the upper bound for
$P[\tilde{X}_t\not= \tilde{Y}_t]$ in \eqref{TV2} approaches \eqref{pi}, 
whereas the total variation distance $d(t)$ converges to $\Phi_1(\norm{m}/2)$.
Comparing both expressions for small and large values of $\norm{m}$, we see
that as $\norm{m}\rightarrow 0$,
\begin{eqnarray*}
\pi[(0,\infty)]\sim \sqrt{\pi/8} \norm{m}, \quad\text{whereas} \quad \Phi_1(\norm{m}/2)\sim
\norm{m}/\sqrt{2\pi},
\end{eqnarray*}
and as $\norm{m}\rightarrow \infty$,
$$
	1-\pi[(0,\infty)] \,\sim\, \frac{2}{\sqrt{2\pi}\norm{m}}e^{-\norm{m}^2/8},
	\quad\text{whereas}\quad 1-\Phi_1(\norm{m}/2)\, \sim \,
	\frac{4}{\sqrt{2\pi}\norm{m}}e^{-\norm{m}^2/2}.
$$
Hence as $m\downarrow 0$, the bounds for the long time limit of the total variation distance provided by sticky couplings are of
the correct order up to a multiplicative constant, whereas for $m\to\infty$, we loose a factor $4$
in the exponential.

Furthermore, we can compare the decay rate $\tilde c$ in 
\eqref{TV2} with the rate of convergence of $d(t)$ to
its limit $\Phi_1(\norm{m}/2)$. 
Asymptotically as $t\uparrow\infty$, \eqref{TV} implies
\begin{eqnarray}\nonumber 
	\norm{d(t)-\Phi_1(\norm{m}/2)}&\sim& \Phi_1'(\norm{m}/2) e^{-t/2}
	\norm{y-m-x}/2 \\ &=& (2\pi)^{-1/2} e^{-m^2/8} e^{-t/2}
	\norm{y-m-x}.\label{TVA}
\end{eqnarray}
On the other hand, in this case $\tilde c=1/8$ and $\tilde\epsilon
=1/(2\sqrt 8)$, so by \eqref{TV2}, 
\begin{eqnarray} \label{TV2A}
	P[\tilde{X}_t\not=\tilde{Y}_t] - \pi[(0,\infty)] &\leq& 2^{-1/2}
	(e^{t/8}-1)^{-1} \norm{x-y}.
\end{eqnarray}
We see that the exponential rate of decay in our bound differs from the optimal
rate only by a factor $4$.
\end{example}

\begin{example}[Confined Brownian motion]\label{exa:BM}
Fix $R,k,m\in(0,\infty)$, and let
$$b(x)=0\ \ \mbox{for }\norm{x}\leq R,\quad\mbox{and}\quad b(x)=-k
 (x-R\; {\rm sgn}( {x}))/2\ \ \mbox{otherwise.}$$ 
 Moreover, let $\tilde b(x)=b(x)+m/2$. In this case, Condition \eqref{KLRbound}
 is satisfied with $L=0$, $K=k/6$ and $\mathcal R = 3R$, and
 Assumption \ref{ass:BoundOnDriftDifference} holds with $M=m/2$.
 Assuming $m\le kR$ and $mR\le 4/3$, Theorem \ref{thm2} and the first bound in Lemma \ref{lem:example}  show that there is a
coupling $(\tilde X_t,\tilde Y_t)$ of the corresponding solutions to \eqref{eq:DiffusionDifferentDrifts} and \eqref{eq:DiffusionDifferentDrifts2} with arbitrary initial values $x$ and $y$ such that 
 	\begin{eqnarray}
 		\label{TV8} 
 		\limsup_{t\to\infty} P[\tilde{X}_t\not = \tilde{Y}_t] 
 		&\leq &
 		\left( \frac{3e}{4}R\, +\, \left({3\pi e^3}/{2}\right)^{1/2}\, k^{-1/2}\right)\, m.
 	\end{eqnarray}
On the other hand, the unique invariant probability measures for
\eqref{eq:DiffusionDifferentDrifts} and \eqref{eq:DiffusionDifferentDrifts2} 
are given explicitly by $\nu (dx)=Z_f^{-1}f(x)\, dx$, $\mu (dx)=Z_g^{-1}g(x)\,dx$, respectively, where $f(x)=\exp (-k \max (|x|-R,0)^2/2)$, 
$g(x)=\exp (mx)f(x)$, $Z_f=\int_{-\infty}^\infty f(x)\,dx$ and $Z_g=\int_{-\infty}^\infty g(x)\,dx$. Noting that $Z_g\ge Z_f$, an explicit computation
yields the lower bounds
$$\|\mu -\nu\|_{TV}\ \ge\ (\exp (-mR)-1+mR)/(mR),$$
and, for $Rk^{1/2}\le 1$,
$$\|\mu -\nu\|_{TV}\ \ge\ \left(1-\exp (-mR+m^2/(2k))+2^{1/2}(\pi k)^{-1/2}m\exp (-mR)\right)/4,$$
see the appendix. In particular,
$$\liminf_{m\downarrow 0}\|\mu -\nu\|_{TV}/m\ \ge\ \frac 14\, \left( R+(2/\pi )^{1/2}k^{-1/2}\right) .$$
Hence for small $m$, the bound in \eqref{TV8} is sharp up to a constant
factor. 
\end{example}

\begin{rem}[Comparison with Girsanov couplings]	
An alternative approach to construct couplings of solutions to 
\eqref{eq:DiffusionDifferentDrifts} and \eqref{eq:DiffusionDifferentDrifts2}
is by Girsanov's Theorem. If the initial conditions $X_0$ and $Y_0$ coincide and $T\in [0,\infty )$ is a fixed constant, then Girsanov's Theorem can be applied to construct a coupling 
$(X_s,Y_s)$ such that with positive probability, $X_s=Y_s$ for all $s\in [0,T]$.  Moreover, explicit bounds on this probability can be derived via
Hellinger integrals \cite{MR814659,MR827946,MR1786115}. 
Notice, however, that the corresponding bounds
typically degenerate rapidly as $T\rightarrow\infty$.
Hence Girsanov's Theorem provides a very strong coupling over short
time intervals, whereas the sticky
couplings introduced above are stable for long times in the sense that $\liminf_{t\rightarrow\infty} P[X_t=Y_t] \geq \pi[\{0\}]>0$.
\end{rem}

 \subsection{McKean-Vlasov processes} \label{secMcKeanVlasov}
 
We consider nonlinear diffusions on $\reals^d$ of type
 \begin{eqnarray}\label{nonlinear} 
 	dX_t &=&\eta(X_t) \ dt \ +\ \tau \int \vartheta(X_t,y)\ \mu^x_t(dy) \ dt \ +\  dB_t,\quad X_0=x, \\
 	\mu_t^x &=&\law{X_t},\nonumber
 \end{eqnarray}
 where $(B_t)$ is a $d$-dimensional Brownian motion and $\tau\in\reals$. The SDE is nonlinear in the sense of McKean,
 i.e., the future
development after time $t$ depends on the current state $X_t$ and on the law of $X_t$, cf.\ e.g.\ \cite{MR1108185,MR1431299}.
Let $\eta:\reals^d\rightarrow\reals^d$ and $\vartheta:\reals^d\times\reals^d\rightarrow\reals^d$ 
 be Lipschitz continuous functions. Then
the equation above admits a unique strong solution, cf.\ \cite[Theorem 2.2]{MR1431299}.
Let us fix initial values $x_0,y_0\in\reals^d$, $x_0\not=y_0$, and consider 
solutions $(X_t)$ and $(Y_t)$ of \eqref{nonlinear} with $X_0=x_0$ and $Y_0=y_0$ respectively. We define drift coefficients
\begin{eqnarray}\label{b1}
	b^{x_0}(t,x) &=&  \eta(x) \ +\  \tau  \int \vartheta(x,y) \ \mu_t^{x_0}(dy),
	\\ b^{y_0}(t,x) &=&  \eta(x) \ +\  \tau  \int \vartheta(x,y) \ \mu_t^{y_0}(dy),\label{b2}
\end{eqnarray}
which are uniformly Lipschitz in $x$ and continuous in $t$. Notice that due to pathwise uniqueness,
$(X_t)$ and $(Y_t)$ are the unique strong solutions to the equations
\begin{alignat}{3}\label{a1}
	dX_t &\ = \  b^{x_0}(t,X_t) \ dt &\ +\  dB_t, &\qquad X_0&\ =\ &x_0,\\
	dY_t &\ = \ b^{y_0}(t,Y_t) \ dt &\ +\  dB_t, &\qquad Y_0&\ =\ &y_0,\label{a2}
\end{alignat}
and hence we can interpret the processes as two diffusions with different drifts.
\begin{ass}\label{ass:CurvControlForEta}
 	There is a Lipschitz function $\kappa:[0,\infty)\rightarrow \reals$ such that
 	\begin{equation*}
 		\sProd{x-y}{\eta (x)-\eta (y)} \leq \kappa(\norm{x-y}) \cdot \norm{x-y}^2 \quad\text{for any $x,y\in\reals^d$ and $t\geq 0$}.
 	\end{equation*}
 	Outside of a bounded interval, the function $\kappa$ is constant and strictly negative.
\end{ass}
Assuming that Assumption \ref{ass:CurvControlForEta} holds, we have shown in 
\cite{2016arXiv160606012E} that there are constants
$A,\lambda ,\tau_0\in(0,\infty)$ such that for $\norm{\tau}\le\tau_0$,
\begin{eqnarray}\label{eqmckeanest}
	\wDist^1(\mu^{x}_t,\mu^{y}_t) &\leq & A \ e^{-\lambda\, t}\   |x-y| \quad\text{for any }t\geq 0 \text{ and }x,y\in\reals^d,
\end{eqnarray}
where $\wDist^1$ denotes the standard $L^1$ Wasserstein distance. 
The proof is based on an application of reflection coupling if $|X_t-Y_t|
\geq\delta $ and synchronous coupling if $|X_t-Y_t|
\leq\delta /2$, where $\delta $ is a small positive constant. In the
intermediate region, a combination of both couplings is applied. The bound in \eqref{eqmckeanest} is obtained when considering the limit of the resulting bounds as $\delta\downarrow 0$. The couplings considered 
in \cite{2016arXiv160606012E} now turn out to be approximations of a sticky 
coupling. By applying directly the sticky coupling and 
using Corollary \ref{thmtimedependent} further below, we can extend
the result in \cite{2016arXiv160606012E} and derive a corresponding
exponential decay in total variation norm:
  
\begin{thm}\label{mainMcKeanVlasov}
Let $\eta$ and $\vartheta$ be Lipschitz and let Assumption \ref{ass:CurvControlForEta} be true.
There is $\tau_0\in(0,\infty)$ such that for any $\norm{\tau}\leq \tau_0$ and any $x,y\in\reals^d$ there
are constants $B,c\in(0,\infty)$ such that,
\begin{eqnarray}
	\|{\mu_t^{x}-\mu_t^{y}}\|_{TV} &\leq& B\ e^{-c\, t} \quad\text{for any } t\geq 0.
\end{eqnarray}
\end{thm}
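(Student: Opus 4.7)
The plan is to view $(X_t)$ and $(Y_t)$ as ordinary diffusions with time-inhomogeneous drifts $b^{x}$ and $b^{y}$ given by \eqref{b1}--\eqref{b2}, apply the time-dependent sticky coupling (Corollary \ref{thmtimedependent}), and upgrade the resulting bound on $P[X_t\neq Y_t]$ to an exponentially decaying total variation estimate by using that the drift gap itself decays exponentially.

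First I would verify the hypotheses of Corollary \ref{thmtimedependent}. Lipschitz continuity of $\vartheta$ in its first argument (uniformly in the second, with constant $L_\vartheta$) and Assumption \ref{ass:CurvControlForEta} yield
$$\sProd{z_1-z_2}{b^{x}(t,z_1)-b^{x}(t,z_2)}\leq (\kappa(\norm{z_1-z_2})+\norm{\tau}L_\vartheta)\,\norm{z_1-z_2}^2,$$
so that for $\norm{\tau}\le \tau_0$ sufficiently small, $\tilde\kappa:=\kappa+\norm{\tau}L_\vartheta$ is still Lipschitz and strictly negative outside a bounded interval; hence Assumption \ref{ass:CurvControlForB} holds for $b^{x}$ (and analogously for $b^{y}$). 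Lipschitz continuity of $\vartheta$ in its second argument, together with Kantorovich--Rubinstein duality and the $\wDist^1$-estimate \eqref{eqmckeanest}, also yields, uniformly in $z\in\reals^d$,
$$\norm{b^{x}(t,z)-b^{y}(t,z)}\leq \norm{\tau}L_\vartheta\,\wDist^1(\mu^{x}_t,\mu^{y}_t)\leq \norm{\tau}L_\vartheta A e^{-\lambda t}\norm{x-y}=:M(t),$$
so the drift gap is finite and decays exponentially.

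Applying Corollary \ref{thmtimedependent} then produces a coupling $(X_t,Y_t)$ together with a non-negative semimartingale $(r_t)$ satisfying $\norm{X_t-Y_t}\le r_t$ almost surely and
$$dr_t=(M(t)+\tilde\kappa(r_t)r_t)\,dt+2\,I(r_t>0)\,dW_t,\qquad r_0=\norm{x-y}.$$
Since $\|\mu^{x}_t-\mu^{y}_t\|_{TV}\le P[X_t\neq Y_t]\le P[r_t>0]$, it suffices to prove that $P[r_t>0]$ decays exponentially in $t$.

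I expect this last step to be the main obstacle: analysing a sticky SDE whose forcing itself vanishes exponentially. My plan is a two-scale argument. Split $[0,t]$ at $s=\gamma t$ for some $\gamma\in(0,1)$ to be optimised. On $[0,s]$, bound $M(\cdot)$ by $M(0)$ and apply the contraction estimate \eqref{tt2} of Theorem \ref{main1} for the frozen parameter $M(0)$ to control $E[r_s]$. On $[s,t]$, the forcing is at most $M(0)e^{-\lambda s}$, and Lemma \ref{lem:example} bounds the corresponding frozen sticky invariant mass of $(0,\infty)$ by $O(M(0)e^{-\lambda s})$; comparing $(r_u)_{u\in[s,t]}$ with the frozen sticky SDE via a Gronwall-type argument that exploits the slow variation of $M$ on $[s,t]$ (whose total variation is itself $O(M(0)e^{-\lambda s})$) then gives $P[r_t>0]=O(e^{-ct})$ for some $c>0$ of order $\min(\lambda,c_1)/2$, where $c_1$ denotes the contraction rate from Theorem \ref{main1} at $M=M(0)$. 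This is the announced exponential total variation bound.
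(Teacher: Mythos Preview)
Your overall strategy matches the paper's: recast the McKean--Vlasov equations as linear diffusions with the time-dependent drifts \eqref{b1}--\eqref{b2}, bound the drift gap by $M(t)=|\tau|L_\vartheta A e^{-\lambda t}|x-y|$ via \eqref{eqmckeanest}, build a sticky coupling dominated by a one-dimensional process $(r_t)$ with drift $\alpha(t,r)=M(t)+\tilde\kappa(r)r$, and finish with a two-time-scale split $s=t/2$ yielding rate $\min(c,\lambda)/2$. Your observation that one should use $\tilde\kappa=\kappa+|\tau|L_\vartheta$ is in fact slightly more careful than the paper, which writes $\kappa$ in \eqref{alphaexplicit}.

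The only substantive comment is that you overcomplicate the final step. The ``Gronwall-type argument exploiting slow variation of $M$'' is not needed: since $\alpha(t,r)$ is non-increasing in $t$, Assumption~\ref{ass:summaryTimeDep} is satisfied and Corollary~\ref{thmtimedependent} applies \emph{directly} to $(r_t)$, giving
\[
P[r_t>0]\ \le\ \frac{1}{\epsilon}\,\frac{c}{e^{c(t-s)}-1}\Bigl(e^{-cs}f(|x-y|)+\textstyle\int f\,d\pi_0\Bigr)\ +\ \pi_s[(0,\infty)]
\]
for any $0\le s<t$. The term $\pi_s[(0,\infty)]$ is controlled immediately from the explicit formula \eqref{eq:pis}: since $\alpha(s,0)=M(s)$, one has $\pi_s[(0,\infty)]\le \tfrac12 M(s)\int_0^\infty\exp(\tfrac12\int_0^x\alpha(s,y)\,dy)\,dx = O(e^{-\lambda s})$. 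Setting $s=t/2$ finishes the proof. You do not need to freeze $M$ separately on each interval, invoke Lemma~\ref{lem:example}, or track the total variation of $M$ on $[s,t]$; the monotonicity comparison is already built into Theorem~\ref{quantification1} and its corollary. (A minor attribution point: the coupling $(X_t,Y_t)$ and the dominating process $(r_t)$ come from rerunning the construction in the proof of Theorem~\ref{main1} with time-dependent $M$; Corollary~\ref{thmtimedependent} is a statement about the one-dimensional $(r_t)$ only.)
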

The proof is given in Section \ref{sec:proofs}.

\subsection{Outlook}

The concept of sticky couplings sheds new light onto several results
that have been previously derived using combinations of reflection and
synchronous couplings. A first example of this type has been given in
Theorem \ref{mainMcKeanVlasov}. Without carrying out details,
we mention three further results that probably can be reinterpreted
in terms of sticky couplings:\smallskip

a) {\em Componentwise reflection couplings for interacting diffusions.}
In \cite{Eberle2015}, Wasserstein bounds for interacting diffusions with 
small interaction term (for example of mean-field-type) have been derived by coupling each component 
independently with a reflection coupling if the distance is greater than a given
constant $\delta >0$, and with a synchronous coupling otherwise.  Instead, one could now directly consider a componentwise sticky coupling. As time
evolves, more and more components in this coupling would get stuck at
nearby positions until, after some finite coupling time, all components
coincide. We expect
that such a coupling could be used to derive total variation bounds similar to those in Theorem \ref{mainMcKeanVlasov} for interacting particle systems.\smallskip

b){\em Couplings for infinite-dimensional diffusions.} In
\cite{2016arXiv160507863Z}, Wasserstein contraction rates have been
derived for a class of diffusions on a Hilbert space with possibly degenerate
noise. Here a reflection coupling has been applied to the projection of the process on a finite dimensional subspace, whereas the remaining (orthogonal) components have been coupled 
synchronously. Again, because of the interaction between the components,
the reflection coupling is switched off when the finite dimensional 
projections of the two copies are close to each other.  Similarly as above,
it should be possible to replace the coupling for the finite dimensional 
projection by a sticky coupling. The resulting infinite dimensional coupling
process would then spend a certain amount of time at states where the
finite dimensional projections of the two copies coincide. 
Under the assumptions made in \cite{2016arXiv160507863Z},  the orthogonal infinite dimensional components would approach each other for large $t$,
and, consequently, the finite dimensional projections would coincide
for an increasing proportion of time.\smallskip   

c) {\em Couplings for Langevin processes.}
In a forthcoming paper, we consider couplings for (kinetic) Langevin
diffusions $(X_t,V_t)_{t\ge 0}$
with state space $\mathbb R^{2d}$ that are given by
stochastic differential equations of type
\begin{eqnarray}
\label{Langevin}dX_t &=&V_t\, dt,\\
\nonumber dV_t&=&-\gamma V_t\, dt\, -\,  u \, \nabla U(X_t)\, dt\,
+\, \sqrt{2\gamma u}\, dB_t.
\end{eqnarray}
Here 
$(B_t)_{t\ge 0}$ is a $d$ dimensional Brownian motion, 
$u$ and $\gamma $ are positive constants, and $U$ is a $C^1$ 
function on $\mathbb R^d$. We apply a reflection coupling that is 
replaced by a synchronous coupling when the values of
$X_t+\gamma^{-1}V_t$ are close to each other for both components.
Again, at least informally, this coupling could be replaced by
a coupling $((X_t,V_t),(X_t^\prime ,V_t^\prime ))$ that is sticky 
when $X_t+\gamma^{-1}V_t=X_t^\prime+\gamma^{-1}V_t^\prime$.
Under the assumptions that we impose on $U$, the coupling would be contractive on the corresponding $3d$ dimensional linear subspace of $\mathbb R^{4d}$, and as time evolves, it would spend
a positive amount of time on this subspace.\medskip

We hope that the potential applications listed above show how
sticky couplings provide a valuable concept for building intuition
about ways to couple diffusion processes in an efficient way. 
Carrying out carefully the ideas described above would go
far beyond the scope of this paper.

\section{Diffusions on $\reals_+$ with a sticky reflecting boundary.}\label{sec:Sticky}

In this section we prove some basic results on diffusions on $\mathbb R_+$ with a sticky boundary at $0$. In particular,
we prove the existence of a synchronous coupling
of two sticky diffusions and a corresponding comparison theorem,
which is then applied to study the long time behavior of the 
processes. At first, we need to adapt some known facts on
existence and uniqueness of weak solutions to our setup.
We consider the stochastic differential equation 
\begin{eqnarray}\label{eq:oneDimSticky}
	d r_t & = & \alpha(t,r_t)\ dt \ + \ 2 \ I{(r_t>0)} \ dW_t, \qquad \law{r_0} \ = \ \init,
\end{eqnarray}
on the positive real line $\reals_+=[0,\infty)$, where $(W_t)$ is a one-dimensional Brownian motion and
$\init$ is a probability measure on $\reals_+$. Below, we will impose conditions on the drift coefficient
$\alpha:\reals_+\times\reals_+\rightarrow \reals$ which imply existence and uniqueness of weak solutions.
In particular, we will assume that $\alpha(t,0)>0$ for any $t\geq 0$. Let us briefly discuss
the consequences of this assumption: Suppose that $(r_t)$ is a solution of
\eqref{eq:oneDimSticky}. An application of the It{\^o}-Tanaka
formula to $f(r_t)$ with the function $f(x)=\max(0,x)$ and a comparison with
\eqref{eq:oneDimSticky} shows that almost surely,
\begin{eqnarray}\label{eq:stickiness}
	 \int_0^t \alpha(s,0)\ I(r_s=0)\  ds  & = &  \frac{1}{2}\ell_t^0(r), \qquad 0\ \leq \ t \ < \ \infty,
\end{eqnarray}
where
$\ell_t^0(r)=\lim_{\epsilon\downarrow 0} \epsilon^{-1} \int_0^t I(0\leq r_s \leq \epsilon)\, d\quadV{r}_s$
is the right local time of $(r_t)$. 
Equation \eqref{eq:stickiness} shows that there is \emph{reflection} at zero. Moreover, for almost all trajectories,
the Lebesgue measure of the set $\{0\leq s\leq t: r_s=0\}$ increases whenever $\ell_t^0(r)$ increases.
In this sense $(r_t)$ is \emph{sticky} at zero.  
\par\smallskip 
Stochastic differential equations with boundary conditions have a long history. The discovery of a sticky boundary behavior for
one-dimensional diffusions seems to go back to Feller \cite{MR0063607,MR0065809}.
A historical overview is given in \cite{peskir2014boundary}. We give references to the most
relevant works for our application and some recent developments. Existence and uniqueness results for multi-dimensional diffusion processes
with various boundary behaviors have been established by Ikeda and Watanabe in \cite{MR0126883,MR0275537,MR0287612}. These are based on
results by Skorokhod and  McKean 
\cite{skorokhod1961stochastic,skorokhod1962stochastic,mckean1963}. Martingale problems with boundary conditions have been investigated
by Stroock and Varadhan \cite{MR0277037}, see also the related work \cite{MR937956}. Non-existence of a strong solution to the SDE for sticky Brownian motion has been established in \cite{MR1639096}. In \cite{MR1478711},
Warren identifies the law of a sticky Brownian motion conditioned on the driving Wiener process, see also the related work \cite{2016arXiv160307456H}. 
A  recent publication on existence and uniqueness, which is also a good
introduction into the topic, is the work by Engelbert and Peskir \cite{MR3271518} and the related work \cite{MR3183576}. First steps towards sticky couplings
in a one-dimensional setting have been made by Howitt in \cite{Howitt} based on time-changes. The recent articles \cite{2014arXiv1412.3975G,2014arXiv1410.6040G} 
use Dirichlet forms to investigate sticky diffusions and provide some ergodicity results. Rácz and Shkolnikov \cite{MR3325271} construct a multi-dimensional
sticky Brownian motion as a limit of exclusion processes, see also \cite{MR1136247} and \cite{MR598937}.

\subsection{Existence, uniqueness and comparison of solutions}
We use the concept of weak solutions.
Let $(\Omega,\mathcal{A},(\filt_t),P)$ be a filtered probability space satisfying
the usual conditions. An $(\filt_t)$ adapted process 
$(r_t,W_t)$ on $(\Omega,\mathcal{A},P)$ is called a \emph{weak solution} of \eqref{eq:oneDimSticky} if
$P\circ{r_0^{-1}} = \init$, $(W_t)$ is a one-dimensional $(\filt_t)$-Brownian motion w.r.t.\ $P$, and
$(r_t)$ is continuous, non-negative, and $P$-almost surely,
  \begin{equation*}
  r_t-r_0 \ = \  \int_0^t \alpha(s,r_s)\ ds \ + \ \int_0^t 2\ I(r_s>0) \  dW_s, \qquad 0 \ \leq \  t \ < \  \infty.
  \end{equation*}
 We will make the following assumptions on the drift coefficient:
\begin{ass}\label{ass:positiveAtZero}
For any $R>0$, $\inf_{t\in[0,R]} \alpha(t,0) \ >\  0$.
\end{ass}
\begin{ass}\label{ass:stickySdeDriftLocallyLipschitzAndPositive}
For any $R>0$ there is $L_R\in(0,\infty)$ such that
	\begin{eqnarray*}
		 \norm{\alpha(t,x)-\alpha(s,y)} & \leq & L_R \ \left(\ \norm{t-s} \ +\ \norm{x-y}  \ \right) \quad \text{for any } x,y,s,t\in[0,R].
	\end{eqnarray*}
\end{ass}
\begin{ass}\label{ass:stickySdeNonExplosiveCondition}
There is $C\in(0,\infty)$ such that {for any }$x\in\reals_+$,
\begin{eqnarray*}
	\sup\nolimits_{t\in[0,\infty)} \alpha(t,x) &\leq& C \ (\ 1\ +\ \norm{x}\ ) \quad
\end{eqnarray*}
\end{ass}
The assumptions above imply existence and uniqueness in law of weak solutions
to \eqref{eq:oneDimSticky}. This has been proven by Watanabe in \cite{MR0275537,MR0287612}
assuming that the maps $(t,x)\mapsto \alpha(t,x)$ and $t\mapsto 1/\alpha(t,0)$
are bounded and Lipschitz. Using localization techniques
for martingale problems, following the work
of Stroock and Varadhan \cite{MR2190038}, Watanabe's results can be transferred to our slightly more general setup:

\subsubsection{Uniqueness in law}\label{sec:uniqueness}
Let $\mathbb{W}=C(\reals_+,\reals)$ be the space of continuous functions endowed with the topology of
uniform convergence on compacts, and let $\mathcal{B}(\mathbb{W})$ denote the Borel $\sigma$-Algebra. Let $\filt_t=\sigma(\boldsymbol{r}_s: 0\leq s\leq t)$
be the natural filtration generated by the canonical process
$\boldsymbol{r}_t(\omega )=\omega (t)$.
Given a solution $(r_t)$ of \eqref{eq:oneDimSticky},
defined on a probability space $(\Omega,\mathcal{A},P)$,
we write $\mathbb{P}=P\circ r^{-1}$ for the law of $r$ on $(\mathbb{W},\mathcal{B}({\mathbb{W}}))$.
We say that solutions to \eqref{eq:oneDimSticky} are \emph{unique in law}, if
any two solutions $(r_t^1)$ and $(r_t^2)$ with coinciding initial law
 have the same law on the space $(\mathbb{W},\mathcal{B}({\mathbb{W}}))$.
\par\smallskip
In order to apply existing localization techniques for martingale problems, we
interpret equation \eqref{eq:oneDimSticky} as an equation on $\reals$, instead of $\reals_+$, setting
$\alpha(t,x)=\alpha(t,0)$ for $x<0$. This does not cause any problems since, under the assumptions imposed above, any solution $(r_t)$
with initial law supported on $\reals_+$ satisfies almost surely $r_t\geq 0$ for all $t\geq 0$, see e.g.\ the argument in \cite[Proof of Theorem 5]{MR3271518}.
\par\smallskip
We follow \cite{MR2190038,MR1121940}  and define a family of second order differential operators
\begin{eqnarray*}
	(\mathcal{L}_t f)(x) &=& \alpha(t,x) \  f'(x)\ + \ ({1}/{2}) \ I(x>0) \  f''(x).
\end{eqnarray*}
A probability measure $\mathbb{P}$ on $(\mathbb{W},\mathcal{B}({\mathbb{W}}))$ is called a \emph{solution to the martingale problem
w.r.t.\ $(\generator_t)$} iff for any $f\in C_0^2(\reals)$,
\begin{eqnarray*}
 M_t^f & = & f(\boldsymbol{r}_t) -f(\boldsymbol{r}_0) - \int_0^t (\mathcal{L}_{u} f)(\boldsymbol{r}_u) \ du
\end{eqnarray*}
is a continuous $(\filt_t)$-martingale under $\mathbb{P}$. The solution to the martingale problem is
called \emph{unique}, if any two solutions $\mathbb{P}^1$
and $\mathbb{P}^2$ coincide whenever $\mathbb{P}^1\circ \boldsymbol{r}_0^{-1} =\mathbb{P}^2\circ \boldsymbol{r}_0^{-1}$. The next two results are
well-known:

\begin{lem}\cite{MR2190038,MR1121940}\label{lem:MartingaleProblem}
	The following statements are
	 equivalent:
	\begin{itemize}
	  \item[(i)] There is a weak solution of \eqref{eq:oneDimSticky} with initial distribution $\mu$.
	  \item[(ii)] There is a solution $\mathbb{P}$ to the martingale problem w.r.t.\ $(\generator_t)$ s.t.\
	  $\mathbb{P}\circ \boldsymbol{r}_0^{-1}=\mu$.
	\end{itemize}
	Moreover, the uniqueness of solutions to the martingale problem w.r.t.\ $(\mathcal{L}_{t})$ and
	the uniqueness in law of weak solutions to \eqref{eq:oneDimSticky} are equivalent.
\end{lem}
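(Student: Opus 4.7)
The statement is the classical Stroock--Varadhan equivalence adapted to our degenerate diffusion coefficient $\sigma(t,x)=2\,I(x>0)$, so the plan is to reduce everything to the standard theory after carefully handling the vanishing of $\sigma$ at the boundary. First, as remarked in the paragraph preceding the lemma, I would extend the drift by $\alpha(t,x):=\alpha(t,0)$ for $x<0$ so that both the SDE and the martingale problem can be posed on all of $\reals$. Assumption \ref{ass:positiveAtZero} then guarantees that any weak solution with $\law{r_0}$ supported in $\reals_+$ remains nonnegative, so the reformulated problem on $\reals$ has the same solution set (and the same laws on $\mathbb W$) as the original one on $\reals_+$; this is the point where the Engelbert--Peskir argument cited after Assumption \ref{ass:stickySdeNonExplosiveCondition} is invoked.

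For the implication (i)$\Rightarrow$(ii), given a weak solution $(r_t,W_t)$ one applies It\^o's formula to $f(r_t)$ for $f\in C_0^2(\reals)$. Since $d[r]_t=4\,I(r_t>0)\,dt$, the bounded variation part is $\int_0^t(\mathcal L_u f)(r_u)\,du$ (up to the factor convention in the definition of $\mathcal L_t$), and the remaining term $\int_0^t 2f'(r_s)I(r_s>0)\,dW_s$ is a continuous $(\filt_t)$-martingale because $f'$ is bounded. Hence $M^f_t$ is a martingale under $\mathbb P=P\circ r^{-1}$, giving (ii).

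The converse (ii)$\Rightarrow$(i) is the standard martingale representation. Testing the martingale property against $f(x)=x$ (first cut off to get $C_0^2$ functions and pass to the limit using Assumption \ref{ass:stickySdeNonExplosiveCondition}) identifies the drift part, showing that
\[
N_t \ := \ \boldsymbol r_t - \boldsymbol r_0 - \int_0^t \alpha(u,\boldsymbol r_u)\,du
\]
is a continuous local martingale under $\mathbb P$; testing against $f(x)=x^2$ then gives the quadratic variation $[N]_t=4\int_0^t I(\boldsymbol r_s>0)\,ds$. Because $\sigma$ vanishes on $\{\boldsymbol r_s=0\}$, the driving Brownian motion cannot be recovered from $N$ alone, and this is the main obstacle: one enlarges the probability space by an independent Brownian motion $(B_t)$ and sets
\[
W_t \ := \ \int_0^t \tfrac12\,I(\boldsymbol r_s>0)\,dN_s \ + \ \int_0^t I(\boldsymbol r_s=0)\,dB_s,
\]
which is an $(\filt_t)$-Brownian motion by L\'evy's characterization (its quadratic variation is $\int_0^t I(\boldsymbol r_s>0)\,ds+\int_0^t I(\boldsymbol r_s=0)\,ds=t$). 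A direct computation then shows $\int_0^t 2I(\boldsymbol r_s>0)\,dW_s=N_t$, so $(\boldsymbol r_t,W_t)$ is a weak solution with the prescribed initial distribution. Under Assumption \ref{ass:stickySdeLocallyLipschitzAndPositive} the coefficients satisfy only local bounds, so I would first execute this construction up to the stopping times $\tau_R=\inf\{t:\boldsymbol r_t\ge R\}$ and patch by the localization arguments of Stroock--Varadhan \cite{MR2190038}.

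For the final equivalence of the two uniqueness statements, both directions of the SDE--martingale-problem correspondence are law-preserving: the map $(r,W)\mapsto P\circ r^{-1}$ in (i)$\Rightarrow$(ii) depends only on $\law{r_0}$ and the coefficients, and the construction in (ii)$\Rightarrow$(i) produces a weak solution whose law on $\mathbb W$ is exactly $\mathbb P$. Consequently, two weak solutions with the same initial law coincide in law on $\mathbb W$ iff the corresponding martingale-problem solutions coincide, which gives the asserted equivalence of uniqueness in law and uniqueness of the martingale problem.
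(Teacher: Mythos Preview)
Your argument is essentially correct and follows the standard Stroock--Varadhan route. Note, however, that the paper does \emph{not} give its own proof of this lemma: it is stated as a citation result and the authors simply write that ``a detailed proof of Lemma~\ref{lem:MartingaleProblem} can be found in \cite[Chapter~5, Section~4.B]{MR1121940}.'' So there is no ``paper's proof'' to compare against beyond the reference; what you have written is precisely the content of that reference specialized to the present coefficients, including the enlargement-of-filtration step needed because $\sigma$ vanishes on $\{\boldsymbol r_s=0\}$.

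Two small points. First, your parenthetical ``up to the factor convention in the definition of $\mathcal L_t$'' is well placed: as written in the paper the second-order term of $\mathcal L_t$ carries the coefficient $1/2$, whereas the SDE has diffusion coefficient $2\,I(x>0)$, so the correct second-order coefficient should be $2$; your It\^o computation and the identification $[N]_t=4\int_0^t I(\boldsymbol r_s>0)\,ds$ are consistent with the SDE, not with the stated $\mathcal L_t$. Second, your reference \verb|\ref{ass:stickySdeLocallyLipschitzAndPositive}| is a typo for Assumption~\ref{ass:stickySdeDriftLocallyLipschitzAndPositive}; more importantly, for the bare equivalence in the lemma you do not actually need that assumption or Assumption~\ref{ass:stickySdeNonExplosiveCondition}: the passage from $C_0^2$ test functions to the identification of $N$ and $[N]$ goes through purely by stopping at $\tau_R$, as you note at the end, so you can drop the appeal to the linear-growth bound when cutting off $f(x)=x$ and $f(x)=x^2$.
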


\begin{lem}\cite{MR0275537,MR0287612}\label{existenceWatanabe}
	Assume that the maps $(t,x)\mapsto \alpha(t,x)$ and $t\mapsto 1/\alpha(t,0)$ are bounded and Lipschitz. Then for any
	initial law $\mu$ on $\reals_+$, there is a weak solution
	to \eqref{eq:oneDimSticky} which is unique in law.
\end{lem}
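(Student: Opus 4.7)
The plan is to reduce the claim to Watanabe's results via Lemma \ref{lem:MartingaleProblem} and then sketch how Watanabe constructs the sticky diffusion via a time change from a reflected diffusion. The full details are in \cite{MR0275537,MR0287612}, but the strategy can be outlined as follows.

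First I would use Lemma \ref{lem:MartingaleProblem} to rephrase the goal: it suffices to show that the martingale problem associated with $\generator_t$ admits a unique solution for every initial law $\mu$ on $\reals_+$. Next, I would construct a solution via time change. Under the stated boundedness and Lipschitz hypotheses, the reflected SDE
\begin{equation*}
dY_t \ = \ \alpha(t,Y_t)\, dt \ + \ 2\, dW_t \ + \ dL_t^Y,\qquad Y_0\sim\mu,
\end{equation*}
where $L^Y$ is the local time of $Y$ at $0$, admits a unique (in law) non-negative solution by standard Skorokhod-type results. Now define the additive functional
\begin{equation*}
A_t \ = \ t \ + \ \int_0^t \frac{1}{2\,\alpha(s,0)}\, dL_s^Y,
\end{equation*}
which is continuous, strictly increasing, and finite by the assumed lower bound on $\alpha(\cdot,0)$. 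Its inverse $T_t=A_t^{-1}$ is a continuous time change, and setting $r_t=Y_{T_t}$ yields a continuous non-negative process. A direct calculation using the occupation times formula then shows that $r_t$ spends positive Lebesgue time at $0$ precisely when $L^Y$ increases and satisfies the stickiness relation \eqref{eq:stickiness}, while off the boundary it still obeys the SDE with drift $\alpha$ and diffusion coefficient $2$ (up to an appropriate Brownian motion obtained via Dambis–Dubins–Schwarz from the time-changed martingale part). This produces a weak solution of \eqref{eq:oneDimSticky}.

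For uniqueness in law, I would reverse the construction: given any weak solution $(r_t)$ of \eqref{eq:oneDimSticky}, relation \eqref{eq:stickiness} shows that the functional $B_t=t-\int_0^t(2\alpha(s,0))^{-1}I(r_s=0)\,ds$ is continuous and strictly increasing where the martingale part of $r$ evolves. Time-changing by $B^{-1}$ produces a process with the law of the reflected diffusion $Y$, which is unique in law. Since the time change and its inverse are determined pathwise by the solution, uniqueness for the sticky SDE follows. The main technical obstacle is making this time-change identification completely rigorous—in particular, verifying that the time-changed quadratic variation and drift match those of the target equation on the set $\{r_t>0\}$, and that the local-time identity \eqref{eq:stickiness} is forced by any solution; this is exactly the content of Watanabe's argument, and the boundedness and Lipschitz hypotheses on $\alpha$ and $1/\alpha(\cdot,0)$ are precisely what is needed to ensure that both $A_t$ and $B_t$ are well-defined, non-explosive, and compatible with the standard existence and uniqueness theory for the reflected SDE.
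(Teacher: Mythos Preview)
The paper does not give its own proof of this lemma; immediately after stating it, the authors simply record that ``A proof of Lemma \ref{existenceWatanabe} is given in \cite[Chapter IV, Section 7]{MR1011252}'' (Ikeda--Watanabe). Your outline follows the classical time-change construction underlying that reference, so in spirit your approach and the cited one agree.

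There is, however, a genuine gap in your sketch, specific to the time-\emph{inhomogeneous} setting of the lemma. You run the reflected process $Y$ with drift $\alpha(s,Y_s)$ in its own clock $s$ and then set $r_t=Y_{T_t}$. On $\{r_t>0\}$ the time change satisfies $T_t'=1$, so the drift you produce for $r$ at time $t$ is $\alpha(T_t,r_t)$, not $\alpha(t,r_t)$; since $T_t<t$ as soon as the boundary has been visited, the process you construct does \emph{not} solve \eqref{eq:oneDimSticky}. The same clock mismatch contaminates the uniqueness direction (your $B_t$ shifts the time parameter again rather than restoring it), so the inverse time change does not land on the reflected SDE with the correct drift $\alpha(s,\cdot)$. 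In the cited Ikeda--Watanabe construction this is handled by treating $(t,r_t)$ as a space-time process, equivalently by letting the time change itself enter the drift of the auxiliary reflected equation and closing the resulting fixed-point problem; the Lipschitz and boundedness hypotheses on $(t,x)\mapsto\alpha(t,x)$ and on $t\mapsto 1/\alpha(t,0)$ are exactly what make that iteration converge. As written, your argument is complete only in the time-homogeneous case $\alpha(t,x)=\alpha(x)$.
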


	A detailed proof of Lemma \ref{lem:MartingaleProblem} can be found in \cite[Chapter 5, Section 4.B]{MR1121940}. A proof of Lemma 
	\ref{existenceWatanabe} is given in \cite[Chapter IV, Section
	7]{MR1011252}.

\begin{lem}\label{lem:uniqueness}
	If Assumptions \ref{ass:positiveAtZero} and \ref{ass:stickySdeDriftLocallyLipschitzAndPositive} are satisfied then
	the solution to the martingale problem w.r.t.\ $(\mathcal{L}_{t})$ is unique
	for a given initial law,
 	and thus uniqueness in law holds for solutions to \eqref{eq:oneDimSticky}.
\end{lem}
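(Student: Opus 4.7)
The plan is to reduce the lemma to the uniqueness part of Lemma \ref{existenceWatanabe} via a standard Stroock--Varadhan localization. By Lemma \ref{lem:MartingaleProblem}, uniqueness in law for weak solutions of \eqref{eq:oneDimSticky} is equivalent to uniqueness of the martingale problem for $(\mathcal{L}_t)$, so it suffices to prove the latter.

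For each $R>0$, introduce the truncated coefficient
$$
\alpha^R(t,x) \ := \ \alpha\bigl(t\wedge R,\,(x\vee 0)\wedge R\bigr), \qquad t\ge 0,\ x\in\reals,
$$
where $\alpha$ has been extended to $x<0$ by $\alpha(t,x)=\alpha(t,0)$ as explained in Section \ref{sec:uniqueness}. Assumption \ref{ass:stickySdeDriftLocallyLipschitzAndPositive} implies that $\alpha^R$ is bounded and globally Lipschitz on $\reals_+\times\reals$, while Assumption \ref{ass:positiveAtZero} yields $\alpha^R(t,0)\ge \delta_R:=\inf_{s\in[0,R]}\alpha(s,0)>0$ uniformly in $t$; together these facts also give boundedness and Lipschitz continuity of $t\mapsto 1/\alpha^R(t,0)$. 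Lemma \ref{existenceWatanabe} therefore provides existence and uniqueness in law for the SDE driven by $\alpha^R$, and by Lemma \ref{lem:MartingaleProblem} the martingale problem for the operator $\mathcal{L}_t^R f(x) := \alpha^R(t,x) f'(x) + \tfrac 12 I(x>0) f''(x)$ has a unique solution for every initial law on $\reals_+$.

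Now let $\mathbb{P}^1,\mathbb{P}^2$ be two solutions to the martingale problem for $(\mathcal{L}_t)$ with common initial law $\mu$, and set $\tau_R:=\inf\{t\ge 0:\boldsymbol{r}_t\ge R\ \text{or}\ t\ge R\}$. Since $\mathcal{L}_t$ and $\mathcal{L}_t^R$ coincide on $[0,R)\times[0,R)$, the stopped canonical process $\boldsymbol{r}^{\tau_R}$ solves the $(\mathcal{L}_t^R)$-martingale problem up to time $\tau_R$ under each $\mathbb{P}^i$. Applying the concatenation theorem of Stroock and Varadhan (Theorems 6.1.2--6.1.3 in \cite{MR2190038}), we glue to $\boldsymbol{r}^{\tau_R}$ the unique $(\mathcal{L}^R)$-solution started at $\boldsymbol{r}_{\tau_R}$; the resulting law solves the full $(\mathcal{L}^R)$-martingale problem, hence is independent of $i$, and in particular $\mathbb{P}^1$ and $\mathbb{P}^2$ agree on the stopped $\sigma$-algebra $\filt_{\tau_R}$.

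Finally, since both measures are supported on continuous $\reals_+$-valued paths, $\tau_R(\omega)\uparrow\infty$ pointwise as $R\uparrow\infty$, so $\bigcup_R \filt_{\tau_R}$ generates $\mathcal{B}(\mathbb{W})$ and we conclude $\mathbb{P}^1=\mathbb{P}^2$. The delicate step is the concatenation: one must check that splicing the $\mathcal{L}^R$-solution at the random time $\tau_R$ indeed yields a genuine solution of the $(\mathcal{L}^R)$-martingale problem. This is somewhat subtle because of the degenerate diffusion coefficient $I(x>0)$, but the truncation preserves the boundary structure at $0$, so the standard localization machinery of \cite{MR2190038} applies without modification.
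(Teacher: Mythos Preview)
Your proof is correct and follows essentially the same route as the paper: truncate the coefficient to $\alpha(t\wedge n, x\wedge n)$, invoke Lemma~\ref{existenceWatanabe} for the truncated problem, and then appeal to the Stroock--Varadhan localization. The only difference is that the paper cites \cite[Theorem~10.1.2]{MR2190038} directly for the localization step, whereas you unfold its proof via the concatenation theorems (6.1.2--6.1.3); both are valid and equivalent.
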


\begin{proof}
We set $\alpha_n(s,x)=\alpha(s\wedge n,x\wedge n)$ for $n\in\naturals$.
By the assumptions, the maps $(t,x)\mapsto \alpha_n(t,x)$ and $t\mapsto 1/\alpha_n(t,0)$ are
bounded and Lipschitz continuous.
Hence uniqueness holds for the corresponding martingale problem
for any initial law $\mu$ on $\reals_+$ according to Lemma
\ref{existenceWatanabe} and \ref{lem:MartingaleProblem}.
The uniqueness for the martingale problem w.r.t.\ $(\mathcal{L}_{t})$
for such initial laws can now be shown by a localization argument,
cf.\ \cite[Theorem 10.1.2]{MR2190038}.
\end{proof}

\subsubsection{Approximation, existence and coupling of solutions}\label{sec:stickyApproxAndExist}
We now consider two equations of the form \eqref{eq:oneDimSticky} with 
drift coefficients $\beta$ and $\gamma$ that both satisfy Assumptions
 \ref{ass:positiveAtZero},  \ref{ass:stickySdeDriftLocallyLipschitzAndPositive} and \ref{ass:stickySdeNonExplosiveCondition}. We construct a synchronous 
 coupling of solutions to these equations as a weak limit of solutions to
 approximating equations with locally Lipschitz continuous coefficients.
We introduce the family of
stochastic differential equations, indexed by $n\in\naturals$, given by
\begin{eqnarray}\label{eq:stickApprox1}
	d\tilde{r}^n_t & = & \beta(t,\tilde{r}^n_t) \ dt \ +\ 2\ \vartheta^n(\tilde{r}^n_t) \ d\tilde{W}_t, \qquad \operatorname{Law}(\tilde{r}^n_0,\tilde{s}^n_0)=\tilde{\mu}^n\otimes\tilde{\nu}^n,\\
\nonumber d\tilde{s}^n_t & = & \gamma(t,\tilde{s}^n_t) \ dt \ + \ 2\  \vartheta^n(\tilde{s}^n_t) \ d\tilde{W}_t,
\end{eqnarray}
Here $(\tilde{W}_t)$ is a Brownian motion, and we assume that:
\begin{ass}\label{ass:initialConvergence}
$(\tilde{\mu}^n)$ and $(\tilde{\nu}^n)$ are sequences of probability measures on $\reals_+$ converging weakly towards probability measures $\tilde{\mu}$
and $\tilde{\nu}$, respectively.
\end{ass}
\begin{ass}\label{ass:DiffusionCoefficientApproximation}
	For each $n\in\mathbb{N}$, the function $\vartheta^n:\reals_+\rightarrow[0,1]$ is Lipschitz continuous with
	$\vartheta^n(0)=0$, $\vartheta^n(x)>0$ for $x>0$, and $\vartheta^n(x)=1$ for $x\geq 1/n$.
\end{ass}

\begin{rem}
In \cite{MR3271518}, a sticky Brownian motion $(r_t)$ satisfying
\begin{eqnarray*}
	dr_t &=&  I(r_t\not= 0) \, d\tilde{W}_t, \qquad I(r_t=0)\, \mu \, dt \, = \,
	d\ell_t^0(r), \qquad \mu\in(0,\infty),
\end{eqnarray*}
is approximated  by solutions of equations
	\begin{eqnarray*}
		dr^n_t &=& \left( \sqrt{2\,\mu/n}\ I(\norm{r^n_t}\leq 1/n) \ + \ I(\norm{r^n_t}>1/n)\right)d\tilde{W}_t,
	\end{eqnarray*}
	The approximation is tailored in such a way that it is compliant with
	the time-changes frequently used to show existence and uniqueness of weak solutions to sticky SDEs, see e.g.\ \cite{MR3271518,MR0287612}. Our approximation
	result follows a similar spirit but it does not rely on time changes.
\end{rem}

\begin{lem}\label{lemExistenceApproximationEquation}
	Suppose that $\beta$ and $\gamma$ satisfy Assumptions \ref{ass:positiveAtZero}, \ref{ass:stickySdeDriftLocallyLipschitzAndPositive} and \ref{ass:stickySdeNonExplosiveCondition}.
	Moreover, let Assumptions \ref{ass:initialConvergence} and \ref{ass:DiffusionCoefficientApproximation} be true.
	Then for each $n\in\mathbb{N}$, there is a strong solution
	$(\tilde{r}^n_t,\tilde{s}^n_t)$ of
	Equation \eqref{eq:stickApprox1} with values in $\reals_+^2$. Moreover, uniqueness in law holds.
\end{lem}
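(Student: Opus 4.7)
The plan is to reduce the lemma to the standard existence and uniqueness theory for It\^o SDEs with locally Lipschitz, linear-growth coefficients, and then verify non-negativity separately.

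\textbf{Step 1: Extension and regularity of the coefficients.} Following the convention already used in Section \ref{sec:uniqueness}, I would extend $\beta$ and $\gamma$ to $\reals_+\times\reals$ by setting $\beta(t,x)=\beta(t,0)$ and $\gamma(t,x)=\gamma(t,0)$ for $x<0$, and extend $\vartheta^n$ to $\reals$ by $\vartheta^n(x)=0$ for $x\le 0$. Under Assumption \ref{ass:stickySdeDriftLocallyLipschitzAndPositive} the extensions of $\beta$ and $\gamma$ are locally Lipschitz on $[0,R]\times\reals$, and by construction $\vartheta^n$ is Lipschitz on all of $\reals$. Thus the coefficient vector of the system \eqref{eq:stickApprox1} regarded as an SDE on $\reals^2$ is locally Lipschitz in the spatial variable uniformly in time on compacts, and by Assumption \ref{ass:stickySdeNonExplosiveCondition} together with the boundedness $0\le\vartheta^n\le 1$, it satisfies a linear growth bound.

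\textbf{Step 2: Strong existence, pathwise uniqueness, uniqueness in law.} With the regularity gathered in Step~1, I would apply the classical strong existence and pathwise uniqueness theorem for It\^o SDEs (e.g.\ Karatzas-Shreve, Theorem~5.2.9), which gives a global, pathwise unique strong solution $(\tilde r^n_t,\tilde s^n_t)$ on $\reals^2$ for the initial law $\tilde\mu^n\otimes\tilde\nu^n$, and non-explosion follows from the linear growth bound. Yamada-Watanabe then yields uniqueness in law.

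\textbf{Step 3: The processes stay in $\reals_+$.} The main thing to check is that the extended solution actually takes values in $\reals_+^2$ so that it solves the original equation. I would argue by a pathwise comparison: on $\{(t,\omega):\tilde r^n_t(\omega)\le 0\}$ the diffusion coefficient vanishes (since $\vartheta^n\equiv 0$ on $(-\infty,0]$) and the drift equals $\beta(t,0)$, which is strictly positive by Assumption \ref{ass:positiveAtZero}. Hence, setting $\tau_k=\inf\{t\ge 0:\tilde r^n_t\le -1/k\}$, on $[\tau_k\wedge R,\,\tau_k\wedge R+h]$ the process satisfies $d\tilde r^n_t=\beta(t,0)\,dt$ as long as it remains $\le 0$, and therefore cannot cross from $0$ into negative values; making this rigorous by applying Itô-Tanaka to $(\tilde r^n_t)^-$ shows $E[(\tilde r^n_t)^-]=0$ for all $t$, so $\tilde r^n_t\ge 0$ almost surely. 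The same argument applies to $\tilde s^n_t$.

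\textbf{Main obstacle.} The only mildly delicate point is Step~3, because the diffusion coefficient degenerates exactly where the comparison has to be performed. The Itô-Tanaka calculation on $(\tilde r^n_t)^-$ handles this cleanly: the local time term at $0$ has the right sign, the martingale part vanishes on $\{\tilde r^n\le 0\}$ because $\vartheta^n(\tilde r^n_t)I(\tilde r^n_t\le 0)=0$, and the drift contribution is non-negative since $\beta(t,0)>0$, so Gronwall closes the estimate. Everything else is a direct invocation of standard theory.
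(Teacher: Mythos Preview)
Your approach is essentially identical to the paper's: extend the coefficients to all of $\reals$, invoke standard strong existence/pathwise uniqueness for locally Lipschitz SDEs with linear growth, and then use It\^o--Tanaka on $(\tilde r^n_t)^-$ to verify non-negativity. Steps~1 and~2 are fine.

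There is, however, a genuine gap in Step~3. In the Tanaka formula
\[
(\tilde r^n_t)^- \;=\; (\tilde r^n_0)^- \;-\; \int_0^t I(\tilde r^n_s\le 0)\,d\tilde r^n_s \;+\; \tfrac{1}{2}\,\ell^0_t(\tilde r^n),
\]
the local time term carries a \emph{plus} sign and is non-negative, so it does \emph{not} ``have the right sign'': a priori it could make the negative part grow, and no Gronwall argument will absorb it. The point the paper uses---and which your sketch is missing---is that this local time actually \emph{vanishes}. Indeed,
\[
\ell^0_t(\tilde r^n)\;=\;\lim_{\epsilon\downarrow 0}\epsilon^{-1}\!\int_0^t I(0\le\tilde r^n_s\le\epsilon)\,d[\tilde r^n]_s
\;=\;4\lim_{\epsilon\downarrow 0}\epsilon^{-1}\!\int_0^t I(0\le\tilde r^n_s\le\epsilon)\,\vartheta^n(\tilde r^n_s)^2\,ds,
\]
and since $\vartheta^n$ is Lipschitz with $\vartheta^n(0)=0$, one has $\vartheta^n(y)^2\le C_n\,y^2$ near $0$, so the integrand is $O(\epsilon^2)$ and the limit is zero. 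With $\ell^0_t(\tilde r^n)=0$ in hand, the martingale part vanishes on $\{\tilde r^n\le 0\}$ and the drift contribution $-\!\int_0^t I(\tilde r^n_s\le 0)\,\beta(s,0)\,ds$ is non-positive by Assumption~\ref{ass:positiveAtZero}, giving directly $(\tilde r^n_t)^-\le(\tilde r^n_0)^-=0$; no Gronwall is needed.
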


\begin{proof}
	Fix $n\in\naturals$. For $x<0$ we set $\vartheta^n(x)=0$, $\beta(t,x)=\beta(t,0)$, and $\gamma(t,x)=\gamma(t,0)$.
	Equation \eqref{eq:stickApprox1} is then a standard SDE on $\reals^2$ with locally Lipschitz coefficients.
	Hence there is a strong
	 and pathwise unique solution. 
	 Moreover, Assumption \ref{ass:stickySdeNonExplosiveCondition} implies that the solution
	is non-explosive. Similarly to  \cite[Proof of Theorem 5]{MR3271518}, we can apply the It{\^o}-Tanaka formula to
	 the negative part of $\tilde{r}^n_t$ in order to show that
	the process is non-negative. Indeed,
	\begin{eqnarray*}
		(\tilde{r}_t^{n})^--(\tilde{r}^{n}_0)^- & = & - \int_0^t I(\tilde{r}^n_s\leq 0) \, d\tilde{r}_s^n + \frac{1}{2} \ell_t^{0}(\tilde{r}^n),
	\end{eqnarray*}
	where $\ell_t^0(\tilde{r}^n)$ is the right local time of $(\tilde{r}^n_t)$, i.e., 
$$
		\ell_t^0(\tilde{r}^n) = \lim_{\epsilon\downarrow 0} \epsilon^{-1} \int_0^t I(0\leq \tilde{r}^n_s \leq \epsilon)\, d\quadV{\tilde{r}^n}_s
		 = 4\lim_{\epsilon\downarrow 0} \epsilon^{-1} \int_0^t I(0\leq \tilde{r}^n_s \leq \epsilon)\, \vartheta^n(\tilde r^n_s)^2\, ds .
	$$
Since $\vartheta^n$ is Lipschitz with $\vartheta^n(0)=0$, the local time vanishes. Therefore, and since $\beta(s,0)>0$ for any $s\geq 0$, we have $0\leq (\tilde{r}_t^{n})^-\leq (\tilde{r}_0^{n})^- = 0$. A similar argument can be used
	for $(\tilde{s}_t^n)$.
\end{proof}

For each $n\in\naturals$, there are a probability space $(\Omega^n,\mathcal{A}^n,P^n)$ and random variables
$\tilde{r}^n,\tilde{s}^n:\Omega^n\rightarrow \mathbb{W}$ such that
$(\tilde{r}^n_t,\tilde{s}^n_t)$ is a solution of  \eqref{eq:stickApprox1}. Let $\mathbb{P}^n=P^n\circ(\tilde{r}^n,\tilde{s}^n)^{-1}$ denote the law on
$\mathbb{W}\times \mathbb{W}$.
For $w=(w_1,w_2)\in \mathbb{W}\times \mathbb{W}$,
we define the coordinate mappings $\boldsymbol{r}(w)=w_1$ and $\boldsymbol{s}(w)=w_2$.

\begin{thm}\label{thmApproximation}
Suppose that $\beta$ and $\gamma$ satisfy Assumptions \ref{ass:positiveAtZero},
\ref{ass:stickySdeDriftLocallyLipschitzAndPositive} and \ref{ass:stickySdeNonExplosiveCondition}, and let $\tilde\mu$ and $\tilde\nu$ be probability measures on $\mathbb R_+$. Suppose that the sequences $(\vartheta^n)$,
$(\tilde{\mu}^n)$ and $(\tilde{\nu}^n)$ satisfy Assumptions \ref{ass:initialConvergence} and \ref{ass:DiffusionCoefficientApproximation}.
Then there is a random variable
$(\tilde{r},\tilde{s})$ with values in $\mathbb{W}\times\mathbb{W}$, defined on some probability space  $(\Omega, \mathcal{A}, P)$,
such that $(\tilde{r}_t,\tilde{s}_t)$ is a weak solution of
\begin{eqnarray}\label{eq:thmApproximation}
	d \tilde{r}_t & = & \beta(t,\tilde{r}_t)\ dt \ + 2 \ I{(\tilde{r}_t>0)} \ d\tilde{W}_t, \qquad \law{\tilde{r}_0,\tilde{s}_0} \ = \ \tilde{\mu}\otimes\tilde{\nu},\\
	\nonumber d \tilde{s}_t & = & \gamma(t,\tilde{s}_t)\  dt \   + 2 \ I{(\tilde{s}_t>0)} \ d\tilde{W}_t,
\end{eqnarray}
for some Brownian motion $(\tilde{W}_t)$. Moreover,
there is a subsequence $(n_k)$ such that
$P^{n_k}\circ(\tilde{r}^{n_k},\tilde{s}^{n_k})^{-1}$ converges
weakly towards
$P\circ(\tilde{r},\tilde{s})^{-1}$. If additionally,
\begin{eqnarray}\label{m1}
	\beta(t,x) \ \leq\  \gamma(t,x) \quad &&\text{for any } x,t\in\reals_+ ,\text{ and }\\\label{m2}
	P^{n}[\ \tilde{r}^n_0\leq \tilde{s}^n_0 \ ]\ =\ 1  \quad &&\text{for any } n\in\naturals,
\end{eqnarray}
then
$P[\ \tilde{r}_t \leq \tilde{s}_t \text{ for all }t\geq 0\ ]=1$.
\end{thm}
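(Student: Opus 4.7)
The plan is the classical weak-convergence / martingale-problem scheme combined with a pathwise comparison theorem for the approximations.

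\textbf{Tightness and extraction.} Using Assumption \ref{ass:stickySdeNonExplosiveCondition} together with the uniform bound $\vartheta^n\leq 1$, It\^o's formula applied to $x\mapsto x^2$ followed by Gronwall gives $\sup_n E[\sup_{s\leq T}((\tilde r^n_s)^2+(\tilde s^n_s)^2)]<\infty$ for every $T>0$; combined with the Burkholder--Davis--Gundy inequality this yields uniform moment bounds on increments $E[|\tilde r^n_t-\tilde r^n_u|^{2p}]\leq C_{T,p}(t-u)^p$ for some $p>1$, hence tightness of $(\mathbb P^n)$ on $\mathbb W\times\mathbb W$ by Kolmogorov--Chentsov (the driving Brownian motions $\tilde W^n$ form a tight family automatically). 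Prokhorov plus Skorokhod representation allow me to assume a.s.\ uniform-on-compacts convergence of $(\tilde r^{n_k},\tilde s^{n_k},\tilde W^{n_k})$ to $(\tilde r,\tilde s,\tilde W)$ on a single probability space, with $\tilde W$ a Brownian motion and $\tilde r,\tilde s\geq 0$ preserved by continuity.

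\textbf{Semimartingale decomposition of the limit.} By local Lipschitz continuity of $\beta,\gamma$ and the uniform moment bounds, $\int_0^t\beta(s,\tilde r^{n_k}_s)\,ds\to\int_0^t\beta(s,\tilde r_s)\,ds$ almost surely (similarly for $\gamma$). Writing $\tilde M^n_t:=\tilde r^n_t-\tilde r^n_0-\int_0^t\beta(s,\tilde r^n_s)\,ds=2\int_0^t\vartheta^n(\tilde r^n_s)\,d\tilde W^n_s$, the uniform $L^2$-control makes $\tilde M^{n_k}_t$ converge in $L^2$ to $\tilde M_t:=\tilde r_t-\tilde r_0-\int_0^t\beta(s,\tilde r_s)\,ds$, which is therefore a continuous $(\mathcal F_t)$-martingale; analogously $\tilde N_t$ is associated with $\tilde s_t$.

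\textbf{Main obstacle.} The core difficulty is identifying the covariance structure $[\tilde M]_t=4\int_0^t I(\tilde r_s>0)\,ds$, $[\tilde N]_t=4\int_0^t I(\tilde s_s>0)\,ds$ and $[\tilde M,\tilde N]_t=4\int_0^t I(\tilde r_s>0)I(\tilde s_s>0)\,ds$, starting from the approximate values $4\int_0^t\vartheta^n(\tilde r^n_s)^2\,ds$ and the analogous cross-integral. The trouble is that $\vartheta^n(x)^2\to I(x>0)$ is pointwise but not uniform at $0$, while the sticky limit may have $\lambda(\{s:\tilde r_s=0\})>0$. I would first observe that on the open set $\{s:\tilde r_s>0\}$ uniform convergence forces $\vartheta^{n_k}(\tilde r^{n_k}_s)=1$ for $k$ large, so there $d[\tilde M^{n_k}]_s\to 4\,ds$ by dominated convergence. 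The contribution from a neighborhood of $\{\tilde r_s=0\}$ is controlled by combining the It\^o--Tanaka formula for $(\tilde r^n_t-\epsilon)^+$, the uniform non-negativity $\tilde r^n\geq 0$, and Assumption \ref{ass:positiveAtZero} (providing a strict push away from $0$ and thus a quantitative bound on the occupation time of $[0,\epsilon]$), then letting $\epsilon\downarrow 0$. Once the covariance structure is identified, the two-dimensional analogue of Lemma \ref{lem:MartingaleProblem} (established by the localization argument used for Lemma \ref{lem:uniqueness}) yields a weak solution to \eqref{eq:thmApproximation} on a possibly enlarged probability space, producing the required Brownian motion.

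\textbf{Comparison.} Under \eqref{m1}--\eqref{m2}, for each fixed $n$ the strong solution of Lemma \ref{lemExistenceApproximationEquation} has shared locally Lipschitz diffusion coefficient $2\vartheta^n$, ordered locally Lipschitz drifts $\beta\leq\gamma$, a common driving Brownian motion and ordered initial data, so the standard one-dimensional pathwise comparison theorem (e.g.\ Ikeda--Watanabe) yields $\tilde r^n_t\leq\tilde s^n_t$ for all $t\geq 0$, $P^n$-a.s. The set $\{(w_1,w_2)\in\mathbb W\times\mathbb W:w_1(t)\leq w_2(t)\ \forall t\geq 0\}$ is closed for uniform convergence on compact intervals, so the portmanteau theorem transfers the inequality to the weak limit and the Skorokhod construction delivers $\tilde r_t\leq\tilde s_t$ for all $t\geq 0$, $P$-almost surely.
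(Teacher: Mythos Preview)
Your overall architecture (tightness, identification of the limit via martingale problem, comparison at the approximate level, closedness under weak limits) matches the paper's, and your tightness and comparison steps are fine. The genuine gap is in your ``Main obstacle'' paragraph, where the argument you sketch does not work and misses the key idea.

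You propose to control the contribution of a neighbourhood of $\{\tilde r_s=0\}$ to the quadratic variation by using Assumption~\ref{ass:positiveAtZero} to obtain ``a quantitative bound on the occupation time of $[0,\epsilon]$'' for the approximations. But the whole point of the construction is that the limit is \emph{sticky}: the process $\tilde r$ spends positive Lebesgue time at $0$, and correspondingly $\tilde r^{n}$ spends a non-vanishing amount of time in $[0,1/n]$. The positive drift at $0$ does not prevent this; it merely balances the stickiness so that the occupation time is finite rather than zero. Consequently there is no uniform (in $n$ and $\epsilon$) bound of the type you need, and the It\^o--Tanaka computation for $(\tilde r^n_t-\epsilon)^+$ will produce a local-time term that does not vanish. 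More concretely, on the set $\{s:\tilde r_s=0\}$ one has $\tilde r^{n_k}_s\to 0$, but since $\vartheta^{n_k}\equiv 1$ on $[1/n_k,\infty)$ there is no reason why $\vartheta^{n_k}(\tilde r^{n_k}_s)\to 0$, so dominated convergence cannot close the argument either.

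The paper avoids this trap by working with the \emph{limit} process rather than the approximations. It first shows, using lower semicontinuity of $w\mapsto\int_0^t I(w_s>\epsilon)\,ds$ and Fatou, that $\bs M_t^2-4\int_0^t I(\bs r_u>0)\,du$ is a submartingale under the limit law. The trivial bound $\vartheta^n\le 1$ gives that $[\bs M]_t-4t$ is nonincreasing. The decisive step is then purely about the limit: the occupation-time formula yields $\int_0^t I(\bs r_u=0)\,d[\bs M]_u=0$, so $d[\bs M]$ is carried by $\{\bs r_u>0\}$, whence $[\bs M]_t-[\bs M]_s\le 4\int_s^t I(\bs r_u>0)\,du$ and the supermartingale inequality follows. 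This sandwich identifies $[\bs M]$ exactly; the cross-variation $[\bs M,\bs N]$ is handled the same way with Kunita--Watanabe replacing the occupation-time formula. Finally, the Brownian motion $\tilde W$ is produced not via a ``two-dimensional analogue of Lemma~\ref{lem:MartingaleProblem}'' but via a martingale representation theorem (Ikeda--Watanabe, Ch.~II, Theorem~7.1$'$), which is what turns the identified covariance structure $(4\int I(\bs r>0),\,4\int I(\bs s>0),\,4\int I(\bs r>0,\bs s>0))$ into a single driving Brownian motion.
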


\begin{proof}
We 
fix sequences of diffusion coefficients $(\vartheta^n)$ and initial conditions $(\tilde{\mu}^n)$ and $(\tilde{\nu}^n)$
satisfying Assumptions \ref{ass:initialConvergence} and \ref{ass:DiffusionCoefficientApproximation}.
\par\smallskip 
\emph{Tightness:}
We claim that the sequence $(\mathbb{P}^{n})_{n\in\naturals}$ of probability measures on $(\mathbb{W}\times\mathbb{W},
\mathcal{B}(\mathbb{W})\otimes\mathcal{B}(\mathbb{W}))$ is
tight. This can be shown by similar arguments as in \cite{MR2870529,MR3175790}, so we only explain briefly how to
adapt these arguments to our setting. At first, we observe that
a uniform Lyapunov condition holds for the Markov processes
$(\tilde{r}^n_t,\tilde{s}^n_t)$ defined by \eqref{eq:stickApprox1}.
Indeed, these processes solve a local martingale problem w.r.t.\ the
generators
\begin{equation}
\label{eq:Martpro}
\mathcal L_t^n\ =\ \beta(t,\cdot )\,\partial_r\, +\, \gamma (t,\cdot )\,
\partial_s\, +\, 2(\vartheta^n)^2\, (\partial^2_r+\partial^2_s)
\end{equation}
defined on smooth functions on $\mathbb R^2$. Let
$V(x):=1+\norm{x}^2$ for $x\in\reals^2$. Recall that the drift
coefficients in \eqref{eq:Martpro} do not depend on $n$ and that they
satisfy the linear growth Assumption \ref{ass:stickySdeNonExplosiveCondition}. Moreover,
the diffusion coefficients are uniformly bounded by one. It follows that there
is a constant $\lambda\in(0,\infty)$, not depending on $n$, such that $\generator^n_t V\leq \lambda V$ for any $n\in\mathbb N$. From this
one can conclude that for each finite time interval $[0,T]$ and every
$\epsilon >0$, there is a compact set $K\subseteq\mathbb R^2$ such that for any $n\in\mathbb N$,
$P[(\tilde{r}^n_t,\tilde{s}^n_t)\in K\mbox{ for }t\le T]\ge 1-\epsilon$. Moreover, the drift and
diffusion coefficients are uniformly bounded on the set $K$.  Combining these
arguments, we can conclude tightness of the laws on $\mathbb W\times
\mathbb W$. We refer to \cite{MR2870529,MR3175790} for a detailled proof in a similar setting. By Prokhorov's Theorem, 
we can conclude that there is a subsequence $n_k\rightarrow\infty$ 
and a probability measure $\mathbb{P}$ on $\mathbb W\times\mathbb W$ such that $\mathbb{P}^{n_k}\rightarrow \mathbb{P}$ weakly. To simplify notation we will write in the following
$n$ instead of $n_k$, keeping in mind that we have convergence only along a
subsequence.
\par\smallskip
\emph{Identification of the limit:}
We now characterize the measure $\mathbb{P}$. In principle, we follow well-known strategies for identifying limits
of semimartingales, cf.\ \cite{MR2190038,MR1943877,MR838085}.
However, we can not apply those results directly, because the diffusion coefficients in \eqref{eq:thmApproximation}
are discontinuous.
\par\smallskip
We know that
$\mathbb{P}\circ (\bs{r}_0,\bs{s}_0)^{-1}=\mu\otimes\nu$, since $\mathbb{P}^n\circ (\bs{r}_0,\bs{s}_0)^{-1}=\mu^n\otimes\nu^n$
converges weakly to $\mu\otimes\nu$ by assumption.
We define maps $\bs{M},\bs{N}:\mathbb{W}\times\mathbb{W}\rightarrow \mathbb{W}$ by
\begin{equation*}
\bs{M}_{t}=\bs{r}_t-\bs{r}_0-\int_0^{t} \beta(u,\bs{r}_u)\, du \quad\text{and}\quad \bs{N}_{t}=\bs{s}_t-\bs{s}_0-\int_0^{t} \gamma(u,\bs{s}_u)\, du.
\end{equation*}
We claim that
$(\bs{M}_{t}, \filt_t, \mathbb{P})$ and $(\bs{N}_{t}, \filt_t, \mathbb{P})$ are martingales w.r.t.\ the canonical filtration $\filt_t=\sigma((\bs{r}_u,\bs{s}_u)_{0\leq u\leq t})$.
Indeed, the mappings $\bs{M}$ and $\bs{N}$ are continuous
on $\mathbb{W}$, so by the continuous mapping theorem,
$\mathbb{P}^n\circ (\bs{r},\bs{s},\bs{M},\bs{N})^{-1}$ converges weakly to $\mathbb{P}\circ (\bs{r},\bs{s},\bs{M},\bs{N})^{-1}$.
Notice that for each $n\in\naturals$,  $(\bs{M}_t, \filt_t, \mathbb{P}^n)$
is a martingale. Moreover, for any fixed $t\geq 0$,
the family $(\bs{M}_t, \mathbb{P}^n)_{n\in\naturals}$ is uniformly integrable. 
Hence
$(\bs{M}_{t}, \filt_t, \mathbb{P})$ is a continuous martingale, cf.\ \cite[Chapter IX, Proposition 1.12]{MR1943877}. In particular,
the quadratic variation $(\quadV{\bs{M}}_t)$ exists $\mathbb P$-almost surely. 
Notice that, by \eqref{eq:stickApprox1}, $[\bs M]_t\le 4t$ $\mathbb P^n$-almost surely for every $n$. Thus
for any $t\geq 0$, 
\begin{align*}
	\mathbb{E}\left[\sup_{0\leq s \leq t} \norm{\bs{M}_s}^2\right]
	&\leq \liminf_{R\rightarrow\infty} \mathbb{E}\left[\sup_{0\leq s \leq t} \norm{\bs{M}_s}^2\wedge R\right]
	= \liminf_{R\rightarrow\infty} \lim_{n\rightarrow\infty} \mathbb{E}^{n}\left[\sup_{0\leq s \leq t} \norm{\bs{M}_s}^2\wedge R\right] \nonumber
	\\&\leq \liminf_{n\rightarrow\infty} \mathbb{E}^{n}\left[\sup_{0\leq s \leq t} \norm{\bs{M}_s}^2\right] \nonumber
	\ \leq\   4\ \liminf_{n\rightarrow\infty} \mathbb{E}^{n}\left[\;\quadV{\bs{M}}_t\;\right] \ \leq\ 16 \, t,\nonumber
\end{align*}
Hence, under 
$\mathbb{P}$, $(\bs{M}_{t})$ is a
square integrable martingale, and thus $(\bs{M}_{t}^2-\quadV{\bs{M}}_t)$
is a martingale, cf.\ \cite[Theorem 21.70]{MR3112259}. Similar statements hold for $(\bs{N}_{t})$.
\par\smallskip
As a next step, we compute the quadratic variations and covariations of
 $(\bs{M}_{t})$ and  $(\bs{N}_{t})$ under $\mathbb P$. 
Here we follow arguments from \cite{MR3325271}.
Similarly as above, the family  $(\bs{M}_t^2,\mathbb{P}^n)$ is uniformly integrable for any fixed $t\geq 0$,
i.e.,
\begin{eqnarray}\label{eq:unifo}
	\lim_{\delta\rightarrow\infty} \, \sup_{n\in\naturals} \,\mathbb{E}^n[\,\norm{\bs{M}_t}^2;\, \norm{\bs{M}_t}^2>\delta\,] &=& 0.
\end{eqnarray}
Indeed, by Burkholder's inequality, there is a constant $C\in(0,\infty)$
such that
\begin{eqnarray*}
	\mathbb{E}^n\left[\,\bs{M}_t^4\,\right]&\leq& C \ \mathbb{E}^n\left[\,\quadV{\bs{M}}^2_t\,\right] \; \leq \ 16\ C\ t^2\qquad\mbox{for any }n\in\mathbb N.
\end{eqnarray*}
Let $G:\mathbb{W}\rightarrow \reals_+$ be bounded, continuous and non-negative. Equation \eqref{eq:unifo} implies
$$
	 \lim_{\delta\rightarrow\infty}  \sup_{n\in\naturals} \mathbb{E}^n\left[ |G \bs{M}_t^2-G (\bs{M}_t^2\wedge \delta)|\right]
\ \leq\ \norm{G}_\infty \liminf_{\delta\rightarrow\infty}  \sup_{n\in\naturals} \mathbb{E}^n\left[\bs{M}_t^2; \bs{M}_t^2>\delta\right]\ =\ 0.
$$
Hence for any such $G$ and any $t\geq 0$,
\begin{eqnarray}\label{eqappro}
	\mathbb{E}\left[G\, \bs{M}_t^2\right] &=& \lim_{\delta\rightarrow\infty} \mathbb{E}\left[G\, (\bs{M}_t^2 \wedge \delta)\right]
	\;=\; \lim_{\delta\rightarrow\infty} \lim_{n\rightarrow\infty} \mathbb{E}^n\left[G\, (\bs{M}_t^2 \wedge \delta)\right]
	\\\nonumber&=&\lim_{n\rightarrow\infty} \lim_{\delta\rightarrow\infty}  \mathbb{E}^n\left[G\, (\bs{M}_t^2 \wedge \delta)\right]
	=\lim_{n\rightarrow\infty} \mathbb{E}^n\left[G\, \bs{M}_t^2\right].
\end{eqnarray}
We now show that
$(\bs{M}^2_t-4\int_0^t I(\bs{r}_u>0)\,du, \mathbb{P})$ is a submartingale.
Fix $0\leq s < t$. Then for any continuous, bounded  and $\filt_{s}$-measurable function $G:\mathbb{W}\rightarrow \reals_+$,
\begin{align}\label{eq2}
	\lim_{n\rightarrow\infty} \mathbb{E}^n\left[G\int_s^t 4\, \vartheta^n(\bs{r}_u)^2\, du\right]
	=\lim_{n\rightarrow\infty} \mathbb{E}^n\left[G\left(\bs{M}^2_t-\bs{M}^2_s\right)\right]
	=\mathbb{E}\left[G\left(\bs{M}^2_t-\bs{M}^2_s\right)\right].
\end{align}
 On the other hand, the map $w\mapsto \int_0^{\cdot }\, I(w_s>\epsilon) \,ds$ from $\mathbb{W}$ to $\mathbb{W}$ is lower semicontinuous
 for any $\epsilon\geq 0$. Fatou's lemma and the Portemanteau theorem imply
 \begin{eqnarray}\label{eq1}
 	\mathbb{E}\left[G\, \int_s^t I(\bs{r}_u>0 )\, du \right] &\leq& \liminf_{\epsilon\downarrow 0} \mathbb{E}\left[G\, \int_s^t I(\bs{r}_u
 	>\epsilon )\, du \right]
 	\\\nonumber &\leq& \liminf_{\epsilon\downarrow 0}\, \liminf_{n\rightarrow\infty}  \mathbb{E}^n\left[G\, \int_s^t I(\bs{r}_u>\epsilon )\, du \right].
 \end{eqnarray}
 Notice that for any fixed $\epsilon>0$,
\begin{equation} \label{eq3}
	\liminf_{n\rightarrow\infty} \mathbb{E}^n\left[G\,\left(\int_s^t \vartheta^n(\bs{r}_u)^2\, du- \int_s^t I(\bs{r}_u>\epsilon )\, du\right) \right]\geq 0.
\end{equation}
By \eqref{eq2}, \eqref{eq1} and \eqref{eq3}, we have
$$
	\mathbb{E}\left[G\left(\bs{M}^2_t-\bs{M}^2_s-4 \int_s^t I(\bs{r}_u>0)\, du\right)\right]\ \geq\  0.
$$
Invoking a monotone class argument, cf.\ \cite[Theorem 8]{MR2273672}, we see that $(\bs{M}^2_t-4 \int_0^t I(\bs{r}_s>0)\,ds, \filt_t, \mathbb{P})$ is indeed a submartingale.
We show that it is also a supermartingale and hence a martingale.
By \eqref{eqappro}, for any function $G$ as above,
\begin{eqnarray*}
	\mathbb{E}\left[G\left(\bs{M}^2_t-\bs{M}^2_s-4\,(t-s)\right)\right]
&=&\lim_{n\rightarrow\infty} \mathbb{E}^n\left[G\left(\bs{M}^2_t-\bs{M}^2_s-4\,(t-s)\right)\right] \ \leq \  0
\end{eqnarray*}
Hence, $\bs{M}_t^2-4\,t$ is a supermartingale under $\mathbb{P}$.
 The uniqueness of the Doob-Meyer decomposition \cite[Theorem 16]{MR2273672}
implies that the map $t\mapsto [\bs{M}]_t-4\,t$ is $\mathbb{P}$-almost surely decreasing.
Observe that
$(\bs{r}_t,\filt_t,\mathbb{P})$ is a continuous semimartingale with $\quadV{\bs{r}}=\quadV{\bs{M}}$. Hence
the It{\^o}-Tanaka formula
implies that $\mathbb{P}$-almost surely,
\begin{eqnarray}\label{d1}
		\int_0^t I(\bs{r}_u=0) d\quadV{\bs{M}}_u &=& \int_0^t I(\bs{r}_u=0)  d\quadV{\bs{r}}_u =\int_{-\infty}^{\infty} I(y=0) \ell_t^y(\bs{r})  dy = 0.
\end{eqnarray}
We  conclude that for any $0\leq s<t$,
$$
[\bs{M}]_t-[\bs{M}]_s\ =\	\int_s^t I(\bs{r}_u>0)\,d[\bs{M}]_u  \ \leq\ 4\int_s^t I(\bs{r}_u>0)\,du,
$$
and hence for any $\mathcal F_s$-measurable function $G\in C_b(\mathbb W)$,
\begin{eqnarray*}
&&\mathbb{E}\left[G\left(\bs{M}^2_t-\bs{M}^2_s-4 \int_s^t I(\bs{r}_u>0)\,du\right)\right]\leq 0.
\end{eqnarray*}
As above we conclude by a monotone class argument that $(\bs{M}_t^2-4 \int_0^t I(\bs{r}_u>0)\,du)$ is a supermartingale, and hence a martingale, i.e.,
\begin{equation}\label{eq:qvM}
[ \bs M]\ =\ 4\, \int_0^\cdot I(\bs r_u>0)\, du\qquad\mathbb P\mbox{-almost surely.}
\end{equation}
 Similarly, we can show that
 \begin{equation}\label{eq:qvN}
[ \bs N]\ =\ 4\, \int_0^\cdot I(\bs s_u>0)\, du\qquad\mathbb P\mbox{-almost surely.}
\end{equation}
Moreover, we claim that 
\begin{eqnarray}\label{c1}
	\coV{\bs{M}}{\bs{N}} & =& 4\int_0^\cdot I(\bs{r}_u>0,\bs{s}_u>0)\,du \qquad\mathbb{P}\mbox{-almost surely.}
\end{eqnarray}
The proof does not involve new arguments, so we just sketch the main steps:
With the same arguments as before,
one can conclude that
$$t\mapsto \bs{M}_t\bs{N}_t-4\int_0^t I(\bs{r}_u>0,\bs{s}_u>0)\,du$$ is a submartingale and that the map
$t\mapsto \bs{M}_t\bs{N}_t-4t\, $ is $\mathbb{P}$-almost surely decreasing.
Moreover, by \eqref{eq:qvM}, \eqref{eq:qvN}, and the Kunita-Watanabe inequality, we see that $\mathbb P${-a.s.},
$$\int_s^t I(\bs{r}_u=0 \text{ or } \bs{s}_u=0)\,d\coV{\bs{M}}{\bs{N}}_u\ =\ 0
\qquad\mbox{for }0\le s\le t,\qquad\mbox{and thus}
$$
\begin{eqnarray*}
	\coV{\bs{M}}{\bs{N}}_t-\coV{\bs{M}}{\bs{N}}_s& =& \int_s^t I(\bs{r}_u>0,\bs{s}_u>0)\,d\coV{\bs{M}}{\bs{N}}_u\\ & \le &  4 \int_s^t I(\bs{r}_u>0,\bs{s}_u>0)\,du \qquad\mbox{for }0\le s\le t.
\end{eqnarray*}
This completes the proof of \eqref{c1}.
Invoking a martingale representation theorem, see e.g.\ \cite[Ch.\ II, Theorem 7.1']{MR1011252}, we conclude that
there is a probability space $(\Omega, \mathcal{A}, P)$ supporting a Brownian motion $\tilde{W}$, and random variables
$(\tilde{r},\tilde{s})$ such that $P\circ (\tilde{r},\tilde{s})^{-1}=\mathbb{P}\circ(\bs{r},\bs{s})^{-1}$,  and such that
$(\tilde{r}_t,\tilde{s}_t,\tilde{W}_t)$ is a weak solution of \eqref{eq:thmApproximation}.\smallskip

It remains to show that \eqref{m1} and \eqref{m2} imply $P[\tilde{r}_t \leq \tilde{s}_t \text{ for all } t\geq 0]=1$.
Applying a comparison theorem \cite[Theorem 1]{MR0471082} to the approximating diffusions \eqref{eq:stickApprox1} shows that
$\mathbb{P}^n[\bs{r}_t\leq \bs{s}_t \text{ for all } t\geq 0]=1$ for all $n$. The monotonicity carries over to the limit, since
$\mathbb{P}^n\circ(\bs{r},\bs{s})^{-1}$ converges weakly, along a subsequence, towards $\mathbb{P}\circ(\bs{r},\bs{s})^{-1}$.
\end{proof}

\subsection{Long time behavior}
We now derive bounds for solutions to \eqref{eq:oneDimSticky} that are stable for long times. We assume that $t\mapsto\alpha(t,x)$ is non-increasing, so that the \emph{stickiness} of solutions
to \eqref{eq:oneDimSticky}
is non-decreasing in time.
\begin{ass}\label{ass:summaryTimeDep}
	The function $\alpha:[0,\infty )\times [0,\infty ) \rightarrow\reals$ is
	locally Lipschitz continuous with $\alpha(t,x)\leq \alpha(s,x)$ for any $s\leq t$ and $x\in\reals_+$, $\alpha(t,0)>0$ for any
	$t\ge 0$,  and
	\begin{eqnarray}\label{ass:convexityAtInfinity2}
			\limsup_{r\rightarrow\infty}\,  (r^{-1}{\alpha(0,r)}) < 0.
	\end{eqnarray}
\end{ass}
Notice that Assumption \ref{ass:summaryTimeDep} implies Assumptions  \ref{ass:positiveAtZero}, \ref{ass:stickySdeDriftLocallyLipschitzAndPositive} and \ref{ass:stickySdeNonExplosiveCondition} from above.

\subsubsection{Invariant measure in the time-homogenous case}
We first consider
drift coefficients which do not depend on time, i.e., functions of the form $\alpha(t,x)=\alpha(x)$.

\begin{lem}\label{invariantmeasure}
Suppose that Assumption 
\ref{ass:summaryTimeDep} holds true, and $\alpha (t,\cdot )=\alpha$ for a  function $\alpha:[0,\infty )\rightarrow\reals$. Let $\pi$ be the probability measure on
$[0,\infty )$ defined by
	\begin{eqnarray}\label{eq:pi}
	 \pi(dx) &=& \frac 1Z\, \left(\frac{2}{\alpha(0)}\ \delta_0(dx) \ +\ \exp\left(\frac 12\int_0^x \alpha(y)\, dy\right)\lambda_{(0,\infty)}(dx)\right) \end{eqnarray}
where $Z = \frac{2}{\alpha(0)} + \int_0^\infty \exp\left(\frac 12\int_0^x\alpha(y)\, dy\right) dx $. Then $\pi$ is invariant for 
	\eqref{eq:oneDimSticky}, i.e., if $(r_t)$ is a solution
	with initial law $\pi$, then  $\operatorname{Law}(r_t)=\pi$ for any $t\geq 0$.
\end{lem}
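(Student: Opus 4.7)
My plan is to verify the stationary identity $\int \generator f\, d\pi=0$ for a sufficiently rich class of test functions $f$, and then to upgrade it to invariance using well-posedness of the martingale problem (Lemma \ref{lem:uniqueness}). Here $\generator$ denotes the generator read off from It{\^o}'s formula applied to \eqref{eq:oneDimSticky} in the time-homogeneous case, namely $\generator f(x)=\alpha(x)\,f'(x)+2\,I(x>0)\,f''(x)$. I would test against $f\in C_c^2([0,\infty))$, extended to $\reals$ as in Section \ref{sec:uniqueness}; this class is measure-determining and a core for the martingale problem under Assumption \ref{ass:summaryTimeDep}.

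The content of the computation is one integration by parts. Put $\rho(x)=\exp\left(\tfrac 12\int_0^x \alpha(y)\,dy\right)$, so that $\alpha\rho=2\rho'$ on $(0,\infty)$. Splitting $\int \generator f\,d\pi$ into the $\delta_0$-contribution
\begin{equation*}
\tfrac{2}{Z\,\alpha(0)}\cdot \alpha(0)\,f'(0)\;=\;\tfrac{2}{Z}\,f'(0)
\end{equation*}
and the absolutely continuous contribution $\tfrac{1}{Z}\int_0^\infty(\alpha f'+2f'')\,\rho\,dx$, I would rewrite $\alpha\rho f'=2\rho' f'$ and integrate by parts. The boundary term at $+\infty$ vanishes thanks to the exponential decay of $\rho$ coming from \eqref{ass:convexityAtInfinity2} and the compact support of $f$; the boundary term at $0$ produces $-2f'(0)$, which cancels the atomic contribution $2f'(0)$; and the remaining interior term $-2\int_0^\infty \rho f''\,dx$ cancels the $2f''\rho$ summand. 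Hence $\int \generator f\,d\pi=0$.

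To pass from this stationary identity to actual invariance, I would use the well-posed Markov family provided by Lemma \ref{lem:uniqueness}. For a solution $(r_t)$ with $r_0\sim\pi$, writing $\mu_t=\law{r_t}$, the martingale property yields
\begin{equation*}
\int f\,d\mu_t-\int f\,d\pi\;=\;\int_0^t\!\int \generator f\,d\mu_s\,ds\qquad\text{for all }f\in C_c^2([0,\infty)),
\end{equation*}
so $(\mu_t)$ is a measure-valued solution of the forward equation with $\mu_0=\pi$. By an Echeverr\'ia--Weiss type argument (applicable because the martingale problem is well-posed and $C_c^2([0,\infty))$ is a core), the only such solution is $\mu_t\equiv\pi$, which gives $\law{r_t}=\pi$ for every $t\ge 0$. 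The main obstacle I anticipate is the bookkeeping at infinity: finiteness of $Z$ and the vanishing of the boundary term at $+\infty$ in the integration by parts both rest on the exponential decay of $\rho$, itself forced by \eqref{ass:convexityAtInfinity2}; everything else is dictated by $\alpha\rho=2\rho'$, which fixes both the density on $(0,\infty)$ and, via the boundary term at $0$, the atomic mass $\tfrac{2}{\alpha(0)}$.
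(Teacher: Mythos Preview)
Your computation of $\int \generator f\,d\pi=0$ is correct and transparent; the integration by parts with $\alpha\rho=2\rho'$ is exactly what pins down both the density on $(0,\infty)$ and the size of the atom at $0$. The upgrade to invariance via an Echeverr\'ia--Weiss type theorem together with Lemma~\ref{lem:uniqueness} is a legitimate route, though you should be aware that the hypotheses of that theorem (positive maximum principle, density of the range of $\lambda-\generator$, and that your class $C_c^2$ is indeed a core for the sticky generator) are not entirely free and would need to be checked rather than asserted.

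The paper takes a different route. Instead of verifying the stationary identity directly, it exploits the approximation machinery already built in Theorem~\ref{thmApproximation}: for each $n$ the non-degenerate one-dimensional diffusion $d\tilde r_t^n=\alpha(\tilde r_t^n)\,dt+2\vartheta^n(\tilde r_t^n)\,d\tilde W_t$ has an explicit invariant density by classical speed-measure theory; the authors then show that these invariant measures $\tilde\mu^n$ converge weakly to $\pi$ (the atom at $0$ emerges from the blow-up of $1/\vartheta^n(y)^2$ near $0$), and conclude by passing stationarity through the weak limit provided by Theorem~\ref{thmApproximation} and uniqueness in law. This argument is longer but stays entirely within the approximation framework the paper has already set up and reuses later, and it avoids having to verify the technical hypotheses of Echeverr\'ia's theorem for a degenerate, boundary-sticky generator. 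Your approach, by contrast, is shorter and makes the structure of $\pi$ (why the atom has mass $2/\alpha(0)$) completely explicit from the boundary term, at the cost of invoking an external existence result whose applicability in this setting requires some care.
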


\begin{proof}
We use an approximation as in \eqref{eq:stickApprox1} with $\beta (t,x)=\alpha (x)$ and 
a sequence of smooth functions $\vartheta^n:[0,\infty )\to [0,1]$
satisfying Assumption \ref{ass:DiffusionCoefficientApproximation}. 
It is well-known that under our assumptions, for each $n\in\mathbb N$, the probability measure $\tilde{\mu}^n$ on $\mathbb R_+$ with
distribution function
	\begin{eqnarray*}
		\tilde{F}^n(x) =
		\frac{\int_0^x \frac{1}{\vartheta^n(y)^2}\exp\left(\int_{1/n}^y \frac{\alpha(z)}{2\vartheta^n(z)^2} dz\right) dy}
		{\int_0^\infty \frac{1}{\vartheta^n(y)^2} \exp\left(\int_{1/n}^y \frac{\alpha(z)}{2\vartheta^n(z)^2} dz\right) dy} \qquad x\in [0,\infty ),
	\end{eqnarray*}
	is an invariant measure for the process $(\tilde r_t^n)$  
	defined by \eqref{eq:stickApprox1}, see e.g.\ \cite[Chapter 4.4,
	Theorem 7]{MR0247667}.
	Note in particular that by Assumptions \ref{ass:summaryTimeDep} and
	\ref{ass:DiffusionCoefficientApproximation}, the occurring integrals
	are well defined and finite. Let $F$ denote the distribution function of
	$\pi$. We show that for any $x>0$,
	$\tilde{F}^n(x)\rightarrow F(x)$ as $n\rightarrow\infty$, 
	which implies that $\tilde\mu_n\rightarrow \pi$ weakly. Indeed, 
	fix $x\in (0,\infty ]$. Then for $n>1/x$,
	\begin{eqnarray}\label{eq112}
		&&\int_0^x \frac{1}{\vartheta^n(y)^2}\exp\left(\int_{1/n}^y \frac{\alpha(z)}{2\vartheta^n(z)^2} dz\right) dy \\\nonumber
		&=& 	\int_{1/n}^x \exp\left(\int_{1/n}^y \frac 12\,\alpha(z)  dz\right) dy
		+\int_{0}^{1/n} \frac{1}{\vartheta^n(y)^2}\exp\left(\int_{1/n}^y \frac{\alpha(z)}{2\vartheta^n(z)^2} dz\right) dy.
	\end{eqnarray}
	If $C\in(0,\infty)$ is a constant then
	\begin{align}\nonumber
	\int_{0}^{1/n} \frac{1}{\vartheta^n(y)^2}\exp\left(\int_{1/n}^y \frac{C}{\vartheta^n(z)^2} dz\right) dy
		\, =\,  \lim_{\epsilon\downarrow 0} 		
		 \int_{\epsilon}^{1/n} \frac{1}{\vartheta^n(y)^2}\exp\left(\int_{1/n}^y 
		 \frac{C}{\vartheta^n(z)^2} dz\right) dy\end{align}
\begin{eqnarray}
\ =\	 \lim_{\epsilon\downarrow 0} \frac{1}{C} \left(1-\exp\left(-\int^{1/n}_\epsilon \frac{C}{\vartheta^n(z)^2} dz\right)\right)
		\ =\  \frac{1}{C}.\label{eq111}
\end{eqnarray}	
	For $0<y<1/n$, we have the bounds
	\begin{eqnarray*}
		\exp\left(\max_{u\in[0,1/n]}\alpha(u)\int_{1/n}^y \frac{1}{2\vartheta^n(z)^2} dz\right) &\leq&
		\exp\left(\int_{1/n}^y \frac{\alpha(z)}{2\vartheta^n(z)^2} dz\right)
		\\&\leq&  \exp\left(\min_{u\in[0,1/n]}\alpha(u)\int_{1/n}^y \frac{1}{2\vartheta^n(z)^2} dz\right).
	\end{eqnarray*}
	Using \eqref{eq112}, the continuity of $\alpha$, and \eqref{eq111}, we can conclude that as $n\rightarrow\infty$,
	\begin{eqnarray*}
		\int_0^x \frac{1}{\vartheta^n(y)^2}\exp\left(\int_{1/n}^y \frac{\alpha(z)}{2\vartheta^n(z)^2} dz\right) dy
		&\rightarrow& 	\int_{0}^x \exp\left(\int_{0}^y \frac 12\,\alpha(z)  dz\right) dy
		+\frac{2}{\alpha(0)}.
	\end{eqnarray*}
	Since this also holds for $x=\infty$, we see that $\tilde{F}^n(x)\rightarrow F(x)$ for any $x>0$, and hence $\tilde\mu_n\rightarrow \pi$ weakly. 
	Consequently,  by Lemma \ref{lem:uniqueness}
	and Theorem \ref{thmApproximation}, the laws of the solutions of \eqref{eq:stickApprox1} with initial distributions $\tilde{\mu}^n$ converge
	weakly to the law of the
	solution of \eqref{eq:oneDimSticky} with initial distribution $\pi$. 
  Since the approximating processes are stationary, the limit process is
  stationary, too. Hence $\pi$ is an invariant measure.	
\end{proof}

\subsubsection{Long time stability in the time-inhomogeneous case} 
 
Let $(r_t)$ be a solution of \eqref{eq:oneDimSticky} with an arbitrary but fixed initial distribution $\mu$ on $\reals_+$.
Our aim is to provide bounds on $P[r_t>0]$ and $E[r_t]$ for any fixed $t\geq 0$. To this end we fix a continuous function $a:[0,\infty)\to\reals$
such that 
	\begin{eqnarray}\label{ass:convexityAtInfinity}
	\alpha (0,x)& \le &  a(x)\,\mbox{ for any }x\in [0,\infty ),\mbox{ \ and \ }	
			\limsup_{r\rightarrow\infty}\,  (r^{-1}{a(r)}) \ <\ 0.
	\end{eqnarray}
For example, by Assumption \ref{ass:summaryTimeDep}, we can always choose $a(x)=\alpha (0,x)$. However, sometimes it can be more convenient
	to choose the function $a$ in a different way.
Following \cite{MR2843007,Eberle2015} (see also \cite{MR972776,MR1345035,MR1401516,MR2152380}), we
define constants $R_0,R_1\in(0,\infty)$ and a concave function
$f:\reals_+\rightarrow\reals_+$ by
\begin{eqnarray}\label{R0}
	R_0 &=& \inf\{ R\geq 0 : a(r)\leq 0\quad\text{for any } r\geq R \},\\\label{R1}
	R_1 &=& \inf\{ R\geq R_0 : R(R-R_0)\;a(r)/r\leq -4 \quad\text{for any } r\geq R  \},\\
\label{f}
	f(r)&=&\int_0^r \phi(s)\ g(s)\ ds,\, \mbox{ where }\, \phi(r) \ =\ \exp\left(-\frac{1}{2} \int_0^r a(s)^+ ds\right)\ \mbox{and}
\end{eqnarray}
\begin{eqnarray*}
g(r) &=& 1- \frac{1}{4} \int_0^{r\wedge R_1} \frac{\Phi(s)}{\phi(s)}\, ds \left/ \int_0^{R_1} \frac{\Phi(s)}{\phi(s)}\, ds\right.
		- \frac{1}{4} \int_0^{r\wedge R_1} \frac{1}{\phi(s)}\, ds \left/ \int_0^{R_1} \frac{1}{\phi(s)}\, ds\right.
\end{eqnarray*}
with
$ \Phi(r) =  \int_0^r \phi(s) \, ds$. The function $f$ is concave, strictly increasing and continuous.
Observe that \eqref{ass:convexityAtInfinity} implies that
$0<R_0<R_1<\infty$. We define constants
\begin{align}\label{ceps}
	c \ =\ \left(2{\int_0^{R_1} \frac{\Phi(s)}{\phi(s)}\ ds}\right)^{-1}, \quad \epsilon \
	= \ \min\left\{\left(2\int_0^{R_1} \frac{1}{\phi(s)}\ ds\right)^{-1}, \ c\,\Phi(R_1) \right\}.
\end{align}
Notice that $1/2\leq g\leq 1$, and thus $\Phi(r)/2\leq f(r)\leq \Phi(r)$.
Hence for $0<r<R_1$,
\begin{eqnarray}\label{eqf}
	2\,f''(r)\ + \ f'(r)\,a(r)^+ &\le & -\epsilon \, -\, c\ \Phi(r)
	\ \leq\  -\left(\epsilon \, +\,  c\ f(r)\right).
\end{eqnarray}

\begin{lem}\label{eq:ergodicestimatesuntilT0}
Suppose that Assumption \ref{ass:summaryTimeDep}  holds true. Let $(r_t)$ be a solution of \eqref{eq:oneDimSticky}, 
and let $T_0=\inf\{t\geq 0: r_t=0\}$. Then for any $t>0$,
\begin{eqnarray}
	E[f(r_t)\,;\, t<T_0] &\leq & e^{-c\,t}\ E[f(r_0)], \qquad\qquad\text{and}\\
		P[t<T_0] &\leq & \frac{1}{\epsilon} \ \frac{c}{e^{c\,t}-1} \ E[f(r_0)] .
\end{eqnarray}
\end{lem}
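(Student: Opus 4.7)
The plan is to apply It\^o's formula to $e^{ct}f(r_t)$ up to the first hitting time $T_0$ of zero and exploit the Lyapunov-type inequality that the function $f$ and the constants $c,\epsilon$ are designed to satisfy. On the event $\{s<T_0\}$ the process stays strictly positive, so the sticky term in \eqref{eq:oneDimSticky} vanishes and $dr_s=\alpha(s,r_s)\,ds+2\,dW_s$. Since $f\in C^2$ on $(0,\infty)$ and $f'$ is bounded (by $\phi(0)=1$), It\^o's formula gives, up to $T_0$,
$$
d\bigl(e^{cs}f(r_s)\bigr)\ =\ e^{cs}\bigl[cf(r_s)+f'(r_s)\alpha(s,r_s)+2f''(r_s)\bigr]\,ds\ +\ 2e^{cs}f'(r_s)\,dW_s,
$$
with the stochastic integral being a true martingale on each bounded time interval.

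The key analytic step is to verify the Lyapunov-type estimate
$$
cf(r)+f'(r)\,\alpha(t,r)+2f''(r)\ \le\ -\epsilon \qquad \text{for all } r>0\text{ and } t\ge 0.
$$
For $r\in(0,R_1)$ this is immediate from \eqref{eqf} combined with the bounds $\alpha(t,r)\le \alpha(0,r)\le a(r)\le a(r)^+$ (using the monotonicity in $t$ in Assumption \ref{ass:summaryTimeDep} and the choice of $a$) and $f'\ge 0$. For $r\ge R_1$ one has $f''=0$ and $f'$ equal to the positive constant $\phi(R_1)/2$, while the defining property of $R_1$ in \eqref{R1} forces $\alpha(t,r)\le -4r/(R_1(R_1-R_0))$, and combining these with the specific form of $c$ and $\epsilon$ in \eqref{ceps} closes the outer-region estimate. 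Taking expectations in the It\^o decomposition up to the stopping time $t\wedge T_0$ and using the Lyapunov estimate then yields
$$
E\bigl[e^{c(t\wedge T_0)}f(r_{t\wedge T_0})\bigr]\ +\ \epsilon\, E\!\left[\int_0^{t\wedge T_0}e^{cs}\,ds\right]\ \le\ E[f(r_0)].
$$

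The first bound follows by dropping the nonnegative integral term and using that $f(0)=0$ forces $E[e^{c(t\wedge T_0)}f(r_{t\wedge T_0})]=e^{ct}E[f(r_t);\,t<T_0]$. For the second bound, drop the first term, evaluate $\int_0^{t\wedge T_0}e^{cs}\,ds=(e^{c(t\wedge T_0)}-1)/c$, and lower-bound the expectation by restricting to $\{t<T_0\}$, on which $e^{c(t\wedge T_0)}-1=e^{ct}-1$; this gives $(\epsilon/c)(e^{ct}-1)\,P[t<T_0]\le E[f(r_0)]$, which rearranges to the claimed inequality. The main obstacle is expected to be the outer-region case of the Lyapunov estimate, where $f$ is linear so concavity provides no help and the required decay has to be extracted entirely from the strong negativity of $\alpha$ encoded in \eqref{R1}, balanced carefully against the specific constants from \eqref{ceps}.
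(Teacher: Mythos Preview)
Your approach is essentially identical to the paper's: apply It\^o to $e^{cs}f(r_s)$ up to $T_0$, establish the Lyapunov bound $2f''+f'\alpha(t,\cdot)\le -\epsilon-cf$ on $(0,R_1)$ via \eqref{eqf} and on $[R_1,\infty)$ via the definition of $R_1$, then read off both inequalities. Two small corrections: first, $f$ is \emph{not} $C^2$ on $(0,\infty)$ since $f''=\phi' g+\phi g'$ has a jump at $R_1$ (where $g'$ jumps to zero), so you should invoke the It\^o--Tanaka formula for the concave function $f$ as the paper does rather than the classical It\^o formula; second, the constant value of $f'$ on $[R_1,\infty)$ is $\phi(R_0)/2$, which happens to equal your $\phi(R_1)/2$ only because $\phi$ is constant on $[R_0,\infty)$---writing it as $\phi(R_0)/2$ makes the outer-region chain of inequalities (comparing $r/R_1$ to $\Phi(r)/\Phi(R_1)$ via concavity of $\Phi$, then to $2c\Phi(r)$ via the definition of $c$) more transparent.
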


\begin{proof}
	Notice that the function $f$ can be extended to a concave function on
	$\reals$ by setting $f(x)=x$ for $x<0$. Since the process $(r_t)$ is a
	continuous semimartingale, we can apply the It{\^o}-Tanaka formula to conclude
	that almost surely,
\begin{eqnarray}\label{p1}
	df(r_t) &=& f'(r_t)\, \alpha(t,r_t) \ dt + 2\, f''(r_t)\, I(r_t>0) \ dt +  dM_t,
\end{eqnarray}
where $M_t=2 \int_0^t f'(r_s)\,I(r_s>0)\  dW_s$ is a martingale.
By  Assumption \ref{ass:summaryTimeDep} and \eqref{ass:convexityAtInfinity}, $\alpha (t,r_t)\le \alpha (0,r_t)\le a(r_t)$. Therefore, for
$0<r_t<R_1$, we can apply \eqref{eqf} to bound the right hand side of \eqref{p1}. On the other hand,
for $r_t\geq R_1$, we have $f''(r_t)=0$ and
$r_t^{-1}\alpha(r_t) < 0$. Moreover, by definition of $f$ and $\phi$, $f'(r_t)=\phi(R_0)/2$, and by
\eqref{R1}, $R_1(R_1-R_0) \alpha(r_t)/{r_t}^{-1}\leq -4$.
Therefore, we can conclude similarly to \cite[Proof of Theorem 2.2]{Eberle2015} that for $r_t>R_1$,
\begin{eqnarray}\nonumber
	f'(r_t)\, \alpha(t,r_t) &\leq& \phi(R_0) a(r_t) /2
	\,\leq\, -2 \frac{\phi(R_0)}{R_1-R_0} \frac{r_t}{R_1} \,<\, -2
	\frac{\phi(R_0)}{R_1-R_0} \frac{\Phi(r_t)}{\Phi(R_1)} \\&\leq& 
	- \,\Phi(r_t) \left/ \int_{R_0}^{R_1} \Phi(s)\phi(s)^{-1}\, ds\right. \leq\ - 2\,c\, \Phi(r_t)\label{p2} \\ &\leq &  -c\,\Phi(R_1)  \ -\ c\,f(r_t)
	\ \leq \  -\left(\epsilon +\  c\, f(r_t) \right).  \nonumber
\end{eqnarray}
Here we have used that $\int_{R_0}^{R_1} \Phi(s)\phi(s)^{-1}\,ds\geq
(R_1-R_0)\Phi(R_1)\phi(R_0)^{-1}/2$.
Combining \eqref{p1}, \eqref{eqf} and \eqref{p2}, we see that  almost surely,
\begin{equation}
\label{p3}
	df(r_t)\ \le\ -\left(\epsilon +\  c\, f(r_t) \right) \ dt \ + \ dM_t\qquad\mbox{for }t<T_0.
\end{equation}
Using It\^o's product
rule and \eqref{p3}, we finally obtain
\begin{eqnarray*}
e^{ct}\, E[f(r_t);\, t<T_0] &\leq& E[f(r_0)]\ +\ E[e^{c (t\wedge T_0)} f(r_{t\wedge T_0})-f(r_0)]   \\
 &\leq& E[f(r_0)]\ -\ \frac{\epsilon}{c}\ \left(E\left[e^{c\,(t\wedge T_0)}\right]-1\right) , \quad\mbox{and}\\
 P[t<T_0] &\leq&  E\left[\frac{e^{c\,(t\wedge T_0)}-1}{e^{c\,t}-1}\right]\
		\ \leq \  \frac{1}{\epsilon} \ \frac{c}{e^{c\,t}-1} \ E[f(r_0)].
\end{eqnarray*}
\end{proof}

For $s\in [0,\infty )$, we denote by $\pi_s$ the invariant probability measure
for the {\em time-homogeneous} sticky diffusion with drift $\alpha (s,\cdot)$ that is given by \eqref{eq:pi}, i.e.,
 \begin{equation}\label{eq:pis}
	 \pi_{s}(dx) \ \propto\ \frac{2}{\alpha(s,0)}\ \delta_0(dx) \
	 +\ \exp\left(\frac{1}{2}\int_0^x \alpha(s,y)\,
	 dy\right)\lambda_{(0,\infty)}(dx).
\end{equation}
\begin{thm}\label{quantification1}
Suppose that Assumption \ref{ass:summaryTimeDep}  holds true, and let $(r_t)$ be a solution of \eqref{eq:oneDimSticky}
with initial distribution $\mu$ on $\reals_+$. 
Then
for any $t>0$,
\begin{eqnarray*}
		E[f(r_t)] &\leq & e^{-c\,t}\ E[f(r_0)] \ + \ \int f \, d\pi_0,\quad
		E[r_t] \ \leq\ 2\, \phi(R_0)^{-1} E[f(r_t)],\quad\mbox{and}\\
		P[r_t>0]&\leq & \frac{1}{\epsilon} \ \frac{c}{e^{c\,t}-1} \ E[f(r_0)] \ + \ \pi_0[(0,\infty)].
\end{eqnarray*}
\end{thm}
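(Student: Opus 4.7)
The plan is to split at the first hitting time $T_0=\inf\{t\ge 0:r_t=0\}$ and handle the two resulting pieces separately, combining Lemma \ref{eq:ergodicestimatesuntilT0} with a coupling to the stationary process from Lemma \ref{invariantmeasure}. The second bound $E[r_t]\le 2\phi(R_0)^{-1}E[f(r_t)]$ is immediate once one observes the pointwise inequality $r\le 2\phi(R_0)^{-1}f(r)$: since $\phi$ is non-increasing with $\phi(s)=\phi(R_0)$ for $s\ge R_0$ and $g\ge 1/2$, the integrand in the definition of $f$ is bounded below by $\phi(R_0)/2$, so $f(r)\ge \phi(R_0)\,r/2$.

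For the remaining two bounds, on the event $\{t<T_0\}$ Lemma \ref{eq:ergodicestimatesuntilT0} directly gives $E[f(r_t);t<T_0]\le e^{-ct}E[f(r_0)]$ and $P[t<T_0]\le \epsilon^{-1}c(e^{ct}-1)^{-1}E[f(r_0)]$. The core of the argument is therefore to show that on $\{t\ge T_0\}$ the contributions are controlled by $\int f\,d\pi_0$ and $\pi_0[(0,\infty)]$, respectively.

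To this end I would use the strong Markov property of the space-time process at $T_0$ (which is justified by uniqueness in law, Lemma \ref{lem:uniqueness}): conditionally on $\mathcal F_{T_0}$ and $\{T_0=\tau\}$, the process $(r_{\tau+u})_{u\ge 0}$ solves \eqref{eq:oneDimSticky} with drift $\alpha(\tau+\cdot,\cdot)$ starting at zero. Let $(\tilde r_u)_{u\ge 0}$ be the stationary sticky diffusion with drift $\alpha(0,\cdot)$ and initial law $\pi_0$, which exists by Lemma \ref{invariantmeasure}. Applying Theorem \ref{thmApproximation} with $\beta(u,x)=\alpha(\tau+u,x)$ and $\gamma(u,x)=\alpha(0,x)$, the drift inequality $\beta\le\gamma$ follows from the monotonicity in Assumption \ref{ass:summaryTimeDep} and the initial ordering $0\le\tilde r_0$ is trivial, so one obtains a synchronous coupling with $r_{\tau+u}\le \tilde r_u$ pathwise. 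Monotonicity of $f$ and of $s\mapsto I(s>0)$, together with stationarity of $\tilde r$, then yields $E[f(r_{\tau+u})]\le \int f\,d\pi_0$ and $P[r_{\tau+u}>0]\le \pi_0[(0,\infty)]$, uniformly in $\tau\ge 0$.

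Integrating these uniform conditional bounds against the law of $T_0$ restricted to $\{T_0\le t\}$ gives $E[f(r_t);t\ge T_0]\le \int f\,d\pi_0$ and $P[r_t>0,\,t\ge T_0]\le \pi_0[(0,\infty)]$, and adding the $\{t<T_0\}$ estimates completes the proof. The main obstacle I foresee is the measure-theoretic bookkeeping needed to apply Theorem \ref{thmApproximation} with the random shift $T_0$ inside a conditional expectation; this should reduce to carrying out the approximation and weak-limit construction of Section \ref{sec:stickyApproxAndExist} on a regular conditional probability given $\mathcal F_{T_0}$, using that the hypotheses on $\beta(u,x)=\alpha(\tau+u,x)$ hold uniformly in $\tau\ge 0$.
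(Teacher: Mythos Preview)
Your proposal is correct in outline, but it differs from the paper's argument in one structural way worth noting. You split at $T_0$ and invoke the strong Markov property at this random time, then for each frozen value $\tau$ construct a fresh comparison coupling with the time-shifted drift $\alpha(\tau+\cdot,\cdot)$. The paper instead builds, \emph{once and for all and independently of $(r_t)$}, a pair $(\tilde r_t,\tilde s_t)$ solving \eqref{eq:thmApproximation} with $\beta=\alpha$ (the original, unshifted drift), $\gamma=\alpha(0,\cdot)$, $\tilde r_0=0$, $\tilde s_0\sim\pi_0$, and $\tilde r_t\le\tilde s_t$. It then concatenates $r$ with $\tilde r$ at the first \emph{meeting} time $T=\inf\{t:r_t=\tilde r_t\}$, checks that the spliced process $\bar r$ solves the same martingale problem (hence has the same law as $r$ by Lemma~\ref{lem:uniqueness}), and observes $T\le T_0$.

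Both arguments ultimately rest on uniqueness in law for the martingale problem; in the paper this enters through the concatenation lemma, in yours through the strong Markov property. The paper's route sidesteps the random time-shift entirely: because $\tilde r$ runs with the same (unshifted) drift as $r$, the splicing is legitimate without conditioning on $T_0$, and the bookkeeping you flag as the ``main obstacle'' never arises. Your route is workable---the uniform-in-$\tau$ estimates you need do hold and the strong Markov property is available once uniqueness for the martingale problem is known---but it requires the regular-conditional-probability argument you describe, whereas the paper's version is a single deterministic construction.
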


\begin{proof}
Based on the results of Theorem \ref{thmApproximation}, we can construct a
filtered probability space $(\Omega, \mathcal{A}, (\filt_t), P)$
satisfying the usual conditions and supporting random variables 
$r,W,\tilde{r},\tilde{s},\tilde{W}:\Omega\rightarrow\mathbb{W}$ such that
w.r.t.\ $(\Omega, \mathcal{A}, (\filt_t), P)$,
\begin{itemize}
\item  $(r,W)$ and $(\tilde{r},\tilde{s},\tilde{W})$ are independent,
  \item $(r_t,W_t)$ is a weak solution of  \eqref{eq:oneDimSticky} with initial
  distribution $\mu$, \ and
  \item $(\tilde{r}_t,\tilde{s}_t,\tilde{W}_t)$ is a weak solution of \eqref{eq:thmApproximation}
with $\beta(t,x)=\alpha(t,x)$,
\-$\gamma(t,x)=\alpha(0,x)$, $\tilde{\mu}=\delta_0$, $\tilde{\nu}=\pi_0$, and
\begin{equation}
\label{eq:compare}
P[\ \tilde{r}_t \leq \tilde{s}_t \text{ for all }t\geq 0\ ]\ =\ 1.
\end{equation}
\end{itemize}
Let $T:=\inf\{t\geq 0: r_t=\tilde{r}_t\}$ be the first meeting time of $(r_t)$ and $(\tilde{r}_t)$.
 We define
 \begin{eqnarray}\label{coupling}
	\bar{r}_t &:=& r_t \quad \text{for } t<T,\quad\text{and}\quad \bar{r}_t \ := \ \tilde{r}_{t}  \quad \text{for } t\geq T.
\end{eqnarray}
Then $(\bar r_t)$ solves the martingale problem corresponding to \eqref{eq:oneDimSticky} with initial law $\mu$, cf.\ e.g.\ \cite[Section 3.1]{MR2227134}. By
Lemma \ref{lem:uniqueness}, this martingale problem has a unique solution.
Hence, we can conclude that the laws of $\bar{r}$ and $r$ on $\mathbb{W}$ coincide.
Let $T_0=\inf\{t\geq 0: r_t=0\}$. Observe that since 
$t\mapsto r_t$ and $t\mapsto\tilde{r}_t$ are continuous with $\tilde{r}_0=0 \le r_0$, we have $T\leq T_0$.
In particular, by Lemma \ref{eq:ergodicestimatesuntilT0},
\eqref{eq:compare}, and since $f$ is increasing, 
\begin{eqnarray*}
	E[f(r_t)] &=&  E[f(\bar{r}_t)] \ = \ E[f(r_t); \, t<T] \ + \  E[f(\tilde{r}_t);\, t\geq T]
	\\&\leq& E[f(r_t); \, t<T_0] \, + \, E[f(\tilde{s}_t)]
	\ \leq\  e^{-c\,t}\, E[f(r_0)] \, + \, \int f \, d\pi_0.
\end{eqnarray*}
Here we have used that by Lemma \ref{invariantmeasure}, the process
$(\tilde s_t)$ is stationary. By  \eqref{f}, \eqref{R0}, and since $g\ge 1/2$,
we have $f'\ge \phi (R_0)/2$. Hence
the inequality $r \leq 2\,\phi(R_0)^{-1} f(r)$
holds for any $r\geq 0$, and thus, we can conclude that $$E[r_t] \ \leq\
2\phi(R_0)^{-1} E[f(r_t)].$$ 
Finally, by the second part of Lemma
\ref{eq:ergodicestimatesuntilT0}, we see that
\begin{eqnarray*}
	P[r_t>0] &=& P[\bar{r}_t>0] \ = \  P[r_t>0,\, t<T] + P[\tilde{r}_t>0,\, t\geq T]
	\\&\leq& P[t<T_0] \ +\  P[\tilde{s}_t>0]  \ \leq \
	\frac{1}{\epsilon} \ \frac{c}{e^{c\,t}-1} \ E[f(r_0)] \ + \ \pi_0[(0,\infty)].
\end{eqnarray*}
\end{proof}

By applying Theorem \ref{quantification1} on the time intervals $[s,t]$ and
$[0,s]$, we obtain:

\begin{cor}\label{thmtimedependent}
Suppose that Assumption \ref{ass:summaryTimeDep}  holds true, and let $(r_t)$ be a solution of \eqref{eq:oneDimSticky}. 
Then
for any $0\le s< t$,
\begin{eqnarray*}
		E[f(r_t)] & \leq&   e^{-ct}E[f(r_0)] \,+\, e^{-c\, (t-s)} \int f\,d\pi_0\, +\,  \int f \,d\pi_{s},\qquad\mbox{and}\\
		P[r_t>0]&\leq & \frac{1}{\epsilon} \ \frac{c}{e^{c\,(t-s)}-1} \,\left( e^{-cs}E[f(r_0)] + \int f\,d\pi_0\right) \, + \, \pi_{s}[(0,\infty)].
\end{eqnarray*}
where $f$, $c$ and $\epsilon$ are defined as above. Furthermore,
\begin{eqnarray*}
		E[f_s(r_t)] & \leq& \frac{2}{\phi (R_0)}\, e^{-c_s (t-s)} \left(  e^{-cs}E[f(r_0)] +  \int f\,d\pi_0\right)\, +\,  \int f _s\,d\pi_{s},\qquad\mbox{and}\\
		P[r_t>0]&\leq &\frac{2}{\phi (R_0)\epsilon_s}\,  \ \frac{c_s}{e^{c_s\,(t-s)}-1} \,\left( e^{-cs}E[f(r_0)] + \int f\,d\pi_0\right) \, + \, \pi_{s}[(0,\infty)],
\end{eqnarray*}
where $f_s$, $c_s$ and $\epsilon_s$ are defined by \eqref{f}, \eqref{ceps}  and \eqref{eq:pi} with $a$ replaced by $\alpha (s,\cdot)$.
\end{cor}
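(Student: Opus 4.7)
The plan is to apply Theorem \ref{quantification1} twice, once on $[0,s]$ and once (after a time-shift) on $[s,t]$, and then to chain the resulting estimates.

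First, apply Theorem \ref{quantification1} directly on $[0,s]$ to obtain
$$ E[f(r_s)] \ \le \ e^{-cs}\,E[f(r_0)] \, + \, \int f\, d\pi_0. $$
Next, set $\tilde r_u := r_{s+u}$ for $u\ge 0$. Then $(\tilde r_u)$ is a weak solution of \eqref{eq:oneDimSticky} with drift $\tilde\alpha(u,x)=\alpha(s+u,x)$ and initial distribution $\operatorname{Law}(r_s)$. Since $\alpha(\cdot,x)$ is non-increasing, $\tilde\alpha$ inherits Assumption \ref{ass:summaryTimeDep}; moreover $\tilde\alpha(0,x)=\alpha(s,x)\le\alpha(0,x)\le a(x)$, so the same comparison function $a$ (and hence the same $f,c,\epsilon$) remains admissible. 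The time-homogeneous invariant measure associated with $\tilde\alpha(0,\cdot)=\alpha(s,\cdot)$ is, by definition, $\pi_s$. Applying Theorem \ref{quantification1} to $\tilde r$ on the interval $[0,t-s]$ yields
$$ E[f(r_t)] \,\le\, e^{-c(t-s)}E[f(r_s)] + \int f\, d\pi_s, \qquad P[r_t>0] \,\le\, \frac{1}{\epsilon}\,\frac{c}{e^{c(t-s)}-1}\,E[f(r_s)] + \pi_s[(0,\infty)]. $$
Substituting the bound on $E[f(r_s)]$ gives the first pair of estimates.

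For the sharper statement involving $f_s,c_s,\epsilon_s$, apply Theorem \ref{quantification1} to the shifted process $\tilde r$ using the tighter comparison function $a_s:=\alpha(s,\cdot)$ itself, which satisfies \eqref{ass:convexityAtInfinity} thanks to Assumption \ref{ass:summaryTimeDep}. This produces $f_s$, $c_s$, $\epsilon_s$ and the bounds
$$ E[f_s(r_t)] \,\le\, e^{-c_s(t-s)}E[f_s(r_s)] + \int f_s\, d\pi_s, \qquad P[r_t>0] \,\le\, \frac{1}{\epsilon_s}\,\frac{c_s}{e^{c_s(t-s)}-1}\,E[f_s(r_s)] + \pi_s[(0,\infty)]. $$
To re-express $E[f_s(r_s)]$ in terms of the quantities already controlled, note that $f_s$ is concave with $f_s'(0)=\phi_s(0)g_s(0)=1$, so $f_s(x)\le x$ for $x\ge 0$. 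On the other hand, the computation inside the proof of Theorem \ref{quantification1} gives $f'\ge \phi(R_0)/2$, hence $x\le (2/\phi(R_0))f(x)$. Chaining these,
$$ E[f_s(r_s)] \,\le\, E[r_s] \,\le\, \frac{2}{\phi(R_0)}\,E[f(r_s)] \,\le\, \frac{2}{\phi(R_0)}\Big(e^{-cs}E[f(r_0)]+\int f\, d\pi_0\Big), $$
and substituting this into the previous display produces the claimed second pair of bounds.

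The only point requiring care is verifying that Theorem \ref{quantification1} still applies to the time-shifted process $(\tilde r_u)$ with the chosen comparison function; this amounts to checking monotonicity and the growth condition \eqref{ass:convexityAtInfinity}, both of which follow immediately from the hypothesis that $t\mapsto \alpha(t,x)$ is non-increasing. Everything else is arithmetic chaining of the two applications.
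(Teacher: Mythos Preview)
Your proof is correct and follows essentially the same approach as the paper: shift the process by $s$, observe that Theorem \ref{quantification1} applies to the shifted process with either the original comparison function $a$ or the tighter choice $a_s=\alpha(s,\cdot)$, and then feed in the bound on $E[f(r_s)]$ obtained from a first application of Theorem \ref{quantification1} on $[0,s]$, using $f_s(x)\le x\le (2/\phi(R_0))f(x)$ to pass between $f_s$ and $f$. The paper's proof is slightly terser but records exactly the same four intermediate inequalities and the same chaining step.
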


\begin{proof}
Fix $s\in [0,\infty )$. Then the process $(r_{s+t})_{t\ge 0}$ solves \eqref{eq:oneDimSticky} with drift coefficient $\alpha_s(t,x)=\alpha (s+t,x)$
and initial distribution $P\circ r_s^{-1}$. Since $\alpha_s(t,x)\le \alpha (s,x)
\le a(x)$ for any $t,x\ge 0$, we can apply Theorem \ref{quantification1}
either with $a,f,c$ and $\epsilon$ as above, or with  $a,f,c$ and $\epsilon$
replaced by $\alpha (s,\cdot ),f_s,c_s$ and $\epsilon_s$. For $t>s$ we obtain
\begin{eqnarray*}
		E[f(r_t)] &\leq & e^{-c\,(t-s)}\ E[f(r_s)] \ + \ \int f \, d\pi_s,\\
		P[r_t>0]&\leq & \frac{1}{\epsilon} \ \frac{c}{e^{c\,(t-s)}-1} \ E[f(r_s)] \ + \ \pi_s[(0,\infty)],\\
		E[f_s(r_t)] &\leq & e^{-c_s\,(t-s)}\ E[f_s(r_s)] \ + \ \int f_s \, d\pi_s,\\
		P[r_t>0]&\leq & \frac{1}{\epsilon_s} \ \frac{c_s}{e^{c_s\,(t-s)}-1} \ E[f_s(r_s)] \ + \ \pi_s[(0,\infty)].
\end{eqnarray*}
Noting that $f_s(r_s)\le r_s$, the assertion follows by 
applying Theorem \ref{quantification1} once more.
\end{proof}

\section{Coupling construction and proofs of the main results} \label{sec:proofs}
In this section, we prove our main theorems. First of all, we construct the
sticky coupling $(X_t,Y_t)$ of solutions to \eqref{eq:DiffusionDifferentDrifts}
and \eqref{eq:DiffusionDifferentDrifts2} respectively, advertised in
Theorem \ref{main1}. The coupling is obtained as a weak limit of
Markovian couplings $(X^\delta_t,Y_t^\delta),\ {\delta>0}$. The couplings
$(X^\delta_t,Y_t^\delta)$  are reflection couplings for
$|X_t^\delta-Y^\delta_t|\geq \delta$ and synchronous couplings for
$|X_t^\delta-Y^\delta_t|=0$. Inbetween there is an interpolation between the two types of
couplings. We argue that the family of couplings is tight and
thus there is a subsequence converging to a coupling $(X_t,Y_t)_{t\ge 0}$. 
It is then argued that this limiting coupling is sticky and shares the
properties stated in Theorem \ref{main1}.
\par\medskip
We now define the couplings $(X^\delta_t,Y^\delta_t)$ 
rigorously. The technical realization follows
\cite{Eberle2015}.
We introduce Lipschitz functions
$\operatorname{rc}^\delta,\operatorname{sc}^\delta:\reals_+\rightarrow[0,1]$ such that
$\operatorname{rc}^\delta(0) = 0$, $ \operatorname{rc}^\delta(r)  > 0$ for 
$ 0<r< \delta$,
$\operatorname{rc}^\delta(r)  =  1$ for $ r\geq \delta$, and
   \begin{eqnarray}\label{r2+s2=1}
  \operatorname{rc}^\delta(r)^2+\operatorname{sc}^\delta(r)^2 & = & 1 \quad  \text{ for any } r\geq 0.
  \end{eqnarray}
  Let
  $(B_t^1)$ and $(B_t^2)$ be independent $d$-dimensional Brownian motions, and let $u\in\reals^d$ be some arbitrary
   unit vector. We define the coupling $(X_t^\delta,Y_t^\delta)$ for
   \eqref{eq:DiffusionDifferentDrifts} and \eqref{eq:DiffusionDifferentDrifts2}
   as a diffusion process in $\reals^{2d}$ satisfying the stochastic differential equation
\begin{alignat}{4}\label{coup1}
  dX_t^\delta &\ =\  b(t,X^\delta_t)\ dt \ +&\ \operatorname{rc}^\delta\left(\tilde{r}^\delta_t\right)\ dB^1_t
  	&\ + \ \operatorname{sc}^\delta\left(\tilde{r}^\delta_t\right)\ dB^2_t,
  	\\\label{coup2}
  	dY_t^\delta &\ = \  \tilde{b}(t,Y^\delta_t)\ dt \ +& \ 
  	\operatorname{rc}^\delta\left(\tilde{r}^\delta_t\right)\ \left(\operatorname{Id}_{\reals^d}-2\,e^\delta_t\sProd{e^\delta_t}{\cdot}\right)\ dB^1_t
  	 	&\ +\ \operatorname{sc}^\delta\left(\tilde{r}^\delta_t\right)\ dB^2_t,
  \end{alignat}
  with initial condition $(X^\delta_0,Y^\delta_0) =(x,y)$. Here $Z^\delta_t=X_t^\delta-Y_t^\delta$,
  $\tilde{r}^\delta_t=\norm{Z_t^\delta}$, $e^\delta_t=
  {Z^\delta_t}/{\tilde{r}^\delta_t}$ if $\tilde{r}^\delta_t\not=0$, and $e^\delta_t=u$ if
  $\tilde{r}^\delta_t=0$.
  		Since $\operatorname{rc}^\delta(0)=0$, the arbitrary value $u$ is not relevant for the dynamics.
 The process $(X_t^\delta,Y_t^\delta)$ can be realized as a standard diffusion process in $\reals^{2d}$ 
 with locally Lipschitz coefficients. Moreover, Assumptions
 \ref{ass:BoundOnDriftDifference} and \ref{ass:CurvControlForB} imply the
 non-explosiveness of the process. Using Lévy's characterization of Brownian
 motion and \eqref{r2+s2=1}, one can check that  $(X_t^\delta,Y_t^\delta)$ is indeed a coupling of solutions to Equations \eqref{eq:DiffusionDifferentDrifts} and \eqref{eq:DiffusionDifferentDrifts2}. Notice that the process $W^\delta_t=\int_0^t \sProd{e^\delta_s}{dB^1_s}$ is a one-dimensional Brownian motion.

\begin{lem}\label{lem7}
Suppose that Assumptions \ref{ass:BoundOnDriftDifference} and
\ref{ass:CurvControlForB} are satisfied. Then, almost surely,
  		\begin{alignat}{2}\label{lem7e1}
  	d\tilde{r}^\delta_t \ &= \  &\sProd{e^\delta_t}{b(t,X^\delta_t)-\tilde{b}(t,Y^\delta_t)}\ dt \ &+\ 2\, \operatorname{rc}^\delta\left(\tilde{r}^\delta_t\right) \ dW^\delta_t\phantom{.}
  	\\\label{lem7e2} &\leq \  &\left(\  M\ +\ \kappa(\tilde{r}^\delta_t)\,
  	\tilde{r}^\delta_t \ \right)\ dt \ &+\ 2\, \operatorname{rc}^\delta\left(\tilde{r}^\delta_t\right) \ dW^\delta_t.
  \end{alignat}
\end{lem}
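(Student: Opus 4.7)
The plan is to first compute $dZ_t^\delta$ where $Z_t^\delta=X_t^\delta-Y_t^\delta$, then to apply Itô's formula to $\tilde r_t^\delta=\|Z_t^\delta\|$, and finally to use Assumptions \ref{ass:BoundOnDriftDifference} and \ref{ass:CurvControlForB} to pass from the equality \eqref{lem7e1} to the inequality \eqref{lem7e2}.

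Subtracting \eqref{coup2} from \eqref{coup1}, the contribution of $B^2$ cancels outright, while the contribution of $B^1$ simplifies using the identity $I-(I-2e_t^\delta(e_t^\delta)^T)=2e_t^\delta(e_t^\delta)^T$. This yields
$$dZ_t^\delta\ =\ \left(b(t,X_t^\delta)-\tilde b(t,Y_t^\delta)\right)\,dt\ +\ 2\,\operatorname{rc}^\delta(\tilde r_t^\delta)\,e_t^\delta\,\sProd{e_t^\delta}{dB_t^1},$$
with quadratic covariation matrix $d\langle Z^\delta\rangle_t=4\,\operatorname{rc}^\delta(\tilde r_t^\delta)^2\,e_t^\delta(e_t^\delta)^T\,dt$. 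To derive an SDE for $\tilde r_t^\delta$, I would apply Itô's formula to the smooth approximation $f_\varepsilon(z)=\sqrt{\|z\|^2+\varepsilon^2}$ and then let $\varepsilon\downarrow 0$. The gradient is $\nabla f_\varepsilon(z)=z/\sqrt{\|z\|^2+\varepsilon^2}$, and the Hessian contracted against $e_t^\delta(e_t^\delta)^T$ uses the key identity $(I-e e^T)e=0$, so the entire second-order correction vanishes on $\{\tilde r_t^\delta>0\}$. On $\{\tilde r_t^\delta=0\}$ the diffusion coefficient $\operatorname{rc}^\delta(\tilde r_t^\delta)$ vanishes, so no local-time contribution is picked up in the limit, and the arbitrary convention $e_t^\delta=u$ on this set enters only into a drift integral that can be identified pointwise. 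The result is precisely \eqref{lem7e1}:
$$d\tilde r_t^\delta\ =\ \sProd{e_t^\delta}{b(t,X_t^\delta)-\tilde b(t,Y_t^\delta)}\,dt\ +\ 2\,\operatorname{rc}^\delta(\tilde r_t^\delta)\,dW_t^\delta.$$

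For the inequality \eqref{lem7e2}, I would split the drift coefficient as
$$\sProd{e_t^\delta}{b(t,X_t^\delta)-\tilde b(t,Y_t^\delta)}\ =\ \sProd{e_t^\delta}{b(t,X_t^\delta)-b(t,Y_t^\delta)}\ +\ \sProd{e_t^\delta}{b(t,Y_t^\delta)-\tilde b(t,Y_t^\delta)}.$$
The second term is bounded by $M$ via Cauchy–Schwarz and Assumption \ref{ass:BoundOnDriftDifference} (using $|e_t^\delta|=1$). For the first term, on $\{\tilde r_t^\delta>0\}$ I write $e_t^\delta=Z_t^\delta/\tilde r_t^\delta$ and invoke Assumption \ref{ass:CurvControlForB} to obtain $\tilde r_t^{\delta\,-1}\sProd{X_t^\delta-Y_t^\delta}{b(t,X_t^\delta)-b(t,Y_t^\delta)}\le\kappa(\tilde r_t^\delta)\,\tilde r_t^\delta$; on $\{\tilde r_t^\delta=0\}$ both sides are zero because $X_t^\delta=Y_t^\delta$. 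Combining the two bounds gives \eqref{lem7e2}.

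The main delicate point is the passage to $\tilde r_t^\delta=\|Z_t^\delta\|$ across the set where $Z_t^\delta=0$, where $\|\cdot\|$ is not smooth; a careful justification requires either Itô–Tanaka or the smoothing argument sketched above, together with the observation that $\operatorname{rc}^\delta$ vanishes at $0$ so that no local time is generated by the radial SDE. Everything else reduces to algebraic manipulations and direct application of the two standing assumptions.
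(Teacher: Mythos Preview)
Your approach is correct and essentially the same as the paper's. Both arguments regularize the non-smooth norm, apply It\^o's formula, and pass to the limit, relying on $\operatorname{rc}^\delta(0)=0$ to rule out any local-time contribution; the inequality step via the splitting $b(t,X)-\tilde b(t,Y)=(b(t,X)-b(t,Y))+(b(t,Y)-\tilde b(t,Y))$ is identical. The only technical difference is the choice of mollifier: the paper first computes $d(\tilde r_t^\delta)^2$ and then applies a $C^2$ approximation $S_\epsilon$ of $\sqrt{\cdot}$ on $\mathbb R_+$, whereas you smooth the norm directly on $\mathbb R^d$ via $f_\varepsilon(z)=\sqrt{|z|^2+\varepsilon^2}$. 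One small wording issue: for finite $\varepsilon$ the second-order term $e_t^{\delta\,T}\nabla^2 f_\varepsilon(Z_t^\delta)\,e_t^\delta=\varepsilon^2/(|Z_t^\delta|^2+\varepsilon^2)^{3/2}$ does not vanish on $\{\tilde r_t^\delta>0\}$; it only tends to zero as $\varepsilon\downarrow 0$, which is what you actually need.
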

  
\begin{proof}
By \eqref{coup1} and \eqref{coup2},
	\begin{align*}
		d(\tilde{r}^\delta_t)^2 =
	2 \sProd{Z^\delta_t}{b(t,X^\delta_t)-\tilde{b}(t,Y^\delta_t)} dt + 4
	\operatorname{rc}^\delta\left(\tilde{r}^\delta_t\right)^2 dt + 4
	\operatorname{rc}^\delta\left(\tilde{r}^\delta_t\right) \sProd{Z^\delta_t}{e_t^\delta}
	 dW^\delta_t.
	\end{align*}
	For $\epsilon>0$, we define a $C^2$
	approximation of the square root by
		\begin{equation}
 			S_\epsilon(r)\ =\ 
 		  -({1}/{8})\;\epsilon^{-3/2}\; r^2\, +\, ({3}/{4})\; \epsilon^{-1/2}\; r\,
 		  +\, ({3}/{8})\;\epsilon^{1/2} \qquad \mbox{ for } r<\epsilon,
 	\end{equation}
$S_\epsilon (r)=  \sqrt{r}	$ for $ r\geq \epsilon$. By It\^o's formula, 
 	\begin{align*}
 		dS_\epsilon((\tilde{r}^\delta_t)^2) &= 2\,S'_\epsilon((\tilde{r}^\delta_t)^2)
 		\sProd{Z^\delta_t}{b(t,X^\delta_t)-\tilde{b}(t,Y^\delta_t)}\ dt +
 		4 S'_\epsilon((\tilde{r}^\delta_t)^2) 
	\operatorname{rc}^\delta\left(\tilde{r}^\delta_t\right)^2 dt \\&+ 8\,S''_\epsilon((\tilde{r}^\delta_t)^2)
 		\operatorname{rc}^\delta\left(\tilde{r}^\delta_t\right)^2
 		(\tilde{r}^\delta_t)^2\,dt \,+\,
 		4\,S'_\epsilon((\tilde{r}^\delta_t)^2)\,
	\operatorname{rc}^\delta\left(\tilde{r}^\delta_t\right) \tilde{r}^\delta_t
	\, dW^\delta_t.
 	\end{align*}
	We can now pass to the limit $\epsilon\downarrow 0$ to obtain \eqref{lem7e1}.
	Notice that $\sup_{0\leq r\leq\epsilon} \norm{S'_\epsilon(r)}\lesssim
	\epsilon^{-1/2}$, $\sup_{0\leq r\leq\epsilon} \norm{S''_\epsilon(r)}\lesssim
	\epsilon^{-3/2}$ and that $\operatorname{rc}^\delta$ is Lipschitz with
	$\operatorname{rc}^\delta(0)=0$.
	Hence, one can use
	Lebesgue's dominated convergence theorem for the convergence of the first
	three integrals. Moreover, the 
	stochastic integral converges almost surely, along a subsequence, to $\int_0^t
	2\, \operatorname{rc}^\delta\left(\tilde{r}^\delta_s\right) \, dW^\delta_s$.	
Finally, by Assumptions \ref{ass:BoundOnDriftDifference} and
\ref{ass:CurvControlForB},
\begin{eqnarray*}
	\sProd{Z^\delta_t}{b(t,X^\delta_t)-\tilde{b}(t,Y^\delta_t)} &\leq& \sProd{Z^\delta_t}{b(t,X^\delta_t)-b(t,Y_t^\delta)+b(t,Y_t^\delta)-\tilde{b}(t,Y^\delta_t)}
	\\&\leq& M \ \tilde{r}^\delta_t \ +\  \kappa(\tilde{r}^\delta_t) \
	(\tilde{r}^\delta_t)^2.
\end{eqnarray*}
\end{proof}

In order to control the distance of $X^\delta_t$ and $Y^\delta_t$, we introduce a one-dimensional process $(r^\delta_t)$ that is defined as the unique and
strong solution to the equation
\begin{eqnarray} \label{eq:oneDimContr}
 	dr_t^\delta &=& \left(\  M\ +\ \kappa(r^\delta_t)\cdot r^\delta_t\ \right)\ dt
 	\ +\ 2\, \operatorname{rc}^\delta\left(r^\delta_t\right) \ dW^\delta_t, \quad r_0^\delta=\tilde{r}^\delta_0,
\end{eqnarray} 
 with $(\tilde{r}^\delta_t)$ and $(W^\delta_t)$ as above. 
  
 \begin{lem}\label{lemmon1} We have 
 	 $\norm{X_t^\delta-Y_t^\delta}= \tilde{r}^\delta_t \leq r_t^\delta$, almost
 	 surely for all $t\geq 0$.
 \end{lem}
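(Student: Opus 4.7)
The plan is to prove the bound $\tilde{r}_t^\delta\le r_t^\delta$ by applying a standard one-dimensional comparison theorem for SDEs driven by the same Brownian motion. Lemma \ref{lem7} already shows that $(\tilde{r}_t^\delta)$ is a continuous semimartingale driven by $(W_t^\delta)$ whose diffusion coefficient equals $2\operatorname{rc}^\delta(\tilde{r}_t^\delta)$ and whose drift is bounded above by $M+\kappa(\tilde{r}_t^\delta)\tilde{r}_t^\delta$. On the other hand, $(r_t^\delta)$ solves \eqref{eq:oneDimContr} with the same driving Brownian motion, the same functional form of the diffusion coefficient, and drift exactly equal to $M+\kappa(r_t^\delta)r_t^\delta$. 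Since $\tilde{r}_0^\delta=r_0^\delta$ by definition, the situation is precisely the one covered by the Yamada-Watanabe comparison theorem (the same reference \cite{MR0471082} already used in the proof of Theorem \ref{thmApproximation}), so the conclusion follows immediately.

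To make this self-contained, I would carry out the argument directly. Set $D_t=\tilde{r}_t^\delta-r_t^\delta$ and localize by $T_n=\inf\{t\ge 0:\tilde{r}_t^\delta\vee r_t^\delta\ge n\}$; by Assumptions \ref{ass:BoundOnDriftDifference} and \ref{ass:CurvControlForB} (and the SDE for $(r_t^\delta)$), $T_n\uparrow\infty$ a.s. Using Lemma \ref{lem7} and \eqref{eq:oneDimContr}, the difference $D$ is a continuous semimartingale whose martingale part has quadratic variation
$$d[D]_t\ \le\ 4\,\bigl(\operatorname{rc}^\delta(\tilde{r}_t^\delta)-\operatorname{rc}^\delta(r_t^\delta)\bigr)^2\, dt\ \le\ 4L^2\, D_t^2\, dt,$$
where $L$ is the Lipschitz constant of $\operatorname{rc}^\delta$. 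The Yamada-Watanabe trick then gives
$$\ell_t^0(D)\ =\ \lim_{\epsilon\downarrow 0}\frac{1}{\epsilon}\int_0^t I(0\le D_s\le\epsilon)\, d[D]_s\ \le\ \lim_{\epsilon\downarrow 0}\frac{4L^2\epsilon^2\, t}{\epsilon}\ =\ 0.$$
Tanaka's formula reduces to $(D_t)^+=\int_0^t I(D_s>0)\, dD_s$ (using $D_0^+=0$). Taking expectations up to $t\wedge T_n$, the martingale part vanishes (its integrand is bounded), and Lipschitz continuity of $\kappa$ on $[0,n]$ yields
$$E\bigl[(D_{t\wedge T_n})^+\bigr]\ \le\ C_n\int_0^t E\bigl[(D_{s\wedge T_n})^+\bigr]\, ds.$$
Gr\"onwall's lemma forces $E[(D_{t\wedge T_n})^+]=0$, and letting $n\to\infty$ gives $D_t\le 0$ a.s.

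The main obstacle is making the local time at zero of $D$ vanish, which is the essential ingredient distinguishing the Yamada-Watanabe-type comparison from more elementary arguments. This step relies crucially on the Lipschitz continuity of $\operatorname{rc}^\delta$ built into the construction of the coupling, which forces $d[D]_t$ to vanish like $D_t^2$ on $\{D_t=0\}$. The localization in $n$ is a minor nuisance needed because the map $x\mapsto\kappa(x)x$ is only locally Lipschitz, but non-explosiveness of both processes makes this harmless.
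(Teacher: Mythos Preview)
Your proposal is correct and takes essentially the same approach as the paper: invoke the Ikeda--Watanabe comparison theorem \cite[Theorem 1.1]{MR0471082} for one-dimensional diffusions driven by the same Brownian motion, with identical diffusion coefficient $2\operatorname{rc}^\delta(\cdot)$, identical initial value, and ordered drifts (via Lemma~\ref{lem7}). The paper's proof is just that single citation; your additional self-contained Tanaka/Gr\"onwall argument is simply the standard proof of that comparison theorem and is carried out correctly.
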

 \begin{proof}
 The processes $(\tilde{r}^\delta_t)$
 	and $(r^\delta_t)$ are driven by the same noise, start at the same position,
and, by \eqref{lem7e2}, the drift of $(\tilde{r}^\delta_t)$ is smaller
 	or equal to the one of $(r^\delta_t)$.
Therefore, the assertion follows by Ikeda-Watanabe's comparison theorem for one-di\-mensional
 	diffusions, cf.\
 	\cite[Theorem 1.1]{MR0471082}.
 \end{proof}

\begin{proof}[Proof of Theorem \ref{main1}]
We consider the diffusion $U^\delta_t:=(X_t^\delta,Y_t^\delta,r_t^\delta)$ on $\reals^{2d+1}$. 
Let $\mathbb{P}^\delta$ denote the law of $U^\delta$ on the space ${C}(\reals_+,\reals^{2d+1})$. 
We define
$\boldsymbol{X},
\boldsymbol{Y}:{C}(\reals_+,\reals^{2d+1})\to {C}(\reals_+,\reals^{d})$
and
$\boldsymbol{r}:{C}(\reals_+,\reals^{2d+1})\to {C}(\reals_+,\reals)$ as the canonical projections onto the first $d$, the second $d$, and the last coordinate.

Notice that
in each of the equations \eqref{coup1}, \eqref{coup2} and
\eqref{eq:oneDimContr}, the drift coefficients do not depend on $\delta$ and the diffusion coefficients
are uniformly bounded.  Moreover, Assumptions \ref{ass:BoundOnDriftDifference}
and \ref{ass:CurvControlForB} imply that, similarly as in the proof of Theorem \ref{thmApproximation}, the diffusions
 $(U_t^\delta)$ satisfy uniformly a Lyapunov non-explosion criterion,  
  and the
 drift coefficients are uniformly bounded on compact sets. Therefore, the family 
 $(\mathbb{P}^\delta)$ is tight, cf.\ \cite{MR2870529,MR3175790}.
In particular, there is a sequence $\delta_n\downarrow 0$ such that
$(\mathbb{P}^{\delta_n})$ converges towards a measure $\mathbb{P}$
on $\mathbb{C}(\reals_+,\reals^{2d+1})$.
For each $\delta>0$, $(X^\delta_t)$ and $(Y^\delta_t)$ are solutions to 
\eqref{eq:DiffusionDifferentDrifts} and \eqref{eq:DiffusionDifferentDrifts2}
respectively. Since those solutions are unique in law, we know that
$\mathbb{P}^\delta\circ (X^\delta)^{-1} = \mathbb{P}\circ \boldsymbol{X}^{-1}$ and $\mathbb{P}^\delta\circ (Y^\delta)^{-1} = \mathbb{P}\circ \boldsymbol{Y}^{-1}$
	for any $\delta>0$. Hence, $ \mathbb{P}\circ (\boldsymbol{X},
	\boldsymbol{Y})^{-1}$ is a coupling of \eqref{eq:DiffusionDifferentDrifts} and
	\eqref{eq:DiffusionDifferentDrifts2}. 	
	Moreover, Lemma \ref{lem:uniqueness} and the proof of Theorem \ref{thmApproximation} reveal that, 
	after extending
	the underlying probability space, there is a Brownian motion $(\tilde{W}_t)$ such that  
	$(r_t,\tilde{W}_t)$ is a solution of \eqref{stickyappl}. The statement from
	Lemma \ref{lemmon1} carries over to the limiting processes, since such
	inequalities are preserved under weak convergence, and thus \eqref{monotonappl}
	holds.
	The inequality \eqref{tt2} is implied by Theorem \ref{quantification1} setting $\alpha(t,x)=a(x)=M\ +\ \kappa(x)\cdot x$.
\end{proof}

\begin{proof}[Proof of Lemma \ref{lem:example}]
By \eqref{pimain}, $\pi[(0,\infty)]=\frac{\alpha}{1+\alpha}$ with
 		\begin{eqnarray*}
 			\alpha&:=& \frac{M}{2}\, \int_0^\infty \exp\left(\frac
 			12\int_0^x(M+\kappa(y)\,y)\, dy\right) dx.
 		\end{eqnarray*}
 		In order to provide upper bounds on $\alpha$, we decompose $\alpha=M(a+b)/2$
 		with
 		\begin{eqnarray*}
 			a &=& \int_\mathcal R^\infty \exp\left(\frac
 			12\int_0^x(M+\kappa(y)\,y)\, dy\right)\,dx \qquad\text{and} \\
 			b & = &
 			\int^\mathcal R_0 \exp\left(\frac 12\int_0^x(M+\kappa(y)\,y)\,
 			dy\right)\,dx.
 		\end{eqnarray*}
 		By Condition \eqref{KLRbound}, we have 
 		\begin{eqnarray*}
 			\frac
 			12\int_0^x(M+\kappa(y)\,y)\, dy &=& \frac
 			12\int_0^{\mathcal R}(M+L\,y)\, dy \,+\,\frac
 			12\int_{\mathcal R}^x(M-K\,y)\, dy \\
 			&=& Mx/2-Kx^2/4+(L+K)\mathcal R^2/4
 			\\&=&-K(x-M/K)^2/4+M^2/(4K)+(L+K)\mathcal R^2/4
 		\end{eqnarray*}
 		for $x\geq \mathcal R$ and  
 		\begin{eqnarray*}
 		\frac
 			12\int_0^x(M+\kappa(y)\,y)\, dy &=& \frac
 			12\int_0^{x}(M+L\,y)\, dy\,=\, Mx/2+Lx^2/4
 		\end{eqnarray*}
 		for  $x\leq \mathcal R$. We obtain
 		\begin{eqnarray*}
 			a&=& \exp\left({M^2}/{(4K)}+(L+K)\mathcal R^2/{4}\right) \, 
 			\int_\mathcal R^\infty \exp\left(- K
 			\left(x-{M}/{K}\right)^2/{4}\right)\,dx\\
 			&=& \frac{\sqrt{2}}{\sqrt{K}} \exp\left({M^2}/{(4K)}+(L+K)\mathcal
 			R^2/{4}\right) \int_{(\mathcal R-M/K)\sqrt{K/2}}^\infty
 			\exp\left(-z^2/2\right)\,dz \quad\text{and}\\
 			b&=& \int_0^\mathcal R \exp\left(Mx/2+Lx^2/4\right)\, dx
 		\end{eqnarray*}
 		and give upper bounds for these quantities:
 		\begin{eqnarray}\label{B1}
 			b&\leq& \mathcal R\, \exp\left(M\mathcal R/2+L\mathcal
 			R^2/4\right)
 			\\
 			\label{B2}
 			b &=& \exp\left(M\mathcal R/2+L\mathcal R^2/4\right)\int_0^\mathcal R
 			\exp\left(M(\mathcal R-x)/2-L(\mathcal R^2-x^2)/4\right)\, dx
 			\\\nonumber &=&\exp\left(M\mathcal R/2+L\mathcal R^2/4\right) \int_0^\mathcal R
 			\exp\left(-{M}y/2-{L}y\left(2\mathcal R -y\right)/4\right)\, dy
 			\\ \nonumber &\leq& 
 			\exp\left(M\mathcal R/2+L\mathcal R^2/4\right) 
 			\int_0^\mathcal R
 			\exp\left(-{M}y/2-{L\mathcal R}y/4\right)\, dy 
 			\\ \nonumber &\leq&  \frac{1}{M/2+L\mathcal R/4} \exp\left(M\mathcal
 			R/2+L\mathcal R^2/4\right).
 		\end{eqnarray}
 		Combining \eqref{B1} and \eqref{B2}, we conclude that
 		\begin{eqnarray}\label{B}
 		 b &\leq& \frac{4\mathcal{R}}{\max(4,2M\mathcal{R}+L\mathcal R^2)}
 		 \exp\left(M\mathcal R/2+L\mathcal R^2/4\right).
 		\end{eqnarray}
 		We use the bound $\int_0^\infty e^{-z^2/2} dz \leq \sqrt{2\pi}$ to conclude
 		that
 		\begin{eqnarray}\label{A1}
 			a &\leq& 2 \sqrt{{\pi/K}}  \exp\left({M^2}/{(4K)}+(L+K)\mathcal
 			R^2/{4}\right)
 			\\\nonumber&=& 2 \sqrt{{\pi/K}}   \exp\left(K(R-M/K)^2/4\right)
 		\exp\left(M\mathcal R/2+L\mathcal R^2/4\right) 
 		 			\\\nonumber&\leq & 2 \sqrt{\frac{\pi e}{K} }\exp\left(M\mathcal
 		 			R/2+L\mathcal R^2/4\right)\quad\text{for }K(R-M/K)^2\leq 2.
 		\end{eqnarray}
 		On the other hand, $\int_y^\infty e^{-z^2/2}\, dz \leq e^{-y^2/2}/y$ for any
 		$y>0$ and thus
 		\begin{eqnarray}\label{A2}
 			a &\leq& \frac{2}{K} \frac{1}{\mathcal R-M/K}
 			\exp\left((-K(\mathcal
 			R-M/K)^2+{M^2}/{K}+(L+K)\mathcal
 			R^2)/{4}\right)\\\nonumber
 			&=& \frac{2}{\sqrt{K}} \frac{1}{\sqrt{K(R-M/K)^2}} \exp\left(M\mathcal
 		 			R/2+L\mathcal R^2/4\right)
 			\\&\leq& \frac{\sqrt{2}}{\sqrt{K}} \exp\left(M\mathcal
 		 			R/2+L\mathcal R^2/4\right)\nonumber
 		\end{eqnarray}
 		provided $\mathcal R\geq M/K$ and $K(R-M/K)^2\geq 2$. 
 		Combining \eqref{B}, \eqref{A1} and \eqref{A2}, we obtain in the case
 		$\mathcal{R}\geq M/K$ the bound
 		\begin{eqnarray*}
 		\alpha&=&M(a+b)/2 \\
 		&\leq& \left(\pi^{1/2}e^{1/2}K^{-1/2}+2 \mathcal R
\max (4,L\mathcal R^2+2M\mathcal R)^{-1}\right)\, M\, \exp \left(M\mathcal R/2+L\mathcal R^2/4 \right)
 		\end{eqnarray*}
 		In the case $\mathcal R\leq M/K$, \eqref{B} implies
 		\begin{eqnarray}\label{B3}
 			 b &\leq& \frac{4\mathcal{R}}{\max(4,2M\mathcal{R}+L\mathcal R^2)}
 		 \exp\left(M^2/(4K)+(L+K)\mathcal R^2/4\right).
 		\end{eqnarray}
 		Combining \eqref{B3} and \eqref{A1}, we can conclude for $\mathcal R\leq
 		M/K$ the bound
 		\begin{eqnarray*}
 		\alpha\leq
 		\left(\sqrt{\frac{\pi}{K}}+\frac{2\mathcal{R}}{\max(4,2M\mathcal{R}+L\mathcal
 		R^2)}\right)M \exp\left(\frac{M^2}{4K}+\frac{L+K}{4}\mathcal R^2\right).
 		\end{eqnarray*}
\end{proof}

\begin{proof}[Proof of Theorem \ref{mainMcKeanVlasov}]
 The proof is similar to the proof of Theorem \ref{main1}. We fix $x_0,y_0\in\reals^d$ and corresponding 
 drifts $b(t,x)=b^{x_0}(t,x)$ and $\tilde{b}(t,x)=b^{y_0}(t,x)$ as in \eqref{b1} and \eqref{b2} respectively. Moreover, we choose $\tau_0\in (0,\infty )$
such that \eqref{eqmckeanest} holds for $|\tau |\le \tau_0$. Since
$\vartheta$ is Lipschitz, we can conclude by \eqref{eqmckeanest} that
for any $x\in\reals^d$,
 \begin{eqnarray*}
 |b(t,x)-\tilde b(t,x)|& =& |\tau |\cdot	\norm{\int \vartheta(x,y) \mu_t^{x_0}(dy) - \int \vartheta(x,y) \mu_t^{y_0}(dy) } \\ &\leq&|\tau |L \ \wDist^1(\mu_t^{x_0},\mu_t^{y_0})\ \leq \  
 	L\ A\ e^{-\lambda \, t}\ \norm{x_0-y_0},
 \end{eqnarray*}
where $L$ is the corresponding Lipschitz constant. We can now repeat the 
procedure leading to the proof of Theorem \ref{main1}, replacing $M$ by 
 $\norm{\tau}LAe^{-\lambda \, t}\norm{x_0-y_0}$. In particular, we can conclude
 that there is a coupling $(X_t,Y_t)$ of \eqref{a1} and \eqref{a2} and a
 solution $(r_t,W_t)$ of \eqref{eq:oneDimSticky} with $r_0=|x_0-y_0|$ and drift
 \begin{eqnarray}\label{alphaexplicit}
 \alpha(t,x)=\norm{\tau}L A e^{-\lambda \, t}
 \norm{x_0-y_0} + \kappa(x)  x
 \end{eqnarray}
 such that $\norm{X_t-Y_t}\leq r_t$. Notice that
 Assumption \ref{ass:CurvControlForEta} implies Assumption
 \ref{ass:summaryTimeDep} for the drift $\alpha$.
 We now want to apply Corollary \ref{thmtimedependent}. First, we fix 
 the function $a$ in \eqref{ass:convexityAtInfinity} as $a(\cdot):=\alpha(0,\cdot)$.
 Applying Corollary \ref{thmtimedependent} now yields that for any $0\leq s<
 t$, 
 \begin{eqnarray*}
 		\|{\mu_t^{x_0}-\mu_t^{y_0}}\|_{TV} &\leq & \frac{1}{\epsilon} \
 		\frac{c}{e^{c\,(t-s)}-1} \,\left( e^{-cs} f(\norm{x_0-y_0}) + \int
 		f\,d\pi_0\right) \, + \, \pi_{s}[(0,\infty)].
 \end{eqnarray*}
 By \eqref{eq:pis}, Assumption \ref{ass:summaryTimeDep}, and since 
$f(r)\leq r$, we have $f(|x_0-y_0|)\le |x_0-y_0|$ and $\int f\, d\pi_0<\infty$.
 Moreover, by \eqref{eq:pis}, \eqref{alphaexplicit} and  Assumption \ref{ass:summaryTimeDep},
 \begin{eqnarray*}
 \pi_{s}[(0,\infty)] &\le& \frac 12\,\alpha (s,0)\,\int_0^\infty 
 \exp\left(\frac{1}{2} \int_0^x
 \alpha(s,y)\, dy\right) dx\ \le\ C\, e^{-\lambda s}
 \end{eqnarray*}
 where $C:=\frac 12|\tau | LA\int_0^\infty 
 \exp\left(\frac{1}{2} \int_0^x
 \alpha(s,y)\, dy\right) dx$ \ is a finite constant.
Thus, there is a
 constant $A\in(0,\infty)$
 such that 
 \begin{eqnarray*}
 	\|{\mu_t^{x_0}-\mu_t^{y_0}}\|_{TV} &\leq&  \frac{A}{e^{c\,(t-s)}-1}  \ +
 	C  e^{-\lambda\, s} \, = \, e^{-c\,(t-s)} \frac{A}{1-e^{-c\,(t-s)}} + C
 	e^{-\lambda\, s}
\end{eqnarray*}
for any $0\leq s<t$. We can now set $s=t/2$ and use the
boundedness of $ \|\cdot \|_{TV}$ to see that there is a constant
$B\in(0,\infty)$ such that
 \begin{eqnarray*}
 	\|{\mu_t^{x_0}-\mu_t^{y_0}}\|_{TV} &\leq&  B\, \exp(-\min ({c},\lambda )\, t/2)
 	\qquad\text{for all }t \geq 0.
\end{eqnarray*}
It should be stressed, that the constants $B$ and ${c}$ depend on
the initial conditions.
 \end{proof}

\appendix
\section{Computations for Example \ref{exa:BM}}\label{appendixcalc}

In this appendix we prove lower bounds on the total variation distance between the 
probability measures $\nu (dx)=Z_f^{-1}f(x)\,dx$ and $\mu (dx)=Z_g^{-1}g(x)\,dx$ on $\mathbb R^1$ that have been considered in Example \ref{exa:BM}. Noticing that by symmetry of $f$,
		\begin{eqnarray*}
			Z_g &=& \int_{-\infty}^\infty g(x) \,dx \ =\ \int_{-\infty}^\infty e^{mx}
			f(x)\, dx \ =\ \int_0^\infty (e^{mx}+e^{-mx}) f(x) \, dx \\
			&\geq& 2
			\int_0^\infty f(x)\, dx \ =\ 
			\int_{-\infty}^\infty f(x)\, dx \ =\  Z_f, \qquad\mbox{ we obtain}
		\end{eqnarray*}
			\begin{eqnarray}\nonumber
			\lefteqn{\|\mu -\nu\|_{TV} \ =\ \int_{\mathbb R} \left( 1-{d\mu}/{d\nu}\right)^{+} \, d\nu \, =
			\, \int_{\mathbb R} \left(1- e^{mx}{Z_f}/{Z_g}\right)^{+} \, \nu(dx)}\\
		\nonumber
			 &\geq&  \int_{-\infty}^0 (1-e^{mx}) \,  \nu(dx) 
			\,=\, \int_{0}^\infty 
			\left(1-e^{-mx}\right)\, \nu(dx) \\
			\nonumber
			&=&  {\nu [(0,R)]}\,  \left.\int_0^R (1-e^{-mx})\, dx\right/ R \\ &&+ \, \nu[(R,\infty)]
			\int_R^\infty (1-e^{-mx}) e^{-{k}(x-R)^2/2} \, dx \left/ \int_R^\infty
			e^{-{k}(x-R)^2/2}\, dx \right.\label{appe2}\\\nonumber
			&=&\nu [(0,R)]\left(mR-1+e^{-mR}\right)/{(mR)}\\
			&&+\, \nu [(R,\infty)] \int_0^\infty (1-e^{-m(R+t)})
			e^{-{k}t^2/2}\, dt \left / \int_0^\infty e^{-{k}t^2/2}\, dt\right. .\nonumber
		\end{eqnarray}
		Using that $(e^{-x}-1+x)/x \leq 1-e^{-x}$ for any $x>0$, we obtain the lower
		bound
		\begin{eqnarray*}
			\|\mu -\nu\|_{TV} &\geq& {(e^{-mR}-1+mR)}/{(mR)}.
		\end{eqnarray*}
		We now derive an improved bound for small $k$. Suppose that $R\sqrt{k}\leq 1$. Then
		\begin{eqnarray*}
			{\nu [(R,\infty)]}/{\nu [(0,R)]}&=& \left.\int_0^\infty
			e^{-{k}t^2/2}\, dt\right/ R \ =\  \sqrt{\pi/(2k)}\, {R^{-1}} 
		\end{eqnarray*}
		implies
		$
			\nu [(R,\infty)] 
			\,=\, \frac{1}{2} \left(1\,+\, {R\sqrt{2 k/\pi}}\right)^{-1}\, \geq\, 
			\frac{1}{4}$.
		Hence by \eqref{appe2},
		\begin{eqnarray*}
			\|\mu -\nu\|_{TV} &\geq& \frac{1}{4} \int_0^\infty (1-e^{-m(R+t)}) e^{-{k}t^2/2}\, dt
			\left / \int_0^\infty e^{-{k}t^2/2}\, dt\right.
			\\&=& \frac{1}{4}
			\left(1-e^{-mR+m^2/(2k)}\left(1-\sqrt{{2}/{\pi}}\int_0^{m/\sqrt{k}}
			e^{-s^2/2}\, ds\right)\right)
			\\&\geq &\frac{1}{4} \left(1-e^{-mR+m^2/(2k)}+\sqrt{{2}/{(\pi k)}}{m}e^{-mR}\right).
		\end{eqnarray*}

 \bibliographystyle{plain}
 \bibliography{bib}
 
 \end{document}